\newcounter{algorithmenumi}
\newenvironment{algorithm}[1]
{\vskip0.3cm\noindent\textbf{Algorithm }\refstepcounter{algorithmenumi}\textbf{\arabic{section}}.\textbf{\arabic{algorithmenumi}}. $($#1$)$.}{\textbf{End.}}
\newtheorem{assumption}{Assumption A.\!}[section]
\newcommand{\aref}[1]{\textrm{\textsc{A.\ref{#1}}}}
\newcommand{\norm}[1]{\|#1\|}
\newcommand{\abs}[1]{|#1|}
\newcommand{\set}[1]{\left\{#1\right\}}
\newcommand{\Eproof}{\hfill$\square$}
\journalname{J. Optim. Theory Appl.}
\begin{document}

\title{Fast Inexact Decomposition Algorithms for Large-Scale Separable Convex Optimization}

\titlerunning{Fast Inexact Decomposition Algorithms For Large-Scale Separable Convex Optimization}

\author{Quoc Tran-Dinh \and Ion Necoara \and Moritz Diehl}


\institute{Quoc Tran Dinh \and Moritz Diehl are with Department of Electrical Engineering (ESAT-SCD) and Optimization in Engineering Center (OPTEC),
KU Leuven, Kasteelpark Arenberg 10, B-3001 Heverlee, Belgium.\\
\email{\texttt{\{quoc.trandinh, moritz.diehl\}@esat.kuleuven.be}}\\
Ion Necoara {is with the Automation and Systems Engineering Department, University Politehnica Bucharest, 060042 Bucharest, Romania;
\email{\texttt{ion.necoara@acse.pub.ro}}}\\
Quoc Tran Dinh {is with VNU University of Science, Hanoi, Vietnam. Currently, he is working as a postdoctoral researcher at Laboratory for Information and
Inference Systems, EPFL, 1015-Lausanne, Switzerland.}
}

\date{Received: date / Accepted: date}

\maketitle

\begin{abstract}
In this paper we propose a new inexact dual decomposition algorithm for solving separable convex optimization problems. This algorithm is a combination of three
techniques: dual Lagrangian decomposition, smoothing and excessive gap.  The algorithm requires only one primal step and two dual steps at each iteration and
allows one to solve the subproblem of each component inexactly and in parallel.
Moreover, the algorithmic parameters are updated automatically without any tuning strategy as in augmented Lagrangian approaches. We analyze the convergence of
the algorithm and estimate its $O\left(\frac{1}{\varepsilon}\right)$ worst-case complexity.
Numerical examples are implemented to verify the theoretical results.

\keywords{Smoothing technique \and  excessive gap \and Lagrangian decomposition \and inexact first order method \and separable convex optimization \and
distributed and parallel algorithm}
\end{abstract}

\section{Introduction}
Many practical optimization problems must be addressed  within the framework of large-scale structured convex optimization and need to
be solved in a parallel and distributed manner. Such problems may appear in many fields of science and engineering: e.g.  graph
theory, networks, transportation, distributed model predictive control, distributed estimation and multistage stochastic optimization, see e.g.
\cite{Bertsekas1996b,Holmberg2001,Kojima1993,Komodakis2010,Purkayastha2008,Samar2007,Tsiaflakis2010,Venkat2008,Zhao2005} and the references quoted therein.
Solving large-scale optimization problems is still a challenge in many applications \cite{Connejo2006} due to the limitations of computational devices
and computer systems. Recently, thanks to the development of parallel and distributed computer systems, many large-scale problems have been solved by using the
framework of decomposition. However, methods and algorithms for solving this type of problems, which can be performed in a parallel or distributed manner, are
still limited \cite{Bertsekas1989,Connejo2006}.

In this paper we develop a new optimization algorithm to solve the following structured convex optimization problem with a separable objective function and
coupling linear constraints:
\begin{equation}\label{eq:primal_cvx_prob}
\phi^{*} := \left\{ \begin{array}{cl}
\displaystyle\min_{x\in\mathbb{R}^{n}} & \Big\{\phi(x) := \displaystyle\sum_{i=1}^M\phi_i(x_{i})\Big\}\\
\textrm{s.t.}~ &x_{i} \in X_{i}~ (i=1,\dots, M),\\
& \displaystyle\sum_{i=1}^M(A_{i}x_{i} - b_i) = 0,
\end{array}\right.
\end{equation}
where, for every $i\in\set{1,2,\dots, M}$, $\phi_i:\mathbb{R}^{n_i} \to \mathbb{R}$ is convex (not necessarily strictly convex) and possibly \textit{nonsmooth}
functions, $X_{i} \in\mathbb{R}^{n_i}$ is nonempty, closed and convex sets, $A_{i}\in\mathbb{R}^{m\times n_i}$ and $b_i\in\mathbb{R}^m$, and
$n_1+n_2 +\cdots + n_M = n$. Here, $x_i\in X_i$ is referred to as a local convex constraint and the final constraint is called \textit{coupling linear
constraint}.

In the literature, several approaches based on decomposition techniques have been proposed to solve problem \eqref{eq:primal_cvx_prob}. In order to observe
the differences between those methods and our approach in this paper, we briefly classify some of these that we found most related. The first class
of algorithms is based on Lagrangian relaxation and subgradient methods of multipliers \cite{Bertsekas1989,Duchi2012,Nedic2009}. It has been observed that
subgradient methods are usually slow and numerically sensitive to the choice of step sizes in practice \cite{Nesterov2004}. Moreover, the convergence rate of
these methods is in general $O(1/\sqrt{k})$, where $k$ is the iteration counter.
The second approach relies on augmented Lagrangian functions, see e.g. \cite{Hamdi2005,Ruszczynski1995}. Many variants were proposed and tried to process the
inseparability of the crossproduct terms in the augmented Lagrangian function in different ways. Besides this approach, the authors in \cite{Han1988} considered
the dual decomposition based on Fenchel's duality theory. Another research direction is based on alternating direction methods which were studied, for example,
in \cite{Boyd2011,Fukushima1992,Goldfarb2010,Kontogiorgis1996}. Alternatively, proximal point-type methods were extended to the decomposition framework, see,
e.g. \cite{Chen1994,Tseng1997}. Other researchers employed interior point methods in the framework of decomposition such as
\cite{Kojima1993,Mehrotra2009,Necoara2009,TranDinh2012c,Zhao2005}.
Furthermore, the mean value cross decomposition in \cite{Holmberg2006}, the partial inverse method in \cite{Spingarn1985} and the accelerated gradient method of
multipliers in \cite{Necoara2008} were also proposed to solve problem \eqref{eq:primal_cvx_prob}. We note that decomposition and splitting methods are very
well developed in convex optimization, especially in generalized equations and variational inequalities, see e.g.
\cite{Combettes2008,Eckstein1992,BricenoArias2011}.
Recently, we have proposed a new decomposition method to solve problem \eqref{eq:primal_cvx_prob} in  \cite{TranDinh2012a} based on two primal steps and one
dual step. It is proved that the convergence rate of the algorithm is $O(1/k)$ which is much better than the subgradient-type methods of multipliers
\cite{Bertsekas1989} but its computational complexity per iteration is higher that of these classical methods. Moreover, the algorithm uses an
automatic strategy to update the parameters which improves the numerical efficiency in practice.

In this paper, we propose a new inexact decomposition algorithm for
solving \eqref{eq:primal_cvx_prob} which employs  smoothing
techniques \cite{Nesterov2005} and excessive gap condition
\cite{Nesterov2005a}.

\vskip0.1cm
\noindent\textbf{Contribution. }
The contribution of the paper is as follows:
\begin{enumerate}
\item We propose a new  decomposition algorithm based on inexact dual gradients.  This algorithm requires only one primal step and two dual steps at each
iteration and allows one to solve the subproblem of each component inexactly and in parallel. Moreover, all the algorithmic parameters are updated
automatically without using any tuning strategy.

\item We prove the convergence of the proposed algorithm and show that the convergence rate is $O\left(\frac{1}{k}\right)$, where $k$ is the
iteration counter. Due to the automatic update of the algorithmic parameters and the low computational complexity per iteration, the proposed
algorithm performs better than some related existing decomposition algorithms from  the literature in terms of computational time.

\item An extension to a switching strategy is also presented.  This algorithm updates simultaneously two smoothness parameters at each iteration and makes use
of the inexactness of the gradients of the smoothed dual function.
\end{enumerate}

Let us emphasize the following points of the contribution.
The first algorithm proposed in this paper consists of two dual steps and one primal step per iteration. This requires  solving the primal subproblems in
parallel only \textit{once} but needs one more dual step. Because the dual step corresponds only to a simple matrix-vector multiplication, the computational
cost of the proposed algorithm is significantly reduced compared to some existing decomposition methods in the literature. Moreover, since solving the primal
subproblems exactly is only \textit{conceptual} (except existing a closed form solution), we propose an inexact algorithm which allows one to solve these
problems up to a given accuracy. The accuracies
of solving the primal subproblems are adaptively chosen such that the convergence of the whole algorithm is preserved. The parameters in the algorithm are
updated automatically based on an analysis of the iteration scheme.  This is different from augmented Lagrangian approaches \cite{Boyd2011,Fukushima1992} where
we need to find an appropriate way to tune the penalty parameter in each practical situation.

In the switching variant, apart from the inexactness, this algorithm allows one to update simultaneously both
smoothness parameters at each iteration. The advantage of this algorithm compared to the first one is that it takes into account the convergence behavior of the
primal and dual steps which accelerates the convergence of the algorithm in some practical situations. Since both algorithms are primal-dual methods, we not
only obtain an approximate solution of the dual problem but also an approximate solution of the original problem \eqref{eq:primal_cvx_prob} without any
auxiliary computation.

\vskip0.1cm
\noindent\textbf{Paper outline. }
The rest of this paper is organized as follows. In the next section, we briefly  describe the Lagrangian dual decomposition method for separable convex
optimization. Section \ref{sec:smoothing} mainly presents the smoothing technique via prox-functions as well as the inexact excessive gap condition.
Section \ref{sec:decomp_alg} builds a new inexact algorithm called  \textit{inexact decomposition algorithm with two dual steps} (Algorithm \ref{alg:A1}). The
convergence rate of this algorithm is established.  Section \ref{sec:inexact_switching_alg} presents an inexact switching variant of Algorithm \ref{alg:A1}
proposed in Section
\ref{sec:decomp_alg}. Numerical examples are presented in Section \ref{sec:num_results} to examine the performance of the proposed algorithms and a numerical
comparison is made. In order to make the paper more compact, we move some the technical proofs to Appendix \ref{sec:proofs}.

\vskip 0.1cm
\noindent\textbf{Notation. }
Throughout  the paper, we shall consider the Euclidean space $\mathbb{R}^n$ endowed with an inner product $x^Ty$ for $x, y \in \mathbb{R}^n$ and the norm
$\norm{x} := \sqrt{x^Tx}$. For a given matrix $A \in \mathbb{R}^{m\times n}$, the spectral norm  $\norm{A}$ is used in the paper. The notation $x = (x_1,
\dots, x_M)$ represents a column vector in $\mathbb{R}^n$, where $x_i$ is a subvector in $\mathbb{R}^{n_i}$ and  $i=1,\dots, M$. We denote by $\mathbb{R}_{+}$
and $\mathbb{R}_{++}$ the sets of nonnegative and positive real numbers, respectively. We also use $\partial{f}$ for the subdifferential of a convex function
$f$. For a given convex set $X$ in $\mathbb{R}^n$, we denote $\mathrm{ri}(X)$ the relative interior of $X$, see, e.g. \cite{Rockafellar1970}.

\section{Lagrangian dual decomposition}\label{Lagrangian_decomp}
In this section, we briefly describe the Lagrangian dual decomposition technique in convex optimization, see, e.g. \cite{Bertsekas1989}.
Let $x := (x_1, \dots, x_M)$ be a  vector and
$A := [A_1,\dots, A_M]$ be a matrix formed from $M$ components $x_i$ and $A_i$, respectively. Let $b := \sum_{i=1}^Mb_i$ and $X :=
X_1\times\cdots\times X_M$.
The Lagrange function associated with the coupling constraint $Ax - b = 0$ is defined by
\begin{equation*}
\mathcal{L}(x, y) := \phi(x) + y^T(Ax - b) = \sum_{i=1}^M\left[\phi_i(x_i) + y^T(A_ix_i-b_i)\right],
\end{equation*}
where $y$ is the Lagrange multiplier associated with $Ax - b = 0$.
The dual problem of \eqref{eq:primal_cvx_prob} is written as
\begin{equation}\label{eq:dual_cvx_prob}
g^{*} := \max_{y\in\mathbb{R}^m}g(y),
\end{equation}
where $g(\cdot)$ is the dual function defined by
\begin{equation}\label{eq:dual_fun}
g(y) := \min_{x\in X}\mathcal{L}(x, y) = \min_{x\in X}\{\phi(x) + y^T(Ax-b)\}.
\end{equation}
Note that the dual function $g$ can be computed in \textit{parallel} for each component $x_i$ as
\begin{eqnarray}\label{eq:gi_fun}
g(y) := \sum_{i=1}^Mg_i(y), ~~~\mathrm{where}~ g_i(y) \!:=\! \min_{x_i\in X_i}\!\!\left\{ \phi_i(x_i) + y^T(A_ix_i - b_i)\right\}, ~~i=1,\dots, M.
\end{eqnarray}
We denote $x_i^{*}(y)$ a solution of the minimization problem in \eqref{eq:gi_fun}. Consequently, $x^{*}(y) := (x_1^{*}(y), \dots, x_M^{*}(y))$ is a
solution of \eqref{eq:dual_fun}.
It is well-known that $g$ is concave and  the dual problem \eqref{eq:dual_cvx_prob} is convex but nondifferentiable in general.

Throughout the paper, we assume that the following assumptions hold \cite{Ruszczynski1995}.
\begin{assumption}\label{as:A1}
The solution set $X^{*}$ of \eqref{eq:primal_cvx_prob} is nonempty and either $X$ is polyhedral or the Slater constraint qualification condition for problem
\eqref{eq:primal_cvx_prob} holds, i.e.
\begin{equation}\label{eq:slater_cond}
\left\{ x\in\mathbb{R}^n ~|~ Ax - b = 0 \right\} ~\cap~ \mathrm{ri}(X) \neq \emptyset.
\end{equation}
For each $i \in \left\{1, 2, \dots, M\right\}$, $\phi_i$ is proper, lower semicontinuous and convex in $\mathbb{R}^{n_i}$.
\end{assumption}

If $X$ is convex and bounded then $X^{*}$ is also convex and bounded. Note that the objective function $\phi$ is not necessarily smooth. For example, $\phi(x)
:= \norm{x}_1 = \sum_{i=1}^n|x_{(i)}|$,  which is nonsmooth and separable, can be handled in our framework. Under Assumption \aref{as:A1}, the solution set
$Y^{*}$ of the dual problem \eqref{eq:dual_cvx_prob} is nonempty and bounded. Moreover, \textit{strong duality condition} holds, i.e. for all $(x^{*},
y^{*})\in X^{*}\times Y^{*}$ we have $\phi^{*} = \phi(x^{*}) = g(y^{*}) = g^{*}$.
If strong duality holds then we can refer to $g^{*}$ or $\phi^{*}$ as the \textit{primal-dual optimal value}.

\section{Smoothing via prox-functions}\label{sec:smoothing}
Since the dual function $g$ is in general nonsmooth, one can  apply smoothing techniques to approximate $g$ up to a desired accuracy. In this section, we
propose to use a smoothing technique via proximity functions proposed in \cite{Nesterov2005}.

\vskip0.1cm
\noindent\textbf{3.1. Proximity functions. }
Let $C$ be a nonempty, closed and  convex set in $\mathbb{R}^n$. We consider a nonnegative, continuous and strongly convex function $p_C : C \to \mathbb{R}_{+}$
with a convexity parameter $\sigma_p > 0$. As usual, we call $p_C$ a \textit{proximity function} (prox-function) associated with the convex set $C$. Let
\begin{equation}\label{eq:D_quantity}
p^{*}_C := \min_{x\in C}p_C(x) ~~~\mathrm{and}~~~ D_C := \sup_{x\in C}p_C(x).
\end{equation}
Since $p_C$ is strongly convex, there exists a unique  point $x^c\in C$ such that $p^{*}_C = p_C(x^c)$. The point $x^c$ is called the \textit{proximity center}
of $C$ w.r.t. $p_C$.  Moreover, if $C$ is bounded then $0 \leq p^{*}_C \leq D_C < +\infty$. Without loss of generality, we can assume that $p_C^{*} > 0$.
Otherwise, we can shift this function as $\bar{p}_C(x) := p_C(x) + r_0$, where $r_0 + p_C^{*} > 0$.

\begin{remark}\label{re:quad_prox}
We note that the simplest prox-function is the quadratic form $p_C(x) := \frac{\sigma_p}{2}\norm{x-x^c}^2 + r$, where $r > 0$,
$\sigma_p > 0$ and $x^c\in C$ are given. If the set $C$ has a specific structure then one can choose an appropriate prox-function
that  captures better the structure of $C$ than  the quadratic prox-function. For example, if $C$ is a standard simplex, one can
choose the entropy prox-function as mentioned in \cite{Nesterov2005}. If $C$ has no specific structure, then we can use the quadratic prox-function given above.
Consequently, the convex problem generated using quadratic prox-functions reduces in some cases to a simple optimization problem, so that its solution
can be computed numerically very efficient.
\end{remark}

\vskip0.1cm
\noindent\textbf{3.2. Smoothed approximations.}
In order to build smoothed approximations of the objective function $\phi$ and the dual function $g$ in the framework of the primal-dual smoothing technique
proposed in \cite{Nesterov2005a}, we make the following assumption.

\begin{assumption}\label{as:A2}
Each feasible set $X_i$ admits a  prox-function $p_{X_i}$  with a
convexity parameter $\sigma_i > 0$ and the proximity center $x_i^c$.
Further, we assume
\begin{equation*}
0 <
p_{X_i}^{*} := \displaystyle\min_{x\in X_i}p_{X_i}(x_i) \leq D_{X_i} := \displaystyle\sup_{x_i\in X_i}p_{X_i}(x_i) < +\infty, ~~i=1,\dots, M.
\end{equation*}
\end{assumption}

\noindent
If $X_i$ is bounded for $i=1,\dots,M$, then Assumption \ref{as:A2} is satisfied. If $X_i$ is unbounded, then we can assume that our sample points generated by
the proposed algorithms are bounded. In this case, we can restrict the feasible set of problem \eqref{eq:primal_cvx_prob} on $X\cap C$, where $C$ is a given
compact set which contains the sample points and the desired solutions of \eqref{eq:primal_cvx_prob}.

We denote by
\begin{equation}\label{eq:D_X_p_X}
p_X(x) := \sum_{i=1}^Mp_{X_i}(x_i), ~ p_X^{*} := \sum_{i=1}^Mp_{X_i}^{*} > 0, ~\mathrm{and} ~ D_X := \sum_{i=1}^MD_{X_i} < +\infty.
\end{equation}
Since $\phi_i$ is not necessarily strictly convex, the function $g_i$ defined by \eqref{eq:gi_fun} may not be differentiable. We consider the following
function
\begin{eqnarray}\label{eq:smoothed_gi}
g(y; \beta_1) := \sum_{i=1}^Mg_i(y; \beta_1), ~\textrm{where}~
g_i(y; \beta_1) := \min_{x_{i}\in X_{i}}\left\{\phi_i(x_{i}) +
y^T(A_{i}x_{i} - b_i) + \beta_1p_{X_i}(x_{i})\right\},
\end{eqnarray}
for $i=1,\dots, M$ and $\beta_1 > 0$ is a given \textit{smoothness parameter}.
We denote $x_i^{*}(y; \beta_1)$  the unique solution of \eqref{eq:smoothed_gi}, i.e.
\begin{eqnarray}\label{eq:smooth_dual_sol}
&x^{*}_i(y; \beta_1) := \textrm{arg}\!\!\displaystyle\min_{x_{i}\in X_{i}}\left\{\phi_i(x_{i}) + y^T(A_{i}x_{i} - b_i) +
\beta_1p_{X_i}(x_{i})\right\}, ~i=1, \dots, M,
\end{eqnarray}
and $x^{*}(y;\beta_1) := (x^{*}_1(y;\beta_1), \dots, x^{*}_M(y; \beta_1))$.
We call each minimization problem in \eqref{eq:smoothed_gi} a \textit{primal subproblem}.
Note that we can use different smoothness parameters $\beta_1^{i}$ in \eqref{eq:smoothed_gi} for each $i\in\{1,\dots, M\}$.
First, we recall the following properties of $g(\cdot; \beta_1)$, see \cite{Nesterov2005}.

\begin{lemma}\label{le:dual_smoothed_estimates}
For any $\beta_1 > 0$, the function $g(\cdot; \beta_1)$ defined by \eqref{eq:smoothed_gi} is concave and differentiable. The gradient of $g(\cdot;\beta_1)$ is
given by
$\nabla_y{g}(y; \beta_1) := Ax^{*}(y; \beta_1) - b$  which is
Lipschitz continuous with a Lipschitz constant
\begin{equation}\label{eq:Lg_beta}
L^{g}(\beta_1) := \frac{1}{\beta_1}\sum_{i=1}^M\frac{\norm{A_i}^2}{\sigma_i}.
\end{equation}
Moreover, we have the following estimates:
\begin{equation}\label{eq:g_approx}
g(y; \beta_1) - \beta_1D_X \leq g(y) \leq g(y; \beta_1),
\end{equation}
and
\begin{equation}\label{eq:g_Lipschitz_bound}
g(\tilde{y}; \beta_1) + \nabla_yg(\tilde{y}; \beta_1)^T(y-\tilde{y}) - \frac{L^g(\beta_1)}{2}\norm{y-\tilde{y}}^2 \leq g(y; \beta_1), ~\forall
y, \tilde{y}\in\mathbb{R}^m.
\end{equation}
\end{lemma}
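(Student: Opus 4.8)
The plan is to establish the five claims in turn, treating each component $g_i(\cdot;\beta_1)$ separately and then summing. For concavity, differentiability and the gradient formula, I would observe that for fixed $x_i$ the inner objective in \eqref{eq:smoothed_gi} is affine in $y$, so $g_i(\cdot;\beta_1)$ is an infimum of affine functions of $y$ and hence concave; summing over $i$ gives concavity of $g(\cdot;\beta_1)$. Since $\phi_i$ is convex, $y^T(A_ix_i-b_i)$ is linear, and $\beta_1 p_{X_i}$ is strongly convex with modulus $\beta_1\sigma_i$, the inner objective is strongly convex in $x_i$, so the minimizer $x_i^{*}(y;\beta_1)$ in \eqref{eq:smooth_dual_sol} is unique. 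Danskin's theorem then yields differentiability of $g_i(\cdot;\beta_1)$ with $\nabla_y g_i(y;\beta_1) = A_i x_i^{*}(y;\beta_1) - b_i$, whence $\nabla_y g(y;\beta_1) = Ax^{*}(y;\beta_1)-b$.

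The main technical step is the Lipschitz bound \eqref{eq:Lg_beta}. I would fix two multipliers $y,\hat{y}$, abbreviate $x_i^{*} := x_i^{*}(y;\beta_1)$ and $\hat{x}_i^{*} := x_i^{*}(\hat{y};\beta_1)$, and exploit strong convexity of the inner objective $F_i(x_i,y) := \phi_i(x_i) + y^T(A_ix_i-b_i) + \beta_1 p_{X_i}(x_i)$. Applying the strong-convexity inequality with modulus $\beta_1\sigma_i$ at the two minimizers and adding the resulting pair, the $\phi_i$ and $p_{X_i}$ terms cancel and one is left with $(y-\hat{y})^T A_i(\hat{x}_i^{*} - x_i^{*}) \geq \beta_1\sigma_i\norm{\hat{x}_i^{*} - x_i^{*}}^2$. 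Cauchy--Schwarz together with the spectral-norm bound $\norm{A_i^T(y-\hat{y})} \leq \norm{A_i}\norm{y-\hat{y}}$ then gives $\norm{\hat{x}_i^{*} - x_i^{*}} \leq (\norm{A_i}/(\beta_1\sigma_i))\norm{y-\hat{y}}$. Writing $\nabla_y g(y;\beta_1)-\nabla_y g(\hat{y};\beta_1) = \sum_{i=1}^M A_i(x_i^{*}-\hat{x}_i^{*})$ and applying the triangle inequality yields the constant $L^{g}(\beta_1) = \beta_1^{-1}\sum_{i=1}^M \norm{A_i}^2/\sigma_i$. I expect this cancellation-and-Cauchy--Schwarz argument to be the crux; the delicate points are keeping the sign of the pairing correct and checking that the component-wise estimate sums to exactly this constant rather than to a looser bound obtained by working with the full matrix $A$ and a single convexity parameter.

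For the two-sided estimate \eqref{eq:g_approx}, I would use $0 \leq p_{X_i}(x_i) \leq D_{X_i}$ on $X_i$. The right inequality follows because adding the nonnegative term $\beta_1 p_{X_i}$ only raises the minimum, so $g_i(y) \leq g_i(y;\beta_1)$. For the left inequality, I evaluate the smoothed objective at the unsmoothed minimizer $x_i^{*}(y)$ and bound $\beta_1 p_{X_i}(x_i^{*}(y)) \leq \beta_1 D_{X_i}$, which gives $g_i(y;\beta_1) \leq g_i(y) + \beta_1 D_{X_i}$; summing over $i$ and invoking $D_X = \sum_{i=1}^M D_{X_i}$ from \eqref{eq:D_X_p_X} closes the estimate.

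Finally, inequality \eqref{eq:g_Lipschitz_bound} is the standard descent lemma applied to the convex function $-g(\cdot;\beta_1)$, whose gradient is $L^{g}(\beta_1)$-Lipschitz by the previous step: one has $-g(y;\beta_1) \leq -g(\tilde{y};\beta_1) - \nabla_y g(\tilde{y};\beta_1)^T(y-\tilde{y}) + \frac{L^{g}(\beta_1)}{2}\norm{y-\tilde{y}}^2$, and negating both sides reproduces exactly \eqref{eq:g_Lipschitz_bound}. No further obstacle arises at this stage, so the whole lemma reduces to the strong-convexity estimate of the second paragraph.
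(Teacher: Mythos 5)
Your proposal is correct, and in fact the paper offers no proof of this lemma at all: it is recalled from Nesterov's smoothing framework (``see \cite{Nesterov2005}''), with the intended reference being the smooth-minimization results of \cite{Nesterov2005c}. Your argument is precisely the standard one from that literature --- strong convexity of the inner objective giving uniqueness and the Danskin gradient formula, the paired strong-convexity/Cauchy--Schwarz estimate $(y-\hat{y})^TA_i(\hat{x}_i^{*}-x_i^{*}) \geq \beta_1\sigma_i\norm{\hat{x}_i^{*}-x_i^{*}}^2$ yielding the blockwise Lipschitz constant in \eqref{eq:Lg_beta}, the $0 \leq p_{X_i} \leq D_{X_i}$ sandwich for \eqref{eq:g_approx}, and the descent lemma for \eqref{eq:g_Lipschitz_bound} --- so it correctly reconstructs what the authors take as known.
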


\noindent Next, we consider the variation of the function $g(y;\cdot)$ w.r.t. the parameter $\beta_1$.

\begin{lemma}\label{le:g_beta_1}
Let us fix $y\in\mathbb{R}^m$. The function $g(y; \cdot)$ defined by \eqref{eq:smoothed_gi} is well-defined, nondecreasing, concave and differentiable in
$\mathbb{R}_{++}$. Moreover,  the following inequality holds:
\begin{equation}\label{eq:g_beta1_Lipschitz_bound}
g(y; \beta_1) \leq g(y; \tilde{\beta}_1) + (\beta_1-\tilde{\beta}_1)p_X(x^{*}(y; \tilde{\beta}_1)), ~\beta_1,
\tilde{\beta}_1\in\mathbb{R}_{++},
\end{equation}
where $x^{*}(y; \tilde{\beta}_1)$ is defined by \eqref{eq:smooth_dual_sol}.
\end{lemma}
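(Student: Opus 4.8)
The plan is to rewrite $g(y;\cdot)$ as a single parametric minimization over the product set $X=X_1\times\cdots\times X_M$ and then read each property off the structure of the objective. For fixed $y$, put $h(x):=\phi(x)+y^T(Ax-b)$, which is independent of $\beta_1$. Combining the separable form \eqref{eq:smoothed_gi} with the definitions in \eqref{eq:D_X_p_X}, and using that a sum of minima over the independent factors equals the minimum of the sum over $X$, I would obtain
\begin{equation*}
g(y;\beta_1)=\min_{x\in X}\set{h(x)+\beta_1 p_X(x)},
\end{equation*}
so that for each fixed $x\in X$ the map $\beta_1\mapsto h(x)+\beta_1 p_X(x)$ is affine with slope $p_X(x)\ge p_X^{*}>0$. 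Three of the four properties are now immediate. Well-definedness on $\R_{++}$ holds because for each $\beta_1>0$ the objective is strongly convex (its prox-term has positive convexity parameter), so the minimizer $x^{*}(y;\beta_1)$ of \eqref{eq:smooth_dual_sol} is attained and unique. Monotonicity follows because $\beta_1\le\beta_1'$ gives $h(x)+\beta_1 p_X(x)\le h(x)+\beta_1' p_X(x)$ for every $x$, and the infimum preserves this inequality; hence $g(y;\cdot)$ is nondecreasing. Concavity follows since $g(y;\cdot)$ is a pointwise infimum of affine functions of $\beta_1$.

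The delicate part is differentiability. Here I would invoke the minimization analogue of Danskin's theorem (the envelope theorem): the parametric objective $h(x)+\beta_1 p_X(x)$ is continuous in $(x,\beta_1)$, the minimum is attained, the minimizer $x^{*}(y;\beta_1)$ is unique for every $\beta_1>0$, and $\partial_{\beta_1}\!\bigl[h(x)+\beta_1 p_X(x)\bigr]=p_X(x)$. These are exactly the hypotheses that force $g(y;\cdot)$ to be differentiable with
\begin{equation*}
\frac{\partial g(y;\beta_1)}{\partial\beta_1}=p_X\bigl(x^{*}(y;\beta_1)\bigr).
\end{equation*}
Equivalently, concavity together with uniqueness of the minimizer collapses the superdifferential of $g(y;\cdot)$ at each $\beta_1$ to the singleton $\set{p_X(x^{*}(y;\beta_1))}$, which is the definition of differentiability for a concave function.

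For the inequality \eqref{eq:g_beta1_Lipschitz_bound} I would avoid differentiability entirely and argue directly. Writing $\tilde x:=x^{*}(y;\tilde\beta_1)$, the definition of the minimum gives $g(y;\beta_1)\le h(\tilde x)+\beta_1 p_X(\tilde x)$, while optimality of $\tilde x$ at $\tilde\beta_1$ gives $h(\tilde x)+\tilde\beta_1 p_X(\tilde x)=g(y;\tilde\beta_1)$. Adding and subtracting $\tilde\beta_1 p_X(\tilde x)$ yields
\begin{equation*}
g(y;\beta_1)\le g(y;\tilde\beta_1)+(\beta_1-\tilde\beta_1)p_X\bigl(x^{*}(y;\tilde\beta_1)\bigr),
\end{equation*}
which is exactly \eqref{eq:g_beta1_Lipschitz_bound}; it is also precisely the tangent-line inequality for the concave, differentiable function $g(y;\cdot)$ evaluated with the derivative computed above.

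I expect the main obstacle to be the differentiability claim. The other three properties fall out instantly from the affine-in-$\beta_1$ structure, but converting \emph{unique minimizer} into pointwise differentiability requires the envelope/Danskin machinery, and one must verify that its continuity and attainment hypotheses hold on all of $\R_{++}$ — which they do, since the strong convexity of $\beta_1 p_X$ makes the minimizer exist, be unique, and vary continuously in $\beta_1$.
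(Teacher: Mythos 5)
Your proof is correct, but it reaches the main inequality by a genuinely different route than the paper. The paper argues component-wise: it forms the joint function $\phi_i(x_i)+y^T(A_ix_i-b_i)+\beta_1 p_{X_i}(x_i)$, asserts (without justification) that $g_i(y;\cdot)$ is concave and differentiable with $\nabla_{\beta_1}g_i(y;\beta_1)=p_{X_i}(x_i^{*}(y;\beta_1))\geq 0$, then obtains \eqref{eq:g_beta1_Lipschitz_bound} for each $i$ from the concave tangent-line inequality and sums over $i$; the inequality is thus routed \emph{through} the differentiability claim. You instead aggregate everything into a single minimization over $X$ (legitimate, since the factors are independent) and prove \eqref{eq:g_beta1_Lipschitz_bound} by a two-line evaluation argument: bound $g(y;\beta_1)$ by the objective value at $x^{*}(y;\tilde{\beta}_1)$ and add and subtract $\tilde{\beta}_1 p_X(x^{*}(y;\tilde{\beta}_1))$. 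This makes the inequality — the part of the lemma actually used later — completely independent of differentiability, which is a real advantage. Conversely, you spend the care exactly where the paper spends none: you support the differentiability claim with Danskin/envelope reasoning plus uniqueness of the minimizer; the compactness this needs is available, since a strongly convex prox-function that is bounded above on $X_i$ (Assumption \aref{as:A2}) forces $X_i$ to be bounded. Your monotonicity argument (pointwise comparison of the affine objectives passed through the infimum) is also more elementary than the paper's derivative-sign argument, and it correctly yields \emph{nondecreasing}, whereas the paper's proof text contains a slip at this point, concluding ``nonincreasing'' from a nonnegative derivative.
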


\begin{proof}
Since $g = \sum_{i=1}^Mg_i$ and $p_X = \sum_{i=1}^Mp_{X_i}$, it  is sufficient to prove the inequality \eqref{eq:g_beta1_Lipschitz_bound} for $g_i(y; \cdot)$,
with $i=1,\dots, M$.
Let us fix $y\in\mathbb{R}^m$ and $i\in\{1,\dots, M\}$. We define $\phi_i(x; \beta_1) := \phi_i(x) + y^T(A_ix_i-b_i) + \beta_1p_{X_i}(x_i)$ a function of
two joint variables $x_i$ and $\beta_1$.
Since $\phi_i(\cdot;\cdot)$ is strongly convex w.r.t. $x_i$ and linear w.r.t. $\beta_1$, $g_i(y; \beta_1) := \displaystyle\min_{x_i\in X_i}\phi_i(x_i;\beta_1)$
is well-defined and concave w.r.t. $\beta_1$. Moreover, it is differentiable w.r.t. $\beta_1$ and $\nabla_{\beta_1}g(y; \beta_1) = p_{X_i}(x^{*}_i(y; \beta_1))
\geq 0$, where $x^{*}_i(y; \beta_1)$ is defined in \eqref{eq:smooth_dual_sol}.
Hence, $g_i(y; \cdot)$ is nonincreasing. By using the concavity of $g_i(y; \cdot)$ we have
\begin{align*}
g_i(y; \beta_1) \leq g_i(y; \tilde{\beta}_1) + (\beta_1-\tilde{\beta}_1)\nabla_{\beta_1}g_i(y; \tilde{\beta}_1) = g_i(y;\tilde{\beta}_1) +
(\beta_1-\tilde{\beta}_1)p_{X_i}(x^{*}_i(y; \tilde{\beta}_1)). \nonumber
\end{align*}
By summing up the last inequality from $i=1$ to $M$ and then using \eqref{eq:D_X_p_X} we obtain \eqref{eq:g_beta1_Lipschitz_bound}.
\Eproof
\end{proof}

Finally, we consider a smooth approximation to $\phi$.
Let $p_Y(y) := \frac{1}{2}\norm{y}^2$ be a prox-function defined in $\mathbb{R}^m$ with a convexity parameter $\sigma_{p_Y} = 1 > 0$. It is obvious
that
the proximity center of $p_Y$ is $y^c := 0^m\in\mathbb{R}^m$. We define the following function on $X$:
\begin{equation}\label{eq:psi_fun}
\psi(x; \beta_2) := \max_{y\in\mathbb{R}^m}\left\{ (Ax - b)^Ty - \frac{\beta_2}{2}\norm{y}^2 \right\},
\end{equation}
where $\beta_2 > 0$ is the second smoothness parameter.
We denote by $y^{*}(x;\beta_2)$ the solution of \eqref{eq:psi_fun}.
From \eqref{eq:psi_fun}, we see that $\psi(x;\beta_2)$ and $y^{*}(x;\beta_2)$ can be
computed explicitly as
\begin{equation}\label{eq:explicit_psi_fun}
\psi(x; \beta_2) := \frac{1}{2\beta_2}\norm{Ax - b}^2  ~~~\mathrm{and}~~~ y^{*}(x;\beta_2) := \frac{1}{\beta_2}(Ax - b).
\end{equation}
It clear that $\psi(x;\beta_2) \geq 0$ for all $x\in X$.
Now, we define the function $f(x; \beta_2)$ as
\begin{equation}\label{eq:smoothed_phi}
f(x; \beta_2) := \phi(x) + \psi(x; \beta_2).
\end{equation}
Then, $f(x; \beta_2)$ is exactly a quadratic penalty function of \eqref{eq:primal_cvx_prob}. The following lemma shows that $f(\cdot; \beta_2)$ is an
approximation of $\phi$.

\begin{lemma}\label{le:primal_smoothed_estimates}
The function $\psi$ defined by \eqref{eq:psi_fun} satisfies the following estimate:
\begin{eqnarray}\label{eq:psi_Lipschitz_bound}
\psi(x; \beta_2) \leq \psi(\tilde{x}; \beta_2) + \nabla_x\psi(\tilde{x}; \beta_2)^T(x -\tilde{x}) + \sum_{i=1}^M\frac{L_i^{\psi}(\beta_2)}{2}\norm{x_i
-\tilde{x}_i}^2, ~~\forall x, \tilde{x}\in X,
\end{eqnarray}
where $L^{\psi}_i(\beta_2) :=  \frac{M\norm{A_i}^2}{\beta_2}$.
Moreover, the function $f$ defined by \eqref{eq:smoothed_phi} satisfies
\begin{equation}\label{eq:psi_approx}
f(x; \beta_2) - \frac{1}{2\beta_2}\norm{Ax - b}^2 = f(x; \beta_2) - \psi(x; \beta_2) = \phi(x) \leq f(x;  \beta_2), ~\forall x\in X.
\end{equation}
\end{lemma}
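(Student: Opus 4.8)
The plan is to treat the two claims of the lemma separately, since the second is essentially definitional while the first carries all the content. For the approximation identity \eqref{eq:psi_approx}, I would simply substitute the closed-form expression $\psi(x;\beta_2) = \frac{1}{2\beta_2}\norm{Ax-b}^2$ from \eqref{eq:explicit_psi_fun} into the definition $f(x;\beta_2) = \phi(x) + \psi(x;\beta_2)$ given in \eqref{eq:smoothed_phi}. This immediately yields both $f(x;\beta_2) - \frac{1}{2\beta_2}\norm{Ax-b}^2 = f(x;\beta_2) - \psi(x;\beta_2) = \phi(x)$, and then the final inequality $\phi(x) \leq f(x;\beta_2)$ follows from the already-noted nonnegativity $\psi(x;\beta_2) \geq 0$.

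The substance is the quadratic upper bound \eqref{eq:psi_Lipschitz_bound}. First I would observe that, by \eqref{eq:explicit_psi_fun}, $\psi(\cdot;\beta_2)$ is a convex quadratic in $x$ with gradient $\nabla_x\psi(x;\beta_2) = \frac{1}{\beta_2}A^T(Ax-b)$ and constant Hessian $\frac{1}{\beta_2}A^TA$. Because all derivatives of order three and higher vanish, the second-order Taylor expansion about $\tilde{x}$ holds with \emph{equality}:
\[
\psi(x;\beta_2) = \psi(\tilde{x};\beta_2) + \nabla_x\psi(\tilde{x};\beta_2)^T(x-\tilde{x}) + \frac{1}{2\beta_2}\norm{A(x-\tilde{x})}^2.
\]
Thus proving \eqref{eq:psi_Lipschitz_bound} reduces to bounding the remainder $\frac{1}{2\beta_2}\norm{A(x-\tilde{x})}^2$ from above by the separable sum on the right-hand side.

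The key step, and the only place where the factor $M$ enters, is the block decomposition of $A$. Since $A = [A_1,\dots,A_M]$, we have $A(x-\tilde{x}) = \sum_{i=1}^M A_i(x_i-\tilde{x}_i)$. I would apply the triangle inequality together with the Cauchy--Schwarz inequality in the form $\left(\sum_{i=1}^M a_i\right)^2 \leq M\sum_{i=1}^M a_i^2$ to obtain $\norm{A(x-\tilde{x})}^2 \leq M\sum_{i=1}^M\norm{A_i(x_i-\tilde{x}_i)}^2$, and then bound each term by the spectral norm via $\norm{A_i(x_i-\tilde{x}_i)} \leq \norm{A_i}\norm{x_i-\tilde{x}_i}$. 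Dividing by $2\beta_2$ produces exactly $\sum_{i=1}^M \frac{M\norm{A_i}^2}{2\beta_2}\norm{x_i-\tilde{x}_i}^2 = \sum_{i=1}^M\frac{L_i^{\psi}(\beta_2)}{2}\norm{x_i-\tilde{x}_i}^2$, which is the asserted bound.

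No genuine obstacle is expected; the only point worth care is the origin of the constant $M$. It is the unavoidable price of replacing the coupled quadratic $\norm{A(x-\tilde{x})}^2$ by a \emph{separable}, block-diagonal majorant whose blocks can be handled independently in the decomposition scheme. A tighter but non-separable estimate would instead use $\norm{A(x-\tilde{x})}^2 \leq \norm{A}^2\norm{x-\tilde{x}}^2$; the separability required by the algorithm is exactly what forces the weaker per-block constant $\frac{M\norm{A_i}^2}{\beta_2}$.
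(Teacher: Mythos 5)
Your proposal is correct and follows essentially the same route as the paper: the paper's proof also starts from the exact identity $\psi(x;\beta_2)-\psi(\tilde{x};\beta_2)-\nabla_x\psi(\tilde{x};\beta_2)^T(x-\tilde{x})=\frac{1}{2\beta_2}\norm{A(x-\tilde{x})}^2$ and then invokes ``elementary inequalities,'' which are exactly the block decomposition $A(x-\tilde{x})=\sum_{i=1}^M A_i(x_i-\tilde{x}_i)$, the bound $\bigl(\sum_{i=1}^M a_i\bigr)^2\leq M\sum_{i=1}^M a_i^2$, and the spectral-norm estimate that you spell out. Your treatment of \eqref{eq:psi_approx} likewise matches the paper's observation that it follows directly from the definition of $f$.
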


\begin{proof}
By the definition of $\psi$, we have $\psi(x; \beta_2) - \psi(\tilde{x}; \beta_2) -  \nabla_x\psi(\tilde{x}; \beta_2)^T(x -\tilde{x}) =
\frac{1}{2\beta_2}\norm{A(x-\tilde{x})}^2$. Thus \eqref{eq:psi_Lipschitz_bound} follows from this equality by applying some elementary
inequalities. The bounds \eqref{eq:psi_approx} follow directly from the definition \eqref{eq:smoothed_phi} of $f$.
\Eproof
\end{proof}

\vskip0.1cm \noindent\textbf{3.3. Inexact solutions of the primal
subproblem.} Regarding the primal subproblem \eqref{eq:smoothed_gi},
if the objective function $\phi_i$ has a specific form, e.g.
univariate functions,  then we can solve this problem analytically
(exactly) to obtain a \textit{closed form} solution. A simple
example of such function is $\phi_i(x_i) = \abs{x_i}$. However, in
most practical problems, solving  the primal subproblem
\eqref{eq:smoothed_gi} exactly is only conceptual. In practice, we
only solve this problem up to a given accuracy. In other words, for
each $i\in\{1,2,\dots, M\}$, the solution $x^{*}_i(y; \beta_1)$ in
\eqref{eq:smoothed_gi} is approximated by
\begin{eqnarray}\label{eq:ap_smoothed_gi_sol}
\tilde{x}^{*}_i(y; \beta_1) :\approx \textrm{arg}\!\!\!\displaystyle\min_{x_{i}\in X_{i}}\left\{\phi_i(x_{i}) + y^T(A_{i}x_{i} - b_i) +
\beta_1p_{X_i}(x_{i})\right\},
\end{eqnarray}
in the sense of the following definition.

\begin{definition}\label{de:inexactness}
We say that the point $\tilde{x}_i^{*}(y; \beta_1)$ approximates $x^{*}_i(y; \beta_1)$ defined by \eqref{eq:smooth_dual_sol} up to a given accuracy
$\varepsilon_i\geq 0$ if:
\begin{itemize}
\item[$\mathrm{a)}$] it is feasible to $X_i$, i.e. $\tilde{x}_i^{*}(y; \beta_1)\in X_i$;
\item[$\mathrm{b)}$] $\tilde{x}_i^{*}(y; \beta_1)$ satisfies the condition:
\begin{equation}\label{eq:inexactness_xi}
0 \leq h_i(\tilde{x}_i^{*}(y; \beta_1); y, \beta_1) - h_i(x^{*}_i(y; \beta_1); y, \beta_1) \leq \frac{\beta_1\sigma_i}{2}\varepsilon_i^2,
\end{equation}
\end{itemize}
where $h_i(x_i; y, \beta_1) := \phi_i(x_i) + y^T(A_ix_i - b_i) + \beta_1p_{X_i}(x_i)$.
\end{definition}

In practice, for a given accuracy  $\varepsilon_i > 0$, we can check
whether the conditions of Definition \ref{de:inexactness} are
satisfied by applying classical convex optimization algorithms, e.g.
(sub)gradient or interior-point algorithms \cite{Nesterov2004}.

Since $h_i(\cdot;y, \beta_1)$ is strongly convex with a convexity parameter  $\beta_1\sigma_i > 0$, we have
\begin{eqnarray}\label{eq:ap_xi_star_sol}
\frac{\beta_1\sigma_i}{2}\norm{\tilde{x}^{*}_i(y;\beta_1) - x^{*}_i(y;\beta_1)}^2 \leq h_i(\tilde{x}^{*}_i(y;\beta_1); y,
\beta_1) - h_i(x^{*}_i(y;\beta_1); y, \beta_1)  \leq \frac{\beta_1\sigma_i}{2}\varepsilon_i^2,
\end{eqnarray}
where $h_i(\cdot; y, \beta_1)$ is defined as in Definition \ref{de:inexactness}.
Consequently, we have: $\norm{\tilde{x}^{*}_i(y; \beta_1) - x^{*}_i(y;\beta_1)} \leq \varepsilon_i$ for $i=1,\dots, M$.
Let $\tilde{x}^{*}(y; \beta_1) := (\tilde{x}^{*}_1(y; \beta_1),\dots, \tilde{x}_M^{*}(y; \beta_1))$ and
\begin{equation}\label{eq:app_deriv_g}
\widetilde{\nabla}_yg(y; \beta_1) := A\tilde{x}^{*}(y; \beta_1) - b.
\end{equation}
The quantity $\widetilde{\nabla}_yg(\cdot;\beta_1)$ can be referred to as  an approximation of the gradient $\nabla_y{g}(\cdot; \beta_1)$ defined in Lemma
\ref{le:dual_smoothed_estimates}. If we denote by $\mathbf{\varepsilon} := (\varepsilon_1,\dots,\varepsilon_M)^T$ the vector of accuracy levels then we can
easily estimate
\begin{equation}\label{eq:app_deriv_g_est}
\norm{\widetilde{\nabla}_yg(y; \beta_1) - \nabla_yg(y; \beta_1)} = \norm{A(\tilde{x}^{*}(y; \beta_1) - x^{*}(y; \beta_1))}
\leq \norm{A}\norm{\mathbf{\varepsilon}}.
\end{equation}

\vskip0.1cm
\noindent\textbf{3.4. Inexact excessive gap condition.}
Since problem \eqref{eq:primal_cvx_prob} is convex, under Assumption \aref{as:A1}  strong duality holds. The aim is to generate a primal-dual sequence
$\{(\bar{x}^k, \bar{y}^k)\}_{k\geq 0}$ such that for a sufficiently large $k$ the point $\bar{x}^k$ is approximately feasible to
\eqref{eq:primal_cvx_prob}, i.e. $\norm{A\bar{x}-b} \leq \varepsilon_p$, and the primal-dual gap satisfies $\abs{\phi(\bar{x}^k) - g(\bar{y}^k)} \leq
\varepsilon_d$ for given tolerances $\varepsilon_d \geq 0$ and $\varepsilon_p\geq 0$.

The algorithm designed  below will employ the approximate functions \eqref{eq:smoothed_gi}-\eqref{eq:smoothed_phi} to solve the primal-dual problems
\eqref{eq:primal_cvx_prob}-\eqref{eq:dual_cvx_prob}.
First, we modify the excessive gap condition introduced by Nesterov in \cite{Nesterov2005a} to the inexact case in the following definition.

\begin{definition}\label{de:exessive_gap_cond}
A point $(\bar{x}, \bar{y})\in X\times\mathbb{R}^m$   satisfies the \textit{inexact excessive gap} ($\delta$-\textit{excessive gap}) condition w.r.t. $\beta_1 >
0$ and $\beta_2 > 0$ and a given accuracy $\delta \geq 0$ if
\begin{equation}\label{eq:excessive_gap_inexact}
f(\bar{x}; \beta_2) \leq g(\bar{y}; \beta_1) + \delta.
\end{equation}
\end{definition}
If $\delta = 0$ then \eqref{eq:excessive_gap_inexact} reduces to the exact excessive gap  condition considered in \cite{Nesterov2005a}.

The following lemma provides an upper bound estimate for the primal-dual gap and the feasibility gap of problem \eqref{eq:primal_cvx_prob}.

\begin{lemma}\label{le:excessive_gap}
Suppose that $(\bar{x}, \bar{y}) \in X\times\mathbb{R}^m$ satisfies the $\delta$-excessive gap condition \eqref{eq:excessive_gap_inexact}. Then for any
$y^{*}\in Y^{*}$, we have
\begin{eqnarray}
\mathcal{F}(\bar{x}) := \norm{A\bar{x} - b} &&{\!\!\!}\leq \beta_2\Big[\norm{y^{*}} + \Big(\norm{y^{*}}^2 + \frac{2\beta_1}{\beta_2}D_X
+ \frac{2\delta}{\beta_2}\Big)^{1/2}\Big], \label{eq:feasibility}\\
[-1.5ex]
\textrm{and} {~~~~~~~~~~~~~~~~~~~~~~~~~~~}&& \nonumber\\
[-1.5ex]
-\norm{y^{*}}\mathcal{F}(\bar{x}) &&{\!\!\!}\leq \phi(\bar{x}) - g(\bar{y}) \leq \delta \!+\! \beta_1D_X - \frac{\mathcal{F}(\bar{x})^2}{2\beta_2} \leq \delta +
\beta_1D_X. \label{eq:dual_gap}
\end{eqnarray}
\end{lemma}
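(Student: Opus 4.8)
The plan is to exploit the $\delta$-excessive gap condition \eqref{eq:excessive_gap_inexact} together with the approximation estimates from Lemmas \ref{le:dual_smoothed_estimates} and \ref{le:primal_smoothed_estimates} and strong duality. Throughout I would abbreviate $\mathcal{F} := \mathcal{F}(\bar{x}) = \norm{A\bar{x} - b}$.

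First I would establish the upper bound on the primal-dual gap. Using the explicit form $f(\bar{x}; \beta_2) = \phi(\bar{x}) + \frac{1}{2\beta_2}\mathcal{F}^2$ obtained from \eqref{eq:explicit_psi_fun} and \eqref{eq:smoothed_phi}, together with the right-hand estimate $g(\bar{y}; \beta_1) \leq g(\bar{y}) + \beta_1 D_X$ from \eqref{eq:g_approx}, the condition \eqref{eq:excessive_gap_inexact} gives immediately $\phi(\bar{x}) + \frac{1}{2\beta_2}\mathcal{F}^2 \leq g(\bar{y}) + \beta_1 D_X + \delta$. Rearranging yields the middle upper bound $\phi(\bar{x}) - g(\bar{y}) \leq \delta + \beta_1 D_X - \frac{\mathcal{F}^2}{2\beta_2}$ in \eqref{eq:dual_gap}, and discarding the nonnegative quadratic term produces the weaker bound $\delta + \beta_1 D_X$.

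Next I would derive the lower bound, which is the most delicate step because $\bar{x}$ need not be feasible for \eqref{eq:primal_cvx_prob} and so a naive inequality $\phi(\bar{x}) \geq \phi^{*}$ is unavailable. The key is the Lagrangian lower bound: since $g(y^{*}) = \min_{x\in X}\{\phi(x) + (y^{*})^T(Ax-b)\} \leq \phi(\bar{x}) + (y^{*})^T(A\bar{x}-b)$, while $g(y^{*}) = g^{*} = \phi^{*} \geq g(\bar{y})$ by weak and strong duality (Assumption \aref{as:A1}), one obtains $\phi(\bar{x}) - g(\bar{y}) \geq -(y^{*})^T(A\bar{x}-b)$. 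Applying the Cauchy--Schwarz inequality then gives $-(y^{*})^T(A\bar{x}-b) \geq -\norm{y^{*}}\mathcal{F}$, which is exactly the left inequality in \eqref{eq:dual_gap}.

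Finally I would combine the two bounds to get $-\norm{y^{*}}\mathcal{F} \leq \delta + \beta_1 D_X - \frac{\mathcal{F}^2}{2\beta_2}$, which after multiplying by $2\beta_2 > 0$ becomes the quadratic inequality $\mathcal{F}^2 - 2\beta_2\norm{y^{*}}\mathcal{F} - 2\beta_2(\delta + \beta_1 D_X) \leq 0$ in the single nonnegative variable $\mathcal{F}$. Since the leading coefficient is positive and the constant term is nonpositive, $\mathcal{F}$ cannot exceed the larger root of the associated quadratic; computing that root and factoring out $\beta_2$ recovers the feasibility estimate \eqref{eq:feasibility}. I expect the main obstacle to be the lower-bound step: recognizing that one must invoke the Lagrangian bound and strong duality to handle the infeasibility of $\bar{x}$, rather than any direct comparison with $\phi^{*}$. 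Once this is in place, the feasibility bound follows from elementary algebra and solving a single quadratic.
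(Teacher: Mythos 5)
Your proposal is correct and follows essentially the same route as the paper's proof: the upper bound in \eqref{eq:dual_gap} from \eqref{eq:g_approx}, \eqref{eq:psi_approx} and the $\delta$-excessive gap condition, the lower bound from the Lagrangian estimate $g(\bar{y}) \leq g(y^{*}) \leq \phi(\bar{x}) + (y^{*})^T(A\bar{x}-b)$ with Cauchy--Schwarz, and \eqref{eq:feasibility} by solving the resulting quadratic inequality in $\mathcal{F}(\bar{x})$ (which the paper leaves as ``a few simple calculations''). The only cosmetic difference is that your detour through strong duality, $g(y^{*}) = g^{*} = \phi^{*} \geq g(\bar{y})$, is unnecessary: $g(\bar{y}) \leq \max_y g(y) = g(y^{*})$ already suffices, as in the paper.
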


\begin{proof}
From the estimates \eqref{eq:g_approx} and \eqref{eq:psi_approx} we have
\begin{equation}
\phi(\bar{x}) - g(\bar{y}) \leq f(\bar{x}; \beta_2) - g(\bar{y}; \beta_1) + \beta_1D_X - \frac{1}{2\beta_2}\norm{A\bar{x} - b}^2.
\end{equation}
Then, by using \eqref{eq:excessive_gap_inexact}, the last inequality implies the right-hand side of \eqref{eq:dual_gap}.
Next, for a given $y^{*}\in Y^{*}$ we  have $g(\bar{y})  \leq \max_yg(y) = g(y^{*}) = \min_{x\in X}\left\{\phi(x) +
(y^{*})^T(Ax-b)\right\} \leq \phi(\bar{x}) +
(y^{*})^T(A\bar{x}-b) \leq \phi(\bar{x}) + \norm{y^{*}}\norm{A\bar{x} - b}$. Thus we obtain the left-hand side of \eqref{eq:dual_gap}.
Finally, the estimate \eqref{eq:feasibility} follows from \eqref{eq:dual_gap} after a few simple calculations.
\Eproof
\end{proof}

Let us define $R_{Y^{*}} :=  \max_{y^{*}\in Y^{*}}\norm{y^{*}}$ the diameter of $Y^{*}$.
Since $Y^{*}$ is bounded, we have $0 \leq R_{Y^{*}} < +\infty$. The estimates \eqref{eq:feasibility} and \eqref{eq:dual_gap} can be simplified as
\begin{eqnarray}\label{eq:simplified_feasibility_gap}
\mathcal{F}(\bar{x}) \!\leq\! 2\beta_2R_{Y^{*}} \!+\! \sqrt{2(\beta_1\beta_2D_X \!+\! \delta\beta_2)} ~~~ \textrm{and}~~
-R_{Y^{*}}\mathcal{F}(\bar{x}) \!\leq\! \phi(\bar{x}) \!-\! g(\bar{y}) \leq \beta_1D_X \!+\! \delta.
\end{eqnarray}

\section{Inexact decomposition algorithm with one primal step and two dual steps}\label{sec:decomp_alg}
In this section we first show that, for a given $\delta_0 \geq 0$, there exists a point $(\bar{x}^0, \bar{y}^0)\in X\times\mathbb{R}^m$ such that the
condition \eqref{eq:excessive_gap_inexact} is
satisfied. Then, we propose a decomposition scheme to update successively a sequence $\{(\bar{x}^k,\bar{y}^k)\}_{k\geq 0}$ that
maintains the condition \eqref{eq:excessive_gap_inexact} while it drives the sequences of smoothness parameters $\{\beta_1^k\}_{k\geq 0}$ and
$\{\beta_2^k\}_{k\geq 0}$ to zero.

Let us introduce the following quantities
\begin{equation}\label{eq:accuracy_constants}
\begin{cases}
\varepsilon_{[\sigma]} &:= \left[\sum_{i=1}^M\sigma_i\varepsilon_i^2\right]^{1/2},\\
D_{\sigma} &:= \left[2\sum_{i=1}^M\frac{D_{X_i}}{\sigma_i}\right]^{1/2},\\
C_d                  &:=  \norm{A}^2D_{\sigma} + \norm{A^T(Ax^c - b)},\\
L_{\mathrm{A}}     &:=  M\max\set{\frac{\norm{A_i}^2}{\sigma_i}~|~1\leq i\leq M}.
\end{cases}
\end{equation}
From \eqref{eq:accuracy_constants}  we see that the constant $C_d$ depends on the data of the problem (i.e. $A$, $D_{X}$, $\sigma$, $b$ and $x^c$). Moreover,
$\varepsilon_{[1]} = \norm{\mathbf{\varepsilon}}$. If we choose the accuracy  $\varepsilon_i = \hat{\varepsilon} \geq 0$ for all $i=1,\dots, M$, then
$\varepsilon_{[1]} = \sqrt{M}\hat{\varepsilon}$ and $\varepsilon_{[\sigma]} =
[\sum_{i=1}^M\sigma_i]^{1/2}\hat{\varepsilon}$.

\vskip0.1cm
\noindent\textbf{4.1. Finding a starting point.}
For a given a positive value $\beta_1 > 0$, let $(\bar{x}^0, \bar{y}^0)$ be a point in $X\times\mathbb{R}^m$ computed as
\begin{equation}\label{eq:start_point}
\begin{cases}
\bar{x}^0 &:= \tilde{x}^{*}(0^m; \beta_1),\\
\bar{y}^0 &:=  L^{g}(\beta_1)^{-1}(A\bar{x}^0 - b),
\end{cases}
\end{equation}
where $0^m\in\mathbf{R}^m$ is the origin and $\tilde{x}^{*}(0^m; \beta_1)$ is defined by \eqref{eq:ap_smoothed_gi_sol} and $L^g(\beta_1)$ is given by
\eqref{eq:Lg_beta}. The following lemma shows that $(\bar{x}^0,
\bar{y}^0)$ satisfies the $\delta_0$-excessive gap condition \eqref{eq:excessive_gap_inexact}. The proof of this lemma
is given later in Appendix \ref{sec:proofs}.

\begin{lemma}\label{le:init_point}
The point $(\bar{x}^0, \bar{y}^0)\in X\times\mathbb{R}^m$  generated by \eqref{eq:start_point} satisfies the $\delta_0$-excessive gap condition
\eqref{eq:excessive_gap_inexact} w.r.t. $\beta_1$ and $\beta_2$ provided that
\begin{equation}\label{eq:init_point_cond}
\beta_1\beta_2 \geq L_{\mathrm{A}},
\end{equation}
where $\delta_0 := \beta_1\left(\frac{C_d}{\bar{L}_{\mathrm{A}}}\varepsilon_{[1]} + \frac{1}{2}\varepsilon^2_{[\sigma]}\right) \geq 0$.
\end{lemma}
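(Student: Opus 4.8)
The plan is to verify the $\delta_0$-excessive gap inequality $f(\bar{x}^0;\beta_2) \le g(\bar{y}^0;\beta_1) + \delta_0$ directly, by lower-bounding the right-hand side through the concavity/Lipschitz-gradient estimate \eqref{eq:g_Lipschitz_bound} for $g(\cdot;\beta_1)$ and expressing the left-hand side via the explicit formula \eqref{eq:explicit_psi_fun} for $\psi$. I would first abbreviate the residuals $r := A\bar{x}^0 - b$ and $r^{*} := Ax^{*}(0^m;\beta_1) - b = \nabla_yg(0^m;\beta_1)$, so that by \eqref{eq:start_point} the chosen dual point is $\bar{y}^0 = L^g(\beta_1)^{-1}r$, built from the \emph{approximate} gradient, while the \emph{true} gradient at the origin is $r^{*}$. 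Applying \eqref{eq:g_Lipschitz_bound} with $\tilde{y}=0^m$, $y=\bar{y}^0$ and substituting these, then using $(r^{*})^Tr = \norm{r}^2 + (r^{*}-r)^Tr$, gives
\[
g(\bar{y}^0;\beta_1) \ge g(0^m;\beta_1) + \frac{1}{2L^g(\beta_1)}\norm{r}^2 + \frac{1}{L^g(\beta_1)}(r^{*}-r)^Tr .
\]

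Next I would pin down $g(0^m;\beta_1)$. It equals $h(x^{*}(0^m;\beta_1);0^m,\beta_1) = \phi(x^{*}) + \beta_1 p_X(x^{*})$, and summing the inexactness bound \eqref{eq:inexactness_xi} over $i$ at $y=0^m$ (where $\varepsilon_{[\sigma]}^2 = \sum_i\sigma_i\varepsilon_i^2$) yields
\[
g(0^m;\beta_1) \ge \phi(\bar{x}^0) + \beta_1 p_X(\bar{x}^0) - \tfrac{\beta_1}{2}\varepsilon_{[\sigma]}^2 \ge \phi(\bar{x}^0) - \tfrac{\beta_1}{2}\varepsilon_{[\sigma]}^2 ,
\]
using $p_X \ge 0$. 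Combining this with $f(\bar{x}^0;\beta_2) = \phi(\bar{x}^0) + \tfrac{1}{2\beta_2}\norm{r}^2$ from \eqref{eq:smoothed_phi}--\eqref{eq:explicit_psi_fun}, the terms $\phi(\bar{x}^0)$ cancel and it remains to show
\[
\Big(\tfrac{1}{2\beta_2}-\tfrac{1}{2L^g(\beta_1)}\Big)\norm{r}^2 + \tfrac{\beta_1}{2}\varepsilon_{[\sigma]}^2 - \tfrac{1}{L^g(\beta_1)}(r^{*}-r)^Tr \le \delta_0 .
\]
The quadratic term is nonpositive precisely because the hypothesis $\beta_1\beta_2 \ge L_{\mathrm{A}} \ge \sum_i\norm{A_i}^2/\sigma_i$ forces $L^g(\beta_1)\le\beta_2$; this is where condition \eqref{eq:init_point_cond} is consumed. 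For the cross term I would write $-(r^{*}-r)^Tr = (\bar{x}^0 - x^{*})^TA^T(A\bar{x}^0 - b)$ and apply Cauchy--Schwarz together with $\norm{\bar{x}^0 - x^{*}} \le \varepsilon_{[1]}$, which follows from \eqref{eq:ap_xi_star_sol}.

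The main obstacle, and the only genuinely nontrivial estimate, is bounding $\norm{A^T(A\bar{x}^0 - b)}$ by the data constant $C_d$. I would split $A^T(A\bar{x}^0-b) = A^TA(\bar{x}^0 - x^c) + A^T(Ax^c - b)$ and control $\norm{\bar{x}^0 - x^c}$ via the strong convexity of each $p_{X_i}$: since $x_i^c$ minimizes $p_{X_i}$, one has $\tfrac{\sigma_i}{2}\norm{\bar{x}_i^0 - x_i^c}^2 \le p_{X_i}(\bar{x}_i^0) \le D_{X_i}$, whence $\norm{\bar{x}^0 - x^c}^2 \le \sum_i 2D_{X_i}/\sigma_i = D_\sigma^2$. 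This gives $\norm{A^T(A\bar{x}^0-b)} \le \norm{A}^2 D_\sigma + \norm{A^T(Ax^c-b)} = C_d$ from \eqref{eq:accuracy_constants}. Using the identity $\beta_1 L^g(\beta_1) = \sum_i\norm{A_i}^2/\sigma_i = \bar{L}_{\mathrm{A}}$, the cross term is then at most $\varepsilon_{[1]}C_d/L^g(\beta_1) = \beta_1 C_d\varepsilon_{[1]}/\bar{L}_{\mathrm{A}}$, and adding $\tfrac{\beta_1}{2}\varepsilon_{[\sigma]}^2$ reproduces exactly $\delta_0 = \beta_1\big(\tfrac{C_d}{\bar{L}_{\mathrm{A}}}\varepsilon_{[1]} + \tfrac{1}{2}\varepsilon_{[\sigma]}^2\big)$, closing the argument.
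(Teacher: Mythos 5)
Your proof is correct and takes essentially the same route as the paper's: the concavity/Lipschitz bound \eqref{eq:g_Lipschitz_bound} at $\tilde{y}=0^m$, the inexactness estimate \eqref{eq:inexactness_xi} to pass from $x^{*}(0^m;\beta_1)$ to $\bar{x}^0$, condition \eqref{eq:init_point_cond} used exactly where you use it (to force $L^g(\beta_1)\leq \beta_2$ and kill the quadratic term), and the same Cauchy--Schwarz/$D_{\sigma}$/$C_d$ machinery for the cross term --- you merely write the dual quadratic maximizations in closed form rather than as max-expressions. If anything, your reading of the otherwise undefined constant $\bar{L}_{\mathrm{A}}$ as $\sum_{i=1}^M\norm{A_i}^2/\sigma_i = \beta_1 L^g(\beta_1)$ makes the final bound an exact identity, which is tidier than the paper's replacement of $L^g(\beta_1)^{-1}$ by $\beta_1 L_{\mathrm{A}}^{-1}$.
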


Note that if we use $x^{*}(0^m; \beta_1)$ instead of $\tilde{x}^{*}(0^m; \beta_1)$ into \eqref{eq:start_point}, i.e. the exact solution $x^{*}(0^m;
\beta_1)$ is used, then $(\bar{x}^0, \bar{y}^0)$ satisfies the $0$-excessive gap condition \eqref{eq:excessive_gap_inexact}.

\vskip0.1cm
\noindent\textbf{4.2. The inexact main iteration with one primal step and two dual steps.}
Let  us assume that $(\bar{x}, \bar{y})$ is a given point in $X\times\mathbb{R}^m$ that satisfies the $\delta$-excessive gap condition
\eqref{eq:excessive_gap_inexact} w.r.t. $\beta_1$, $\beta_2$ and $\delta$. The aim is to compute a new point $(\bar{x}^{+}, \bar{y}^{+})$ such that the
condition \eqref{eq:excessive_gap_inexact} holds for the new values $\beta_1^{+}$, $\beta_2^{+}$ and $\delta_{+}$ with $\beta_1^{+} < \beta_1$, $\beta_2^{+}
< \beta_2$ and $\delta_{+}\leq \delta$.

First, for a given $y$ and $\beta_1 > 0$, we define the following mapping
\begin{equation*}
\widetilde{G}^{*}(y; \beta_1) := \mathrm{arg}\!\!\max_{v\in\mathbb{R}^m}\left\{ \widetilde{\nabla}_yg(y; \beta_1)^T(v - y) - \frac{L^g(\beta_1)}{2}\norm{v
-y}^2\right\},
\end{equation*}
where $\widetilde{\nabla}_yg(y; \beta_1)$ is  defined by \eqref{eq:app_deriv_g} and $L^g(\beta_1)$ is the Lipschitz constant.
Since this maximization problem is unconstrained and convex, we can show that the quantity $\widetilde{G}^{*}(y; \hat{x}, \beta_1)$ can
be computed explicitly as
\begin{equation}\label{eq:gradient_mapping_ex}
\widetilde{G}^{*}(y; \beta_1) := y + L^g(\beta_1)^{-1}\widetilde{\nabla}_yg(y;\beta_1) = y + L^g(\beta_1)^{-1}\left[A\tilde{x}^{*}(y; \beta_1) -
b\right].
\end{equation}
Next, the main scheme to update $(\bar{x}^{+}, \bar{y}^{+})$ is presented as
\begin{equation}\label{eq:alg_scheme}
(\bar{x}^{+}, \bar{y}^{+}) := \mathcal{S}^d(\bar{x}, \bar{y}, \beta_1, \beta_2, \tau)\Leftrightarrow
\left\{
\begin{array}{lll}
\hat{y}     &:= (1-\tau)\bar{y} \!+\! \tau y^{*}(\bar{x}; \beta_2)\\
\bar{x}^{+} &:= (1-\tau)\bar{x} \!+\! \tau \tilde{x}^{*}(\hat{y}; \beta_1)\\
\bar{y}^{+} &:= \widetilde{G}^{*}(\hat{y}; \beta_1).
\end{array}
\right.
\end{equation}
Here, the smoothness parameters $\beta_1$  and $\beta_2$ and the step size $\tau \in (0, 1)$ will be appropriately updated to obtain $\beta_1^{+}$,
$\beta_2^{+}$ and $\tau_{+}$, respectively. Note that line 1 and line 3 in \eqref{eq:alg_scheme} are simply matrix-vector multiplications, which can be computed
distributively based on the structure of the coupling constraints and can be expressed as
\begin{equation*}
\hat{y} := (1-\tau)\bar{y} + \tau\beta_2^{-1}(A\bar{x}-b) ~~\textrm{and}~~ \bar{y}^{+} := \hat{y} \!+\! L^g(\beta_1)^{-1}(A\tilde{x}^{*}(\hat{y};\beta_1) - b).
\end{equation*}
Only line 2 in \eqref{eq:alg_scheme} requires one to solve $M$ convex primal subproblems up to a given accuracy. However, this  can be done in
\textit{parallel}.

Let us define
\begin{equation}\label{eq:alpha_quatity}
\alpha^{*} := \frac{p_X^{*}}{D_X} ~~~\textrm{and}~~~ \tilde{\alpha} := \frac{p_X(\tilde{x}^{*}(\hat{y};\beta_1))}{D_X}.
\end{equation}
Then, by Assumption \aref{as:A2}, we can see that $0 < \alpha^{*} \leq \tilde{\alpha} \leq 1$.
We consider an update rule for $\beta_1$ and $\beta_2$ as
\begin{equation}\label{eq:beta_update}
\beta_1^{+} := (1-\tilde{\alpha}\tau)\beta_1 ~~\textrm{and} ~~\beta_2^{+} := (1-\tau)\beta_2.
\end{equation}
In order to show that $(\bar{x}^{+}, \bar{y}^{+})$ satisfies the  $\delta_{+}$-excessive gap condition \eqref{eq:excessive_gap_inexact}, where
$\delta_{+}$ will be defined later, we define the following function
\begin{eqnarray}\label{eq:eta_func_def}
\eta(\tau, \beta_1, \beta_2, \bar{y}, \varepsilon) := \frac{\tau\beta_1}{2}\varepsilon_{[\sigma]}^2 + \left[\frac{\beta_1}{L_{\mathrm{A}}}C_d +
(1-\tau)\tau\left( \frac{C_d}{\beta_2} + \norm{A}\norm{\bar{y}}\right)\right]\varepsilon_{[1]},
\end{eqnarray}
where $\varepsilon_{[\sigma]}, C_d$ and $L_{\mathrm{A}}$ are defined in \eqref{eq:accuracy_constants}.

The next theorem provides a condition such that $(\bar{x}^{+}, \bar{y}^{+})$  generated by \eqref{eq:alg_scheme} satisfies the $\delta_{+}$-excessive gap
condition \eqref{eq:excessive_gap_inexact}.
For clarity of the exposition we move the proof of this theorem to Appendix \ref{sec:proofs}.

\begin{theorem}\label{th:alg_scheme}
Suppose that  Assumptions \aref{as:A1} and \aref{as:A2} are satisfied.
Let $(\bar{x}, \bar{y})\in X\times\mathbb{R}^m$ be a point satisfying the $\delta$-excessive gap condition \eqref{eq:excessive_gap_inexact} w.r.t. two
values $\beta_1$ and $\beta_2$. Then if the parameter $\tau$ is chosen such that $\tau\in (0, 1)$ and
\begin{equation}\label{eq:alg_scheme_cond}
\beta_1\beta_2 \geq \frac{\tau^2}{1-\tau}L_{\mathrm{A}},
\end{equation}
then the new point $(\bar{x}^{+}, \bar{y}^{+})$ generated by \eqref{eq:alg_scheme} is in $X\times\mathbb{R}^m$ and maintains the
$\delta_{+}$-excessive gap condition \eqref{eq:excessive_gap_inexact} w.r.t two new values $\beta_1^{+}$ and $\beta^{+}_2$ defined by \eqref{eq:beta_update},
where $\delta_{+} := (1-\tau)\delta + \eta(\tau, \beta_1, \beta_2, \bar{y}, \varepsilon) $ with $\eta(\cdot)$ defined by \eqref{eq:eta_func_def}.
\end{theorem}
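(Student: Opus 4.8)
The statement is the inexact analogue of Nesterov's excessive-gap maintenance step, so the plan is to verify \eqref{eq:excessive_gap_inexact} at the new point directly: upper-bound $f(\bar{x}^{+};\beta_2^{+})$, lower-bound $g(\bar{y}^{+};\cdot)$, and match the two. I abbreviate $u := y^{*}(\bar{x};\beta_2)$, $x^{+} := \tilde{x}^{*}(\hat{y};\beta_1)$, $w := \widetilde{\nabla}_yg(\hat{y};\beta_1) = Ax^{+} - b$, $G := \nabla_yg(\hat{y};\beta_1)$ and $L := L^g(\beta_1)$, so that $\bar{y}^{+} = \hat{y} + L^{-1}w$ by \eqref{eq:gradient_mapping_ex}. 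Feasibility is immediate: $\bar{x}^{+}\in X$ as a convex combination of $\bar{x}\in X$ and $x^{+}\in X$ (the latter by part (a) of Definition \ref{de:inexactness} and convexity of $X$), while $\bar{y}^{+}\in\R^m$ trivially.

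First I would expand the primal term $f(\bar{x}^{+};\beta_2^{+}) = \phi(\bar{x}^{+}) + \psi(\bar{x}^{+};\beta_2^{+})$. Convexity of $\phi$ gives $\phi(\bar{x}^{+})\le (1-\tau)\phi(\bar{x}) + \tau\phi(x^{+})$, and the explicit quadratic form \eqref{eq:explicit_psi_fun} of $\psi$ together with the choice $\beta_2^{+} = (1-\tau)\beta_2$ yields, after writing $A\bar{x}^{+} - b = (1-\tau)\beta_2 u + \tau w$ and expanding the square, the key identity
\[ \psi(\bar{x}^{+};\beta_2^{+}) = (1-\tau)\psi(\bar{x};\beta_2) + \tau u^Tw + \tfrac{\tau^2}{2(1-\tau)\beta_2}\norm{w}^2. \]
Combined with the hypothesis $f(\bar{x};\beta_2)\le g(\bar{y};\beta_1) + \delta$, this reduces the estimate to bounding $(1-\tau)g(\bar{y};\beta_1) + \tau\phi(x^{+}) + \tau u^Tw$, modulo the $\tfrac{\tau^2}{2(1-\tau)\beta_2}\norm{w}^2$ and $(1-\tau)\delta$ terms.

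The engine is the (inexact) subproblem identity: summing \eqref{eq:inexactness_xi} over $i$ gives $\phi(x^{+}) + \hat{y}^Tw + \beta_1p_X(x^{+}) \le g(\hat{y};\beta_1) + \tfrac{\beta_1}{2}\varepsilon_{[\sigma]}^2$, which produces the $\tfrac{\tau\beta_1}{2}\varepsilon_{[\sigma]}^2$ term of $\eta$ and eliminates $\tau\phi(x^{+})$. Using $\hat{y} = (1-\tau)\bar{y} + \tau u$ (so $u - \hat{y} = (1-\tau)(u-\bar{y})$ and $\bar{y} - \hat{y} = -\tau(u-\bar{y})$) and concavity of $g(\cdot;\beta_1)$ at $\hat{y}$ with the exact gradient $G$, the symmetric cross terms collapse up to the residual $\tau(1-\tau)(w-G)^T(u-\bar{y})$. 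Next I would lower-bound $g(\bar{y}^{+};\beta_1)$ by \eqref{eq:g_Lipschitz_bound} applied to $\bar{y}^{+} = \hat{y} + L^{-1}w$, giving $g(\bar{y}^{+};\beta_1)\ge g(\hat{y};\beta_1) + \tfrac{1}{2L}\norm{w}^2 - L^{-1}(w-G)^Tw$. Condition \eqref{eq:alg_scheme_cond} is exactly what forces $\tfrac{\tau^2}{2(1-\tau)\beta_2}\le\tfrac{1}{2L}$ (using $L = \bar{L}_{\mathrm{A}}/\beta_1$ with $\bar{L}_{\mathrm{A}} := \sum_i\norm{A_i}^2/\sigma_i\le L_{\mathrm{A}}$), so the two $\norm{w}^2$ contributions cancel and none survives. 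Finally \eqref{eq:g_beta1_Lipschitz_bound} applied with $(\beta_1,\beta_1^{+})$ gives $g(\bar{y}^{+};\beta_1)\le g(\bar{y}^{+};\beta_1^{+}) + (\beta_1-\beta_1^{+})D_X$, and the update $\beta_1^{+} = (1-\tilde{\alpha}\tau)\beta_1$ with $\tilde{\alpha}D_X = p_X(x^{+})$ makes $(\beta_1-\beta_1^{+})D_X = \tau\beta_1p_X(x^{+})$ cancel precisely the penalty term $-\tau\beta_1p_X(x^{+})$ generated above.

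What remains, and the step needing the most care, is bounding the two gradient-error residuals $L^{-1}(w-G)^Tw$ and $\tau(1-\tau)(w-G)^T(u-\bar{y})$ by data constants while keeping each free of the uncontrolled quantity $\norm{w}$. Here I would use $\norm{x^{+} - x^{*}(\hat{y};\beta_1)}\le\varepsilon_{[1]}$ from \eqref{eq:ap_xi_star_sol}, rewrite each inner product as $(x^{+}-x^{*})^TA^T(\cdot)$, and bound $\norm{A^T(Ax^{+}-b)}$ and $\norm{A^Tu} = \beta_2^{-1}\norm{A^T(A\bar{x} - b)}$ by $C_d$; the latter follows from $\norm{x-x^c}\le D_{\sigma}$ for every $x\in X$, itself a consequence of the strong convexity of $p_X$ and Assumption \aref{as:A2}. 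These give $\tfrac{\beta_1}{\bar{L}_{\mathrm{A}}}C_d\varepsilon_{[1]}$ and $\tau(1-\tau)\big(\tfrac{C_d}{\beta_2} + \norm{A}\norm{\bar{y}}\big)\varepsilon_{[1]}$, which assemble with $\tfrac{\tau\beta_1}{2}\varepsilon_{[\sigma]}^2$ into $\eta(\tau,\beta_1,\beta_2,\bar{y},\varepsilon)$; together with the $(1-\tau)\delta$ carried from the hypothesis this yields $f(\bar{x}^{+};\beta_2^{+})\le g(\bar{y}^{+};\beta_1^{+}) + (1-\tau)\delta + \eta$, as claimed. The main obstacle is thus purely the bookkeeping of the inexactness errors and the correct identification of $C_d$, not any single deep inequality; I would also note that the natural constant in the first bracket of $\eta$ is $L^g(\beta_1)^{-1} = \beta_1/\bar{L}_{\mathrm{A}}$, matching the constant appearing in Lemma \ref{le:init_point}.
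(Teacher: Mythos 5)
Your proof is correct and follows essentially the same route as the paper's own argument in Appendix \ref{sec:proofs}: the same ingredients (convexity of $\phi$ plus the excessive-gap hypothesis, the summed inexactness bound \eqref{eq:inexactness_xi}, concavity and the Lipschitz estimate \eqref{eq:g_Lipschitz_bound} at the dual gradient step, Lemma \ref{le:g_beta_1} with the update \eqref{eq:beta_update}, and condition \eqref{eq:alg_scheme_cond} to absorb the $\norm{w}^2$ contributions), with your explicit completing-the-square identity for $\psi(\bar{x}^{+};\beta_2^{+})$ merely replacing the paper's device of carrying the maximization over $y$ and substituting $u = \bar{y} + \tau(y-\bar{y})$, so that the residual error terms and their bounds via $C_d$, $D_{\sigma}$ and $\varepsilon_{[1]}$ come out identical. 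Your closing remark is also well taken: the constant the argument genuinely supports in $\eta$ is $L^g(\beta_1)^{-1}C_d = \beta_1 C_d/\bar{L}_{\mathrm{A}}$ with $\bar{L}_{\mathrm{A}} := \sum_{i=1}^M\norm{A_i}^2/\sigma_i \leq L_{\mathrm{A}}$, whereas \eqref{eq:eta_func_def} and the paper's own step \eqref{eq:th31_proof6c} write $\beta_1 C_d/L_{\mathrm{A}}$ --- a substitution in the wrong direction --- so your slightly larger $\eta$ is in fact what both proofs establish.
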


\vskip0.1cm
\noindent\textbf{4.3. The step size update rule.}
Next, we show  how to update the step size $\tau \in (0, 1)$. Indeed, from \eqref{eq:alg_scheme_cond} we have
$\beta_1\beta_2 \geq \frac{\tau^2}{1-\tau}L_{\mathrm{A}}$. By combining this inequality and \eqref{eq:beta_update} we have $\beta_1^{+}\beta_2^{+} =
(1-\tau)(1-\tilde{\alpha}\tau)\beta_1\beta_2 \geq (1-\tilde{\alpha}\tau)\tau^2L_{\mathrm{A}}$. In order to ensure $\beta_1^{+}\beta_2^{+} \geq
\frac{\tau_{+}^2}{1-\tau_{+}}L_{\mathrm{A}}$ we require $(1-\tilde{\alpha}\tau)\tau^2 \geq  \frac{\tau_{+}^2}{1-\tau_{+}}$.
Since $\tau, \tau_{+} \in (0, 1)$ and $\tilde{\alpha} \in (0, 1]$, we have
\begin{equation*}
0 < \tau_{+} \leq 0.5\tau\left\{ \left[(1-\tilde{\alpha}\tau)^2\tau^2 + 4(1-\tilde{\alpha}\tau)\right]^{1/2} - (1-\tilde{\alpha}\tau)\tau\right\} < \tau.
\end{equation*}
Hence, if we  choose $\tau_{+} = 0.5\tau\left[ [(1-\tilde{\alpha}\tau)^2\tau^2 + 4(1-\tilde{\alpha}\tau)]^{1/2} - (1-\tilde{\alpha}\tau)\tau\right]$ then we
obtain the tightest rule for updating $\tau$.
Based on the  above analysis, we eventually define a sequence $\{\tau_k\}_{k\geq 0}$ as follows:
\begin{equation}\label{eq:step_size_update}
\tau_{k+1} := \frac{\tau_k}{2}\left\{\left[(1-\tilde{\alpha}_k\tau_k)^2\tau_k^2 + 4(1-\tilde{\alpha}_k\tau_k)\right]^{1/2} -
(1-\tilde{\alpha}_k\tau_k)\tau_k\right\}, ~ \forall k\geq 0,
\end{equation}
where $\tau_0 \in (0, 1)$ is given and $\tilde{\alpha}_k := p_X(\tilde{x}^{*}((\hat{y})^k;\beta_1^k))/D_X \in [\alpha^{*}, 1]$.

The following lemma provides the convergence rate of the sequence $\set{\tau_k}$, whose proof can be found in Appendix \ref{sec:proofs}.

\begin{lemma}\label{le:choice_of_tau}
Suppose that Assumption  \aref{as:A2} is satisfied. Let $\{\tau_k\}_{k\geq 0}$ be a sequence generated by \eqref{eq:step_size_update} for a given $\tau_0$ such
that $0 < \tau_0 < [\max\{1, \alpha^{*}/(1-\alpha^{*})\}]^{-1}$. Then
\begin{equation}\label{eq:tau_bounds}
\frac{1}{k + 1/\tau_0} \leq \tau_k \leq \frac{1}{0.5(1+\alpha^{*})k + 1/\tau_0}.
\end{equation}
Moreover, the  sequences $\{\beta_1^k\}_{k\geq 0}$ and $\{\beta_2^k\}_{k\geq 0}$ generated by \eqref{eq:beta_update} satisfy
\begin{eqnarray}\label{eq:beta_bounds}
&&\frac{\gamma}{(\tau_0k + 1)^{2/(1+\alpha^{*})}}\leq \beta_1^{k+1} \leq \frac{\beta_1^0}{(\tau_0k +
1)^{\alpha^{*}}}, ~~~~~~~~~~~~ \beta_2^{k+1} \leq \frac{\beta_2^0(1-\tau_0)}{\tau_0k + 1}, \nonumber\\
[-1.5ex]\\[-1.5ex]
&& \textit{and} ~~~~~~~\beta_1^k\beta_2^{k+1} = \beta_1^0\beta_2^0\frac{(1-\tau_0)}{\tau_0^2}\tau_k^2,\nonumber
\end{eqnarray}
for a fixed positive constant $\gamma$.
\end{lemma}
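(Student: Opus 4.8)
The plan is to reduce everything to the single algebraic identity hidden in \eqref{eq:step_size_update}. First I would undo the square root: squaring and simplifying \eqref{eq:step_size_update} shows that $\tau_{k+1}$ is exactly the positive root of $t^2=(1-\tilde\alpha_k\tau_k)\tau_k^2(1-t)$, i.e.
\[
(1-\tilde\alpha_k\tau_k)\,\tau_k^2 \;=\; \frac{\tau_{k+1}^2}{1-\tau_{k+1}},
\]
which is precisely the relation imposed when $\tau_{k+1}$ was chosen to make \eqref{eq:alg_scheme_cond} tight. Passing to the reciprocals $v_k:=1/\tau_k$ turns this into $v_{k+1}^2-v_{k+1}=v_k^2/(1-\tilde\alpha_k/v_k)$; since the right-hand side exceeds $v_k^2$ and $v\mapsto v^2-v$ is increasing on $(1/2,\infty)$, one gets $v_{k+1}>v_k$, so $\{\tau_k\}$ is decreasing and $\tau_k\le\tau_0$ for all $k$. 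This monotonicity together with $\tilde\alpha_k\in[\alpha^*,1]$ from \eqref{eq:alpha_quatity} is what feeds the increment estimates.

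The core of \eqref{eq:tau_bounds} is to sandwich the increment $v_{k+1}-v_k$. For the upper bound on $\tau_k$ I would use $\tilde\alpha_k\ge\alpha^*$: writing $v_{k+1}^2-v_{k+1}=v_k^3/(v_k-\tilde\alpha_k)\ge v_k^3/(v_k-\alpha^*)$ and comparing with $v\mapsto v^2-v$ evaluated at $v_k+\tfrac12(1+\alpha^*)$, a short computation (the terms linear in $v_k$ cancel because $2\cdot\tfrac12(1+\alpha^*)-1=\alpha^*$) yields $v_{k+1}\ge v_k+\tfrac12(1+\alpha^*)$; telescoping gives $v_k\ge\tfrac12(1+\alpha^*)k+1/\tau_0$, the right inequality of \eqref{eq:tau_bounds}. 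For the lower bound on $\tau_k$ I would instead use $\tilde\alpha_k\le 1$ and show $v_{k+1}\le v_k+1$ by comparing $v_{k+1}^2-v_{k+1}$ with the same quadratic at $v_k+1$; telescoping then gives $v_k\le k+1/\tau_0$, the left inequality.

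For $\{\beta_1^k\}$ and $\{\beta_2^k\}$ I would work from \eqref{eq:beta_update}. The exact identity is cleanest: with $P_k:=\beta_1^k\beta_2^{k+1}=(1-\tau_k)\beta_1^k\beta_2^k$, the update \eqref{eq:beta_update} gives $P_{k+1}/P_k=(1-\tilde\alpha_k\tau_k)(1-\tau_{k+1})$, and by the fundamental relation this equals $\tau_{k+1}^2/\tau_k^2$; hence $P_k/\tau_k^2$ is constant, and evaluating at $k=0$ (where $P_0=\beta_1^0(1-\tau_0)\beta_2^0$) yields $\beta_1^k\beta_2^{k+1}=\beta_1^0\beta_2^0\frac{1-\tau_0}{\tau_0^2}\tau_k^2$. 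The one-sided bounds on $\beta_2^{k+1}=\beta_2^0\prod_{j=0}^k(1-\tau_j)$ and the two-sided bounds on $\beta_1^{k+1}=\beta_1^0\prod_{j=0}^k(1-\tilde\alpha_j\tau_j)$ then follow from \eqref{eq:tau_bounds} by product estimates: the exact telescoping $\prod_{j=0}^k\frac{j+1/\tau_0-1}{j+1/\tau_0}=\frac{1-\tau_0}{\tau_0 k+1}$ gives the $\beta_2$ bound, while $1-\alpha^*\tau_j\le e^{-\alpha^*\tau_j}$ (respectively $1-\tau_j$ bounded below) together with $\sum_j\tau_j$ estimated by the integral of $1/(t+1/\tau_0)$ produces the exponents $\alpha^*$ and $2/(1+\alpha^*)$, with $\gamma$ absorbing the constants.

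The main obstacle is the lower bound on $\tau_k$, i.e. $v_{k+1}-v_k\le 1$. Unlike the lower increment bound, this one is tight: evaluating the identity at $v_k+1$ shows $v_{k+1}\le v_k+1$ holds \emph{exactly} when $v_k\ge\tilde\alpha_k/(1-\tilde\alpha_k)$, so the argument genuinely requires the a priori bound $\tau_k\le(1-\alpha^*)/\alpha^*$ supplied by the hypothesis $\tau_0<[\max\{1,\alpha^*/(1-\alpha^*)\}]^{-1}$, and one must handle carefully the gap between the effective threshold $\tilde\alpha_k/(1-\tilde\alpha_k)$ and its value $\alpha^*/(1-\alpha^*)$ at the extreme $\tilde\alpha_k=\alpha^*$. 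Matching the constants in \eqref{eq:beta_bounds} to the stated exponents exactly, rather than up to a multiplicative factor, is the remaining bookkeeping.
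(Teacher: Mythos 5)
Your proposal is correct and follows essentially the same route as the paper's proof, which also passes to reciprocals (with $t_k=2/\tau_k$), squeezes the increments via $t_k+1+\tilde\alpha_k\le t_{k+1}\le t_k+2$, telescopes to get \eqref{eq:tau_bounds}, and then derives \eqref{eq:beta_bounds} from the identity $(1-\tilde\alpha_k\tau_k)(1-\tau_{k+1})=\tau_{k+1}^2/\tau_k^2$, the telescoping product for $\beta_2^k$, and $\log/\exp$ estimates (with $\sum_k\tau_k^2$ bounded) for $\beta_1^k$. The obstacle you single out---that the upper increment bound needs $1/\tau_k\ge\tilde\alpha_k/(1-\tilde\alpha_k)$ while the hypothesis only controls this threshold through $\alpha^{*}\le\tilde\alpha_k$---appears in exactly the same form in the paper (its estimates are stated for $t_k>2\max\{1,\tilde\alpha_k/(1-\tilde\alpha_k)\}$ but the induction is run with $\alpha^{*}$), so on this point your write-up is, if anything, the more explicit of the two.
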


\begin{remark}\label{re:choice_of_tau}
The estimates  \eqref{eq:tau_bounds} of Lemma \ref{le:choice_of_tau} show that the sequence $\{\tau_k\}$ converges to zero with the convergence rate
$O(\frac{1}{k})$. Consequently, by \eqref{eq:beta_bounds}, we see that the sequence $\{\beta_1^k\beta_2^k\}$ also converges to zero with the convergence
rate $O(\frac{1}{k^2})$. From \eqref{eq:init_point_cond} and \eqref{eq:alg_scheme_cond}, we can derive an initial value $\tau_0 :=
\frac{\sqrt{5}-1}{2}$.
\end{remark}

In order to choose  the accuracy for solving the primal subproblem \eqref{eq:ap_smoothed_gi_sol}, we need to analyze the formula \eqref{eq:eta_func_def}.
Let us consider a sequence $\{\eta_k\}_{k\geq 0}$ computed by
\begin{equation*}
\eta_k := \eta(\tau_k, \beta_1^k, \beta_2^k, \bar{y}^k, \mathbf{\varepsilon}^k),
\end{equation*}
where $\eta$ is given in \eqref{eq:eta_func_def}. The sequence $\{\delta_k\}_{k\geq 0}$ defined by
\begin{equation}\label{eq:delta_sequence}
\delta_{k+1} := (1-\tau_k)\delta_k + \eta_k = \delta_k + (\eta_k - \tau_k\delta_k), ~\forall k\geq 0,
\end{equation}
where $\delta_0$ is chosen \textit{a priori}, is  nonincreasing if $\eta_k \leq \tau_k\delta_k$ for all $k\geq 0$.

\begin{lemma}\label{le:accuracy_level_choice}
If the accuracy $\varepsilon^k_i$ at  the iteration $k$ of Algorithm \ref{alg:A1} below is chosen such that $0 \leq \varepsilon_i^k \leq
\bar{\varepsilon}^k :=
\frac{\tau_k\delta_k}{Q_k}$ for $i=1,\dots, M$, where
\begin{eqnarray}\label{eq:accuracy_level_choice}
Q_k := \frac{\tau_k\beta_1^k}{2}\sum_{i=1}^M\sigma_i  + \sqrt{M}\left[\frac{\beta_1^k}{L_{\mathrm{A}}}C_d +
(1-\tau_k)\tau_k\left( \frac{C_d}{\beta_2^k} + \norm{A}\norm{\bar{y}^k}\right)\right],
\end{eqnarray}
then the sequence $\{\delta_k\}_{k\geq 0}$ generated by \eqref{eq:delta_sequence} is nonincreasing.
\end{lemma}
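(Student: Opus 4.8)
The plan is to reduce the monotonicity of $\{\delta_k\}$ to a single pointwise inequality and then check that the prescribed threshold $\bar{\varepsilon}^k$ is exactly what enforces it. First I would recall the recursion \eqref{eq:delta_sequence}, namely $\delta_{k+1} = \delta_k + (\eta_k - \tau_k\delta_k)$, which shows at once that $\delta_{k+1}\le\delta_k$ holds if and only if $\eta_k \le \tau_k\delta_k$. Hence it suffices to prove that the choice $0\le\varepsilon_i^k\le\bar{\varepsilon}^k = \tau_k\delta_k/Q_k$ for all $i$ guarantees $\eta_k \le \tau_k\delta_k$ at every iteration; the nonincreasing property then follows by induction.

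Next I would substitute the definition \eqref{eq:eta_func_def} of $\eta$ at the iteration-$k$ data and bound the two accuracy-dependent quantities using $\varepsilon_i^k \le \bar{\varepsilon}^k$. Since $\varepsilon_{[\sigma]}^2 = \sum_{i=1}^M\sigma_i(\varepsilon_i^k)^2 \le (\bar{\varepsilon}^k)^2\sum_{i=1}^M\sigma_i$ and $\varepsilon_{[1]} = \big(\sum_{i=1}^M(\varepsilon_i^k)^2\big)^{1/2}\le\sqrt{M}\,\bar{\varepsilon}^k$, inserting these into $\eta_k$ yields
\[
\eta_k \le \frac{\tau_k\beta_1^k}{2}\Big(\sum_{i=1}^M\sigma_i\Big)(\bar{\varepsilon}^k)^2 + \sqrt{M}\Big[\tfrac{\beta_1^k}{L_{\mathrm{A}}}C_d + (1-\tau_k)\tau_k\big(\tfrac{C_d}{\beta_2^k} + \norm{A}\norm{\bar{y}^k}\big)\Big]\bar{\varepsilon}^k.
\]
Writing $Q_k = B_k + C_k$, where $B_k := \frac{\tau_k\beta_1^k}{2}\sum_{i=1}^M\sigma_i$ is the coefficient of $(\bar{\varepsilon}^k)^2$ and $C_k$ is the bracketed (linear) coefficient matching the definition \eqref{eq:accuracy_level_choice}, the right-hand side is exactly $B_k(\bar{\varepsilon}^k)^2 + C_k\bar{\varepsilon}^k$.

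The final step, which is also the only genuine subtlety, is to collapse the quadratic contribution into the linear one. Using $\bar{\varepsilon}^k \le 1$ (accuracy levels are taken at most one; equivalently $\tau_k\delta_k \le Q_k$) we have $(\bar{\varepsilon}^k)^2\le\bar{\varepsilon}^k$, so $B_k(\bar{\varepsilon}^k)^2 + C_k\bar{\varepsilon}^k \le (B_k + C_k)\bar{\varepsilon}^k = Q_k\bar{\varepsilon}^k = \tau_k\delta_k$ by the very definition of $\bar{\varepsilon}^k$. This gives $\eta_k\le\tau_k\delta_k$ and therefore $\delta_{k+1}\le\delta_k$ for all $k$, as claimed. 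I expect the main obstacle to be precisely this mismatch between the quadratic dependence of $\eta$ on $\varepsilon_{[\sigma]}$ and the single linear threshold $\bar{\varepsilon}^k$: one must invoke $\bar{\varepsilon}^k\le 1$ (or replace the bound by $\min\{1,\tau_k\delta_k/Q_k\}$) so that the common constant $Q_k$ can absorb both terms, since without it the identity $\eta_k\le Q_k\bar{\varepsilon}^k$ would fail. I would flag this hypothesis explicitly, as it is implicit in the statement.
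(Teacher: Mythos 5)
Your proof is correct and follows essentially the same route as the paper's: reduce monotonicity to $\eta_k \le \tau_k\delta_k$ via the recursion \eqref{eq:delta_sequence}, bound $\varepsilon_{[1]}^k \le \sqrt{M}\bar{\varepsilon}^k$ and $(\varepsilon_{[\sigma]}^k)^2 \le \big(\sum_{i=1}^M\sigma_i\big)\bar{\varepsilon}^k$, and conclude $\eta_k \le Q_k\bar{\varepsilon}^k \le \tau_k\delta_k$. Your explicit flagging of the hidden hypothesis $\bar{\varepsilon}^k \le 1$, needed to collapse the quadratic term into the linear one, is apt: the paper relies on exactly the same step, asserting $\bar{\varepsilon}^k < 1$ in its opening line without justification.
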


\begin{proof}
Since $0 \leq \varepsilon_i^k \leq \bar{\varepsilon}^k < 1$ for all $i=1,\dots, M$, we have $\varepsilon_{[1]}^k \leq \sqrt{M}\bar{\varepsilon}^k$ and
$(\varepsilon_{[\sigma]}^k)^2 \leq \left(\sum_{i=1}^M\sigma_i\right)(\bar{\varepsilon}^k)^2 \leq \left(\sum_{i=1}^M\sigma_i\right)\bar{\varepsilon}^k$. By
substituting these inequalities into \eqref{eq:eta_func_def} of $\eta$ and then using
\eqref{eq:accuracy_level_choice} and the notation $\eta_k = \eta(\tau_k, \beta_1^k, \beta_2^k, \bar{y}^k, \mathbf{\varepsilon}^k)$, we have
\begin{equation*}
\eta_k \leq Q_k\bar{\varepsilon}^k.
\end{equation*}
On the other hand, from \eqref{eq:delta_sequence}  we have $\delta_{k+1} = \delta_k + (\eta_k - \tau_k\delta_k)$ for all $k\geq
0$. Thus, $\{\delta_k\}_{k\geq 0}$ is nonincreasing if $\eta_k - \tau_k\delta_k \leq 0$ for all $k\geq 0$. If we choose $\bar{\varepsilon}^k$ such that
$Q_k\bar{\varepsilon}^k \leq \tau_k\delta_k$, i.e. $\bar{\varepsilon}^k \leq \frac{\tau_k\delta_k}{Q_k}$, then $\eta_k \leq \tau_k\delta_k$.
\Eproof
\end{proof}

From Lemma \ref{le:accuracy_level_choice} it follows that if we choose $\bar{\varepsilon}_k$  sufficiently small, then the sequence $\{(\bar{x}^k, \bar{y}^k)\}$
generated by $(\bar{x}^{k+1}, \bar{y}^{k+1}) := \mathcal{S}^d(\bar{x}^k, \bar{y}^k, \beta_1^{k+1}, \beta_2^k, \tau_k)$ maintains the $\delta_{k+1}$-excessive
gap condition \eqref{eq:excessive_gap_inexact} with $\delta_{k+1}\leq\delta_k$ for all $k$. Now, by using Lemmas \ref{le:excessive_gap} and \ref{le:init_point},
if we choose $\bar{\varepsilon}^0$ in Lemma \ref{le:accuracy_level_choice} such that $\bar{\varepsilon}^0 := \frac{\tilde{\varepsilon}}{C_0}$, where
\begin{equation}\label{eq:C0_constant}
C_0 := \beta_1^0\left(\frac{\sqrt{M}C_d}{L_{\mathrm{A}}} + \frac{1}{2}\sum_{i=1}^M\sigma_i\right),
\end{equation}
and  $\tilde{\varepsilon} \geq 0$ is a given accuracy level, then the condition \eqref{eq:excessive_gap_inexact} holds with $\delta = \tilde{\varepsilon}$.

\vskip0.1cm
\noindent\textbf{4.4. The algorithm and its convergence.}
Finally, we present the algorithm in detail and estimate its worst-case complexity. For simplicity of discussion, we fix the accuracy at one level
$\bar{\varepsilon}^k$  for all the primal subproblems. However, we can alternatively choose different accuracy for each subproblem by slightly
modifying  the theory presented in this paper.

\begin{algorithm}{\textit{Inexact decomposition algorithm with two dual steps}}\label{alg:A1}
 \newline
\noindent\textbf{Initialization:} Perform the following steps:
\begin{itemize}
\item[]\textit{Step 1}: Provide an  accuracy level $\tilde{\varepsilon} \geq 0$ for solving \eqref{eq:smoothed_gi} and a value
$\beta^0 > 0$. Set
$\tau_0 := 0.5(\sqrt{5}-1)$, $\beta_1^0 := \beta^0$ and $\beta_2^0 := \frac{L_{\mathrm{A}}}{\beta^0}$.
\item[]\textit{Step 2}: Compute $C_0$ by \eqref{eq:C0_constant}. Set $\bar{\varepsilon}^0 := \tilde{\varepsilon}/C_0$ and $\delta_0 := \tilde{\varepsilon}$.
\item[]\textit{Step 3}: Compute $\bar{x}^0$ and $\bar{y}^0$ from \eqref{eq:start_point} as $\bar{x}^0 := \tilde{x}^{*}(0^m; \beta_1^0)$ and $\bar{y}^0
:= L^g(\beta_1^0)^{-1}(A\bar{x}^0 - b)$ up to the accuracy $\bar{\varepsilon}^0$.
\end{itemize}
\noindent\textbf{Iteration: }For $k = 0, 1, 2, \dots, k_{\max}$, perform the following steps:
\begin{itemize}
\item[]\textit{Step 1}: If a given stopping criterion is satisfied then terminate.
\item[]\textit{Step 2}: Compute $Q_k$ by \eqref{eq:accuracy_level_choice}. Set $\bar{\varepsilon}^k := \tau_k\delta_k/Q_k$ and update $\delta_{k+1} :=
(1-\tau_k)\delta_k + Q_k\bar{\varepsilon}^k$.
\item[]\textit{Step 3}: Solve the primal subproblems in \eqref{eq:smoothed_gi} \textit{in parallel} up to the accuracy $\bar{\varepsilon}^k$.
\item[]\textit{Step 4}: Compute $(\bar{x}^{k+1}, \bar{y}^{k+1}) := \mathcal{S}^d(\bar{x}^k, \bar{y}^k, \beta_1^k, \beta_2^k, \tau_k)$ by \eqref{eq:alg_scheme}.
\item[]\textit{Step 5}: Compute $\tilde{\alpha}_k := p_X(\tilde{x}^{*}(\hat{y}^k; \beta_1^k))/D_X$, where $\hat{y}^k := (1-\tau_k)\bar{y}^k +
\tau_k(\beta_2^k)^{-1}(A\bar{x}^k -b)$.
\item[]\textit{Step 6}: Update $\beta_1^{k+1} := (1-\tilde{\alpha}_k\tau_k)\beta_1^k$ and $\beta_2^{k+1} := (1-\tau_k)\beta_2^k$.
\item[]\textit{Step 7}: Update $\tau_k$ as
$\tau_{k+1} := 0.5\tau_k\left\{\left[(1-\tilde{\alpha}_k\tau_k)^2\tau_k^2 + 4(1-\tilde{\alpha}_k\tau_k)\right]^{1/2} -
(1-\tilde{\alpha}_k\tau_k)\tau_k\right\}$.
\end{itemize}
\end{algorithm}

The stopping criterion of Algorithm \ref{alg:A1} at Step 1 will be discussed in Section \ref{sec:num_results}. The maximum number of iterations $k_{\max}$
provides a safeguard to prevent the algorithm from running to infinity.

The following theorem  provides the worst-case complexity estimate for Algorithm \ref{alg:A1} under Assumptions \aref{as:A1} and \aref{as:A2}.

\begin{theorem}\label{th:convergence}
Suppose that Assumptions  \aref{as:A1} and \aref{as:A2} are satisfied.
Let $\{(\bar{x}^k,\bar{y}^k)\}$ be a sequence generated by Algorithm \ref{alg:A1} after $\bar{k}$ iterations. If the accuracy level $\tilde{\varepsilon}$
in Algorithm \ref{alg:A1} is chosen such that $0 \leq \tilde{\varepsilon} \leq \frac{c_0}{0.5(\sqrt{5}-1)\bar{k} + 1}$ for some positive constant $c_0$, then
the following primal-dual gap holds
\begin{equation}\label{eq:duality_gap}
-R_{Y^{*}}\mathcal{F}(\bar{x}^{\bar{k}+1}) \leq \phi(\bar{x}^{\bar{k}+1}) - g(\bar{y}^{\bar{k} + 1}) \leq \frac{(\beta^0D_X + c_0)}{[0.5(\sqrt{5}-1)\bar{k}
+1]^{\alpha^{*}}},
\end{equation}
and the feasibility gap satisfies
\begin{equation}\label{eq:feasible_gap}
\mathcal{F}(\bar{x}^{\bar{k} + 1}) = \norm{A\bar{x}^{\bar{k} + 1} - b} \leq \frac{C_f}{0.25(\sqrt{5}-1)(1+\alpha^{*})\bar{k} + 1},
\end{equation}
where $C_f :=  (3-\sqrt{5})\frac{L_{\mathrm{A}}}{\beta^0}R_{Y^{*}} + 0.5(\sqrt{5}-1)\sqrt{L_{\mathrm{A}}(D_X + c_0/\beta^0)}$ and $R_{Y^{*}}$ is
defined by \eqref{eq:simplified_feasibility_gap}.

Consequently, the  sequence $\{(\bar{x}^k, \bar{y}^k)\}_{k\geq 0}$ generated by Algorithm \ref{alg:A1} converges to a solution $(x^{*}, y^{*})$ of
the primal and dual problems \eqref{eq:primal_cvx_prob}-\eqref{eq:dual_cvx_prob} as $k\to\infty$ and $\tilde{\varepsilon}\to 0^{+}$.
\end{theorem}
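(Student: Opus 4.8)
The plan is to show, by induction on $k$, that every iterate $(\bar{x}^k,\bar{y}^k)$ produced by Algorithm \ref{alg:A1} satisfies a $\delta_k$-excessive gap condition \eqref{eq:excessive_gap_inexact} with respect to $\beta_1^k$ and $\beta_2^k$, and then to feed the resulting triple $(\beta_1^k,\beta_2^k,\delta_k)$ into the a posteriori bounds \eqref{eq:simplified_feasibility_gap} of Lemma \ref{le:excessive_gap}. The base case $k=0$ is exactly Lemma \ref{le:init_point}: the initialization sets $\beta_1^0\beta_2^0 = \beta^0(L_{\mathrm{A}}/\beta^0) = L_{\mathrm{A}}$, which verifies the requirement \eqref{eq:init_point_cond}, while $\delta_0 = \tilde{\varepsilon}$ together with $\bar{\varepsilon}^0 := \tilde{\varepsilon}/C_0$ matches the $\delta_0$ of that lemma. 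For the inductive step, the derivation preceding \eqref{eq:step_size_update} shows that the update \eqref{eq:step_size_update} is engineered precisely so that condition \eqref{eq:alg_scheme_cond} holds at each iteration; hence Theorem \ref{th:alg_scheme} applies and propagates the excessive gap property to $(\bar{x}^{k+1},\bar{y}^{k+1})$ with the parameters updated by \eqref{eq:beta_update}.

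The second ingredient is to control $\{\delta_k\}$. Since Step~2 sets $\bar{\varepsilon}^k := \tau_k\delta_k/Q_k$ and then updates $\delta_{k+1} := (1-\tau_k)\delta_k + Q_k\bar{\varepsilon}^k$, the surrogate term collapses to $Q_k\bar{\varepsilon}^k = \tau_k\delta_k$, so the tracked sequence is constant, $\delta_{k+1}=\delta_k=\tilde{\varepsilon}$. Combined with Lemma \ref{le:accuracy_level_choice}, which guarantees the true increment satisfies $\eta_k \le Q_k\bar{\varepsilon}^k$, an easy induction shows the genuine excessive gap parameter of $(\bar{x}^k,\bar{y}^k)$ never exceeds $\tilde{\varepsilon}$. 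I would then invoke Lemma \ref{le:choice_of_tau} for the decay rates \eqref{eq:tau_bounds}--\eqref{eq:beta_bounds}, using that with $\tau_0 = 0.5(\sqrt{5}-1)$ the golden-ratio identity $\tau_0^2 = 1-\tau_0$ holds, so that the staggered product reduces to $\beta_1^k\beta_2^{k+1} = \beta_1^0\beta_2^0\tau_k^2 = L_{\mathrm{A}}\tau_k^2$.

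With these facts the two displayed estimates follow by substitution into \eqref{eq:simplified_feasibility_gap}. For the primal-dual gap I would write $\phi(\bar{x}^{\bar{k}+1}) - g(\bar{y}^{\bar{k}+1}) \le \beta_1^{\bar{k}+1}D_X + \delta_{\bar{k}+1}$, bound $\beta_1^{\bar{k}+1} \le \beta^0/(\tau_0\bar{k}+1)^{\alpha^{*}}$ by \eqref{eq:beta_bounds} and $\delta_{\bar{k}+1} \le \tilde{\varepsilon} \le c_0/(\tau_0\bar{k}+1) \le c_0/(\tau_0\bar{k}+1)^{\alpha^{*}}$, the last step using $\alpha^{*}\le 1$ and $\tau_0\bar{k}+1 \ge 1$; adding these yields the right-hand side of \eqref{eq:duality_gap}, while the lower bound $-R_{Y^{*}}\mathcal{F}(\bar{x}^{\bar{k}+1})$ is just the left half of \eqref{eq:simplified_feasibility_gap}. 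For the feasibility gap I would insert $\beta_2^{\bar{k}+1} \le \beta_2^0(1-\tau_0)/(\tau_0\bar{k}+1)$, the product bound $\beta_1^{\bar{k}+1}\beta_2^{\bar{k}+1} \le L_{\mathrm{A}}\tau_{\bar{k}}^2$, and $\tau_{\bar{k}} \le \tau_0/D$ with $D := 0.5(1+\alpha^{*})\tau_0\bar{k}+1$ into the right half of \eqref{eq:simplified_feasibility_gap}, and collapse the constants through $\beta_2^0 = L_{\mathrm{A}}/\beta^0$, $2(1-\tau_0) = 3-\sqrt{5}$ and $\tau_0 = 0.5(\sqrt{5}-1)$ to recover exactly the constant $C_f$.

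The main obstacle is this last, purely computational step: keeping the correct indices of $\beta_1$ and $\beta_2$ in the excessive gap condition for $(\bar{x}^{\bar{k}+1},\bar{y}^{\bar{k}+1})$ and checking that every constant folds into the stated $C_f$ and into the numerator $\beta^0 D_X + c_0$. The identities $\tau_0^2 = 1-\tau_0$ and $\beta_1^0\beta_2^0 = L_{\mathrm{A}}$ are what bring these constants into closed form, and any residual slack in the staggered-product and square-root estimates must be absorbed conservatively into $C_f$. Finally, to obtain convergence I would let $\bar{k}\to\infty$ and $\tilde{\varepsilon}\to 0^{+}$: the right-hand sides of \eqref{eq:duality_gap} and \eqref{eq:feasible_gap} vanish, so every limit point of the bounded sequence $\{\bar{x}^k\}$ is feasible for \eqref{eq:primal_cvx_prob} and satisfies $\phi(x^{*}) = g(y^{*})$; by weak duality and the strong duality guaranteed under Assumption \aref{as:A1}, any such limit point is primal-dual optimal, giving convergence to some $(x^{*},y^{*})\in X^{*}\times Y^{*}$.
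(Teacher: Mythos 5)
Your proposal is correct and follows essentially the same route as the paper's own (much terser) proof: establish the $\delta_k$-excessive gap condition along the iterates, note that the Step~2 update keeps $\delta_k\equiv\tilde{\varepsilon}$, and substitute the decay rates of Lemma \ref{le:choice_of_tau} into the bounds of Lemma \ref{le:excessive_gap} using $\tau_0^2=1-\tau_0$ and $\beta_1^0\beta_2^0=L_{\mathrm{A}}$. The only difference is that you make explicit the induction (via Lemma \ref{le:init_point}, Theorem \ref{th:alg_scheme} and Lemma \ref{le:accuracy_level_choice}) that the paper takes for granted, and you correctly flag that the final constant-folding into $C_f$ absorbs some slack, which is also true of the paper's own statement.
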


\begin{proof}
From Lemma  \ref{le:excessive_gap}, we have $\mathcal{F}(\bar{x}^{k+1}) \leq 2\beta_2^{k+1}R_{Y^{*}} +
\sqrt{2\beta_1^{k+1}\beta_2^{k+1}D_X} + \sqrt{2\beta_2^{k+1}\delta_{k+1}}$ and $\phi(\bar{x}^{k+1}) - g(\bar{y}^{k+1}) \leq \beta_1^{k+1}D_X + \delta_{k+1}$.
Moreover, $\delta_{k+1}\leq \delta_0 = \tilde{\varepsilon}$ due to the choice of $\delta_0$ and the update rule of $\delta_k$ at Step 2 of Algorithm
\ref{alg:A1}. By combining these inequalities and \eqref{eq:beta_bounds} and then using the definition of $C_f$ and $\tau_0 = 0.5(\sqrt{5}-1)$ we obtain
\eqref{eq:duality_gap} and \eqref{eq:feasible_gap}.
The last conclusion is a direct consequence of \eqref{eq:duality_gap} and \eqref{eq:feasible_gap}.
\Eproof
\end{proof}

The conclusions of Theorem \ref{th:alg_scheme} show that the initial accuracy of solving the primal subproblems \eqref{eq:smoothed_gi} needs to be chosen
as $O(1/k)$. Then, we have $\abs{\phi(\bar{x}^k) - g(\bar{y}^k)} = O(1/k^{\alpha^{*}})$ and $\mathcal{F}(\bar{x}^k) = O(1/k)$. Thus, if we choose the ratio
$\alpha^{*}$ such that $\alpha^{*} \to 1^{-}$ then we obtain an asymptotic convergence rate $O(1/k)$ for Algorithm \ref{alg:A1}.
We note that the accuracy of solving \eqref{eq:smoothed_gi} has to be updated at each iteration $k$ in Algorithm \ref{alg:A1}. The new
value is computed by $\bar{\varepsilon}^{k} =  \tau_k\delta_k/Q_k$ at Step 2, which is the same $O(1/k^2)$ order.

Now, we consider a particular case, where we can  get an $O(1/\varepsilon)$ worst-case complexity ($\varepsilon$ is a desired accuracy).

\begin{corollary}\label{co:worst_case_complexity}
Suppose that the smoothness parameter  $\beta_1^k$ in Algorithm \ref{alg:A1} is fixed at $\beta_1^k = \beta^0 = \sqrt{L_{\mathrm{A}}}\varepsilon_f$ for all
$k\geq 0$. Suppose further that the accuracy level $\tilde{\varepsilon}$ in Algorithm \ref{alg:A1} is chosen as $O(\varepsilon)$ and that
the sequence $\{\tau_k\}$ is updated by $\tau_{k+1} := 0.5\tau_k\left(\sqrt{\tau^2_k + 4} - \tau_k\right)$ starting from $\tau_0 := 0.5(\sqrt{5}-1)$. Then,
after $\bar{k} = \lfloor 2/\varepsilon_f\rfloor + 1$ iterations, one has
\begin{equation}\label{eq:worst_case_complexity}
\mathcal{F}(\bar{x}^{\bar{k}}) \leq C_f^0\varepsilon_f ~~\textrm{and}~~ \abs{\phi(\bar{x}^{\bar{k}}) - g(\bar{y}^{\bar{k}})} \leq C_d^0\varepsilon_f,
\end{equation}
where $C_f^0 := \sqrt{L_{\mathrm{A}}}(2R_{Y^{*}} + \sqrt{2D_X})$ and $C_d^0 := \sqrt{L_{\mathrm{A}}}\max\set{D_X, 2R_{Y^{*}}+\sqrt{2D_X}}$.
\end{corollary}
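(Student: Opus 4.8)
The plan is to treat this corollary as the degenerate $\alpha^{*}=0$ specialization of the general scheme: fixing $\beta_1^k\equiv\beta^0=\sqrt{L_{\mathrm{A}}}\varepsilon_f$ is exactly the choice $\tilde{\alpha}_k\equiv 0$ in the update \eqref{eq:beta_update}, which turns the step-size recursion \eqref{eq:step_size_update} into the stated rule $\tau_{k+1}=0.5\tau_k(\sqrt{\tau_k^2+4}-\tau_k)$. In this regime the rate bound \eqref{eq:duality_gap} of Theorem \ref{th:convergence} becomes vacuous, since its denominator exponent $\alpha^{*}$ collapses to $0$; so instead of relying on a decaying $\beta_1$, I would exploit directly that the fixed value $\beta^0=\sqrt{L_{\mathrm{A}}}\varepsilon_f$ is already $O(\varepsilon_f)$ and show that the remaining free parameter $\beta_2^{\bar{k}}$ has been driven down to the same order after $\bar{k}=\lfloor 2/\varepsilon_f\rfloor+1$ iterations.

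First I would control the step sizes. Applying Lemma \ref{le:choice_of_tau} with $\alpha^{*}=0$ (equivalently, re-deriving from $\tau_{k+1}^2/(1-\tau_{k+1})=\tau_k^2$ by setting $u_k:=1/\tau_k$, so that $u_{k+1}^2-u_{k+1}=u_k^2$ forces $0.5<u_{k+1}-u_k<1$) gives $\tau_k\le (0.5k+1/\tau_0)^{-1}$. Next I would track $\beta_2$. The key arithmetic fact is the golden-ratio identity $1-\tau_0=\tau_0^2$ for $\tau_0=0.5(\sqrt{5}-1)$, which together with $\beta_1^0\beta_2^0=\beta^0 L_{\mathrm{A}}/\beta^0=L_{\mathrm{A}}$ shows that the algorithm maintains \eqref{eq:alg_scheme_cond} with equality, $\beta_1^k\beta_2^k=\frac{\tau_k^2}{1-\tau_k}L_{\mathrm{A}}$, and hence, using $\tau_k^2/(1-\tau_k)=\tau_{k-1}^2$, that $\beta_2^{\bar{k}}=\sqrt{L_{\mathrm{A}}}\,\tau_{\bar{k}-1}^2/\varepsilon_f$. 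The count $\bar{k}=\lfloor 2/\varepsilon_f\rfloor+1$ is calibrated precisely so that $\tau_{\bar{k}-1}\le\varepsilon_f$: since $\lfloor 2/\varepsilon_f\rfloor>2/\varepsilon_f-1$ and $1/\tau_0=(\sqrt{5}+1)/2>0.5$, one gets $0.5(\bar{k}-1)+1/\tau_0>1/\varepsilon_f$, so $\tau_{\bar{k}-1}<\varepsilon_f$ and therefore $\beta_2^{\bar{k}}\le\sqrt{L_{\mathrm{A}}}\varepsilon_f=\beta_1^{\bar{k}}$.

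With both smoothness parameters bounded by $\sqrt{L_{\mathrm{A}}}\varepsilon_f$ at iteration $\bar{k}$, I would substitute into the simplified excessive-gap estimates \eqref{eq:simplified_feasibility_gap} of Lemma \ref{le:excessive_gap}, which the iterate $(\bar{x}^{\bar{k}},\bar{y}^{\bar{k}})$ satisfies with its $\delta_{\bar{k}}$. For feasibility, $\mathcal{F}(\bar{x}^{\bar{k}})\le 2\beta_2^{\bar{k}}R_{Y^{*}}+\sqrt{2(\beta_1^{\bar{k}}\beta_2^{\bar{k}}D_X+\delta_{\bar{k}}\beta_2^{\bar{k}})}\le \sqrt{L_{\mathrm{A}}}(2R_{Y^{*}}+\sqrt{2D_X})\varepsilon_f=C_f^0\varepsilon_f$ once the $\delta_{\bar{k}}$-term is absorbed; for the gap, the upper bound $\phi(\bar{x}^{\bar{k}})-g(\bar{y}^{\bar{k}})\le\beta_1^{\bar{k}}D_X+\delta_{\bar{k}}\to\sqrt{L_{\mathrm{A}}}D_X\varepsilon_f$ together with the lower bound $-R_{Y^{*}}\mathcal{F}(\bar{x}^{\bar{k}})$ give $\abs{\phi(\bar{x}^{\bar{k}})-g(\bar{y}^{\bar{k}})}\le C_d^0\varepsilon_f$, upon recognizing $C_d^0=\max\set{\sqrt{L_{\mathrm{A}}}D_X,\,C_f^0}$.

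The main obstacle I expect is the bookkeeping in the middle paragraph: correctly propagating $\beta_2$ through the $\tau$-recursion via the identity $1-\tau_0=\tau_0^2$, and verifying that the specific count $\bar{k}=\lfloor 2/\varepsilon_f\rfloor+1$ delivers the sharp $\tau_{\bar{k}-1}\le\varepsilon_f$ rather than merely $\tau_{\bar{k}-1}=O(\varepsilon_f)$. A secondary subtlety is the treatment of $\delta_{\bar{k}}$: since $\delta_{\bar{k}}\le\delta_0=\tilde{\varepsilon}=O(\varepsilon)$, I must argue $\tilde{\varepsilon}$ is taken small enough that the $\sqrt{\delta_{\bar{k}}\beta_2^{\bar{k}}}$ and additive $\delta_{\bar{k}}$ contributions stay within the leading-order constants $C_f^0,C_d^0$; and on the negative side of the gap the natural bound is $R_{Y^{*}}\mathcal{F}$, so matching the stated $C_d^0$ cleanly relies on the positive side (or the feasibility term without the $R_{Y^{*}}$ amplification) dominating.
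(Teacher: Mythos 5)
Your proposal is correct and follows essentially the same route as the paper's own proof: both use the $\alpha^{*}=0$ specialization of Lemma \ref{le:choice_of_tau} and \eqref{eq:beta_bounds} (with the golden-ratio identity $1-\tau_0=\tau_0^2$ making the factor $(1-\tau_0)/\tau_0^2$ equal to one), so that $\beta_1^0\beta_2^{\bar{k}}=L_{\mathrm{A}}\tau_{\bar{k}-1}^2\leq L_{\mathrm{A}}\varepsilon_f^2$ after the calibrated number of iterations, and then conclude via Lemma \ref{le:excessive_gap}. Your bookkeeping of the $\delta_{\bar{k}}$ contributions and of how $C_d^0$ absorbs the lower bound $-R_{Y^{*}}\mathcal{F}(\bar{x}^{\bar{k}})$ is, if anything, slightly more careful than the paper's proof, which drops these terms without comment.
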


\begin{proof}
If we assume that $\beta_1^k$ is fixed in Algorithm \ref{alg:A2} then, by the new update rule of  $\{\tau_k\}$ we have $\beta_2^{k+1}\beta_1^0 =
L_{\mathrm{A}}\tau_k^2 \leq \frac{4L_{\mathrm{A}}\tau_0^2}{(\tau_0k + 2)^2}$ due to \eqref{eq:tau_bounds} and \eqref{eq:beta_bounds} with
$\alpha^{*} = 0$. Since $\beta_1^0 = \sqrt{L_{\mathrm{A}}}\varepsilon_f$, if we choose $\bar{k} := \lfloor 2/\varepsilon_f\rfloor + 1$ then
$\frac{2\tau_0}{\tau_0(\bar{k}-1) +
2} \leq \varepsilon_f$. Furthermore, by Lemma \ref{le:excessive_gap} we have $\mathcal{F}(\bar{x}^{\bar{k}}) \leq 2\beta_2^{\bar{k}}R_{Y^{*}} +
\sqrt{2\beta_1^0\beta_2^{\bar{k}}D_X} \leq \sqrt{L_{\mathrm{A}}}(2R_{Y^{*}} + \sqrt{2D_X})\varepsilon_f$ and $-R_{Y^{*}}\mathcal{F}(\bar{x}^{\bar{k}}) \leq
\phi(\bar{x}^{\bar{k}}) - g(\bar{y}^{\bar{k}}) \leq \beta_1^0D_X = \sqrt{L_{\mathrm{A}}}D_X\varepsilon_f$. By combining these estimates, we obtain the
conclusion \eqref{eq:worst_case_complexity}.
\Eproof
\end{proof}

\begin{remark}{(\textit{Distributed implementation})}\label{re:dist_opt}
In Algorithm \ref{alg:A1}, only the parameter $\alpha_k$ requires centralized information. Instead of using $\alpha_k$, we can use its lower bound $\alpha^{*}$
to compute $\tau_k$ and $\beta_1^k$. In this case, we can modify Algorithm \ref{alg:A1} to obtain a distributed implementation.
The modification is at Steps 5, 6 and 7, where we can parallelize these steps by using the same formulas for the all subsystems to compute the parameters
$\beta_1^k$, $\beta_2^k$ and $\tau_k$.
We note that the points $\tilde{x}^{*}(\hat{y};\beta_1)$ and $\bar{x}^{k+1}$ in the scheme \eqref{eq:alg_scheme} can be computed in parallel, while
$y^{*}(\bar{x};\beta_2)$ and $\bar{y}^{+}$ can be computed distributively based on the structure of the coupling constraints of problem
\eqref{eq:primal_cvx_prob}.
\end{remark}

\section{Inexact decomposition algorithm with switching primal-dual steps}\label{sec:inexact_switching_alg}
Since the  ratio $\alpha^{*}:= \frac{p_X^{*}}{D_X}$ defined in \eqref{eq:alpha_quatity} may be small, Algorithm \ref{alg:A1} only provides a suboptimal
approximation $(\bar{x}^k, \bar{y}^k)$ to the optimal solution $(x^{*}, y^{*})$ such that $\abs{\phi(\bar{x}^k) - g(\bar{y}^k)} \leq \beta^0D_X +
\tilde{\varepsilon}$ in the worst-case. For example, if we choose the prox-function $p_X(x) := \frac{1}{2}\sum_{i=1}^M\norm{x_i - x^c_i}^2 + \alpha^{*}$, where
$\alpha_{*} \in (0, 0.5)$, then worst-case complexity of Algorithm \ref{alg:A1} is lower than subgradient methods, see \eqref{eq:duality_gap} of Theorem
\ref{th:convergence}. Algorithm \ref{alg:A1} leads to a poor performance.

In this section, we propose to combine the scheme $\mathcal{S}^d$ defined by \eqref{eq:alg_scheme} in this paper and an
inexact decomposition scheme with two primal steps and one dual step to ensure that the parameter $\beta_1$ always decreases to zero.
Apart from the inexactness, this variant allows one to update simultaneously both smoothness parameters at each iteration.

\vskip0.1cm
\noindent\textbf{5.1. The inexact main iteration with two primal steps.}
Let us consider the  approximate function $f(x;\beta_2) = \phi(x) + \psi(x;\beta_2)$ defined by \eqref{eq:smoothed_phi}. We recall that $\phi$ is only assumed
to be convex and possibly nonsmooth, while $\psi(\cdot;\beta_2)$ is convex and Lipschitz continuously differentiable. We define
\begin{equation}\label{eq:proxgrad_approx}
q_i(x_i; \hat{x}, \beta_2) \!:=\! \phi_i(x_i) \!+\! M^{-1}{\!\!}\psi(\hat{x};\beta_2) \!+\! \nabla_{x_i}{\psi}(\hat{x};\beta_2)^T(x_i \!-\! \hat{x}_i) \!+\!
\frac{L_i^{\psi}(\beta_2)}{2}\norm{x_i \!-\! \hat{x}_i}^2,
\end{equation}
and the mapping
\begin{equation}\label{eq:Pi_mapping}
P_i(\hat{x}, \beta_2) := \textrm{arg}\!\min_{x_i\in X_i} q_i(x_i; \hat{x}, \beta_2), ~~ i=1,\dots, M,
\end{equation}
where $L_i^{\psi}(\beta_2) := \frac{M\norm{A_i}^2}{\beta_2}$ is the   Lipschitz constant of $\nabla_{x_i}\psi(\cdot; \beta_2)$ defined in Lemma
\ref{le:primal_smoothed_estimates}.
Since $q_i(\cdot; \hat{x}, \beta_2)$ is strongly convex, $P_i(\hat{x}, \beta_2)$ is well-defined.

\begin{remark}\label{re:bregman_dist}
Note that we can replace the quadratic term
$\frac{L_i^{\psi}(\beta_2)}{2}\norm{x_i \!-\! \hat{x}_i}^2$ in
\eqref{eq:proxgrad_approx} by any Bregman distance as done in
\cite{Nesterov2005a}. However, the convergence analysis based on
this type of prox-functions  is more complicated than the one given
in this paper.
\end{remark}

Suppose that we can only solve the minimization problem \eqref{eq:Pi_mapping} up to a given accuracy $\varepsilon_i \geq 0$ to obtain an approximate solution
$\widetilde{P}_i(\cdot, \beta_2)$ in the sense of Definition \eqref{de:inexactness}. More precisely, $\widetilde{P}_i(\hat{x},\beta_2)\in X_i$ and
\begin{eqnarray}\label{eq:Pi_approx}
0\leq q_i(\widetilde{P}_i(\hat{x},\beta_2); \hat{x}, \beta_2) - q_i(P_i(\hat{x}, \beta_2); \hat{x}, \beta_2) \leq \frac{1}{2}L_i^{\psi}(\beta_2)\varepsilon_i^2,
~~ i=1,\dots, M.
\end{eqnarray}
We denote $P := (P_1,\dots, P_M)$ and $\widetilde{P} := (\widetilde{P}_1, \dots, \widetilde{P}_M)$.
In particular, if $\phi_i$ is differentiable and  its gradient is Lipschitz continuous with a Lipschitz constant $L^{\phi_i} > 0$ for some $i\in \{1,2,\dots,
M\}$ then one can replace the
approximate mapping $\widetilde{P}_i$ by the following one:
\begin{eqnarray*}
\widetilde{G}_i(\hat{x}, \beta_2) {\!}:\approx{\!} \textrm{arg}\!\!\min_{x_i\in X_i}{\!\!\!}\left\{\!\!
\left[\nabla\phi_i(\hat{x}_i) \!+\! \nabla_{x_i}\psi(\hat{x};\beta_2)\right]^T\!\!\!\!(x_i \!-\! \hat{x}_i) \!+\! \frac{\hat{L}_i(\beta_2)}{2}\norm{x_i -
\hat{x}_i}^2\!\!\right\},
\end{eqnarray*}
where $\hat{L}_i(\beta_2) := L^{\phi_i} + L^{\psi}_i(\beta_2)$, in the sense of Definition \ref{de:inexactness}. Note that the minimization problem defined in
$\widetilde{G}_i$ is a quadratic program with convex constraints.

Now, we can present the decomposition scheme with two primal steps in the case of inexactness as follows.
Suppose that $(\bar{x}, \bar{y})\in X\times\mathbb{R}^m$ satisfies \eqref{eq:excessive_gap_inexact} w.r.t. $\beta_1$, $\beta_2$ and $\delta$. We update
$(\bar{x}^{+}, \bar{y}^{+})\in X\times\mathbb{R}^m$ as
\begin{equation}\label{eq:p_alg_scheme}
(\bar{x}^{+}, \bar{y}^{+}) := \mathcal{S}^p(\bar{x}, \bar{y}, \beta_1, \beta_2^{+}, \tau) ~\Leftrightarrow~
\left\{
\begin{array}{ll}
\hat{x} &:= (1-\tau)\bar{x} + \tau \tilde{x}^{*}(\bar{y}; \beta_1)\\
\bar{y}^{+} &:= (1-\tau)\bar{y} + \tau y^{*}(\hat{x}; \beta_2^{+})\\
\bar{x}^{+} &:= \widetilde{P}(\hat{x}, \beta_2^{+}),
\end{array}\right.
\end{equation}
where the step size $\tau \in (0, 1)$  will be appropriately updated and
\begin{enumerate}
\item the parameters $\beta_1$ and $\beta_2$ are updated by $\beta_1^{+} := (1-\tau)\beta_1$ and $\beta_2^{+} := (1-\tau)\beta_2$;
\item $\tilde{x}^{*}(\bar{y}; \beta_1)$ is computed by \eqref{eq:ap_smoothed_gi_sol};
\item $\widetilde{P}(\cdot, \beta_2^{+})$ is an approximation of $P(\cdot,\beta_2^{+})$ defined in \eqref{eq:Pi_mapping} and \eqref{eq:Pi_approx}.
\end{enumerate}
The following theorem states that the new point $(\bar{x}^{+}, \bar{y}^{+})$ updated by $\mathcal{S}^p$ maintains the $\delta_{+}$-excessive gap condition
\eqref{eq:excessive_gap_inexact}. The proof of this theorem is postponed to Appendix \ref{sec:proofs}.

\begin{theorem}\label{th:alg_scheme2}
Suppose that Assumptions \aref{as:A1} and \aref{as:A2} are satisfied.
Let  $(\bar{x}, \bar{y})$ be a point in $X\times\mathbb{R}^m$ and satisfy the $\delta$-excessive gap condition
\eqref{eq:excessive_gap_inexact} w.r.t. two values $\beta_1$ and $\beta_2$. Then if the parameter $\tau$ is chosen such that $\tau\in (0, 1)$ and
\begin{equation}\label{eq:alg_scheme_cond_p}
\beta_1\beta_2 \geq \left(\frac{\tau}{1-\tau}\right)^2L_{\mathrm{A}},
\end{equation}
then the new points $(\bar{x}^{+}, \bar{y}^{+})$ updated by \eqref{eq:p_alg_scheme} maintains the $\delta_{+}$-excessive gap condition
\eqref{eq:excessive_gap_inexact} w.r.t. two new values $\beta_1^{+}$ and $\beta^{+}_2$,
where $\delta_{+} := (1-\tau)\delta + 2\beta_1(1-\tau)D_{\sigma}\varepsilon_{[\sigma]} + \frac{1}{2}\sum_{i=1}^ML_i^{\psi}(\beta_2^{+})\varepsilon_i^2$,
and $\varepsilon_{[\sigma]}$ and $D_{\sigma}$ are defined in \eqref{eq:accuracy_constants}.
\end{theorem}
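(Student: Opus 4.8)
The plan is to bound $f(\bar{x}^{+};\beta_2^{+})$ from above and $g(\bar{y}^{+};\beta_1^{+})$ from below, and to close the gap using the incoming $\delta$-excessive gap condition together with the step-size condition \eqref{eq:alg_scheme_cond_p}. Write $\ell(x;y) := \phi(x) + y^T(Ax-b)$ and $h(x;y,\beta_1) := \ell(x;y) + \beta_1 p_X(x)$, and set $\tilde{x} := \tilde{x}^{*}(\bar{y};\beta_1)$, $x^{*} := x^{*}(\bar{y};\beta_1)$ and $\hat{y} := y^{*}(\hat{x};\beta_2^{+}) = (\beta_2^{+})^{-1}(A\hat{x} - b)$. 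First, since each $q_i(\cdot;\hat{x},\beta_2^{+})$ majorizes $\phi_i + M^{-1}\psi(\hat{x};\beta_2^{+}) + \nabla_{x_i}\psi(\hat{x};\beta_2^{+})^T(\cdot-\hat{x}_i)$, Lemma \ref{le:primal_smoothed_estimates} gives $f(x;\beta_2^{+}) \leq \sum_i q_i(x_i;\hat{x},\beta_2^{+})$ for all $x\in X$; evaluating at $\bar{x}^{+} = \widetilde{P}(\hat{x},\beta_2^{+})$ and using the inexactness bound \eqref{eq:Pi_approx} yields $f(\bar{x}^{+};\beta_2^{+}) \leq Q(\hat{x},\beta_2^{+}) + \tfrac{1}{2}\sum_i L_i^{\psi}(\beta_2^{+})\varepsilon_i^2$, where $Q(\hat{x},\beta_2^{+}) := \min_{x\in X}\sum_i q_i(x_i;\hat{x},\beta_2^{+})$. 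Substituting the explicit forms from \eqref{eq:explicit_psi_fun} and $\nabla_{x_i}\psi(\hat{x};\beta_2^{+}) = A_i^T\hat{y}$ collapses the linear terms into $\ell$, giving $Q(\hat{x},\beta_2^{+}) = -\tfrac{\beta_2^{+}}{2}\norm{\hat{y}}^2 + \min_{x\in X}\{\ell(x;\hat{y}) + \sum_i\tfrac{L_i^{\psi}(\beta_2^{+})}{2}\norm{x_i-\hat{x}_i}^2\}$.

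For the dual lower bound, I would use $\beta_1^{+} = (1-\tau)\beta_1$ and $\bar{y}^{+} = (1-\tau)\bar{y} + \tau\hat{y}$ to split $\ell(x;\bar{y}^{+}) + \beta_1^{+} p_X(x) = (1-\tau)h(x;\bar{y},\beta_1) + \tau\ell(x;\hat{y})$, hence $g(\bar{y}^{+};\beta_1^{+}) = \min_{x\in X}\{(1-\tau)h(x;\bar{y},\beta_1) + \tau\ell(x;\hat{y})\}$. Because $h(\cdot;\bar{y},\beta_1)$ is $\beta_1\sigma_i$-strongly convex on each block $X_i$ and $x^{*}$ is its minimizer, $h(x;\bar{y},\beta_1) \geq g(\bar{y};\beta_1) + \sum_i\tfrac{\beta_1\sigma_i}{2}\norm{x_i-x_i^{*}}^2$ for all $x\in X$ (valid despite the nonsmoothness of $\phi$, by optimality of $x^{*}$ over the convex set $X$). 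This gives $g(\bar{y}^{+};\beta_1^{+}) \geq (1-\tau)g(\bar{y};\beta_1) + R_g$ with $R_g := \min_{x\in X}\{\tau\ell(x;\hat{y}) + (1-\tau)\sum_i\tfrac{\beta_1\sigma_i}{2}\norm{x_i-x_i^{*}}^2\}$. Invoking $g(\bar{y};\beta_1) \geq f(\bar{x};\beta_2) - \delta$ from \eqref{eq:excessive_gap_inexact} then reduces the theorem to the key inequality $Q(\hat{x},\beta_2^{+}) \leq (1-\tau)f(\bar{x};\beta_2) + R_g + 2\beta_1(1-\tau)D_{\sigma}\varepsilon_{[\sigma]}$.

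To prove this key inequality I would exploit that $\hat{x} = (1-\tau)\bar{x} + \tau\tilde{x}$. Restricting the minimization in $Q$ to points $x = (1-\tau)\bar{x} + \tau w$, the center shift gives $x_i - \hat{x}_i = \tau(w_i - \tilde{x}_i)$, and convexity of $\ell(\cdot;\hat{y})$ gives $\ell(x;\hat{y}) \leq (1-\tau)\ell(\bar{x};\hat{y}) + \tau\ell(w;\hat{y})$; minimizing over $w$ yields $\min_{x\in X}\{\ell(x;\hat{y}) + \sum_i\tfrac{L_i^{\psi}(\beta_2^{+})}{2}\norm{x_i-\hat{x}_i}^2\} \leq (1-\tau)\ell(\bar{x};\hat{y}) + \min_{w\in X}\{\tau\ell(w;\hat{y}) + \tau^2\sum_i\tfrac{L_i^{\psi}(\beta_2^{+})}{2}\norm{w_i-\tilde{x}_i}^2\}$. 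Here the step-size condition does exactly its job: since $L_i^{\psi}(\beta_2^{+}) = M\norm{A_i}^2/[(1-\tau)\beta_2]$, the inequality $\beta_1\beta_2 \geq (\tfrac{\tau}{1-\tau})^2 L_{\mathrm{A}}$ is equivalent to $\tau^2 L_i^{\psi}(\beta_2^{+}) \leq (1-\tau)\beta_1\sigma_i$ for every $i$, so the quadratic penalty in the last minimization is dominated block by block by that of $R_g$. In the exact case $\tilde{x} = x^{*}$ the centers coincide and this domination gives the min $\leq R_g$, whence $Q(\hat{x},\beta_2^{+}) \leq -\tfrac{\beta_2^{+}}{2}\norm{\hat{y}}^2 + (1-\tau)\ell(\bar{x};\hat{y}) + R_g$. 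Finally the elementary Fenchel bound $\hat{y}^T(A\bar{x}-b) - \tfrac{\beta_2}{2}\norm{\hat{y}}^2 \leq \tfrac{1}{2\beta_2}\norm{A\bar{x}-b}^2 = \psi(\bar{x};\beta_2)$, multiplied by $(1-\tau)$ and using $\beta_2^{+} = (1-\tau)\beta_2$, shows $-\tfrac{\beta_2^{+}}{2}\norm{\hat{y}}^2 + (1-\tau)\ell(\bar{x};\hat{y}) \leq (1-\tau)f(\bar{x};\beta_2)$, closing the key inequality in the exact case.

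The main obstacle is the bookkeeping of inexactness. Because the algorithm forms $\hat{x}$ with the approximate minimizer $\tilde{x}$ rather than the exact $x^{*}$ used in the strong-convexity bound, the two quadratic penalties in the domination step are centered at $\tilde{x}$ and $x^{*}$ respectively. I would reconcile them by expanding $\norm{w_i-\tilde{x}_i}^2$ around $x_i^{*}$, controlling the cross term with $\norm{\tilde{x}_i - x_i^{*}} \leq \varepsilon_i$ (which follows from \eqref{eq:ap_xi_star_sol}) and the block diameter bound $\norm{w_i - x_i^{*}} \leq \sqrt{2D_{X_i}/\sigma_i}$ implied by Assumption \aref{as:A2}. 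A Cauchy--Schwarz step then converts $\sum_i\beta_1\sigma_i\norm{w_i-x_i^{*}}\varepsilon_i$ into a multiple of $\beta_1 D_{\sigma}\varepsilon_{[\sigma]}$, with $D_{\sigma}$ and $\varepsilon_{[\sigma]}$ as in \eqref{eq:accuracy_constants}, producing precisely the residual $2\beta_1(1-\tau)D_{\sigma}\varepsilon_{[\sigma]}$; adding the proximal error $\tfrac{1}{2}\sum_i L_i^{\psi}(\beta_2^{+})\varepsilon_i^2$ from the first step gives the stated $\delta_{+}$. The conceptual heart is the exact-case domination above; the inexact cross-term estimate is the technical part most likely to require care with constants.
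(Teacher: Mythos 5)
Your proposal is, in substance, the paper's own proof reorganized: the splitting $\ell(x;\bar{y}^{+})+\beta_1^{+}p_X(x)=(1-\tau)h(x;\bar{y},\beta_1)+\tau\ell(x;\hat{y})$, the strong-convexity bound around $x^{*}$, the excessive-gap substitution, the restriction $x=(1-\tau)\bar{x}+\tau w$, the block-wise domination $\tau^2L_i^{\psi}(\beta_2^{+})\leq(1-\tau)\beta_1\sigma_i$ extracted from \eqref{eq:alg_scheme_cond_p}, and the final use of \eqref{eq:Pi_approx} and \eqref{eq:psi_Lipschitz_bound} are exactly the ingredients of the paper's argument, presented as a meet-in-the-middle bound instead of a single chain of inequalities; your Fenchel/Young step for the $\psi$-terms is the paper's nonnegative completed square $[\cdot]_{[3]}$ in different packaging.

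The one step that does not deliver what you claim is the inexactness bookkeeping. Writing $\norm{u}_{\sigma}^2:=\sum_{i=1}^M\sigma_i\norm{u_i}^2$, your expansion of $\norm{w-\tilde{x}}_{\sigma}^2$ around $x^{*}$ produces, besides the cross term, the summand $\norm{x^{*}-\tilde{x}}_{\sigma}^2\leq\varepsilon_{[\sigma]}^2$; since you bound $Q$ from \emph{above}, this term has the unfavorable sign and cannot be discarded, so as written your argument yields $\delta_{+}$ inflated by $\frac{(1-\tau)\beta_1}{2}\varepsilon_{[\sigma]}^2$ rather than ``precisely'' the stated residual. (The paper escapes this because it works in the reverse direction, lower-bounding $\norm{x-x^{*}}_{\sigma}^2$ by $\norm{x-\tilde{x}}_{\sigma}^2$ minus an error, where the squared term enters with a favorable sign and is simply dropped; see \eqref{eq:thm41_proof3b}.) In addition, $\sqrt{2D_{X_i}/\sigma_i}$ bounds the distance of a point of $X_i$ to the prox-center $x_i^c$, not the distance between the two arbitrary points $w_i$ and $x_i^{*}$, so your per-block constant is short by a factor of $2$. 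Both defects vanish if you estimate the difference of squares directly,
\begin{equation*}
\norm{w-\tilde{x}}_{\sigma}^2-\norm{w-x^{*}}_{\sigma}^2
=\Big(\norm{w-\tilde{x}}_{\sigma}-\norm{w-x^{*}}_{\sigma}\Big)\Big(\norm{w-\tilde{x}}_{\sigma}+\norm{w-x^{*}}_{\sigma}\Big)
\leq \varepsilon_{[\sigma]}\Big(\norm{w-\tilde{x}}_{\sigma}+\norm{w-x^{*}}_{\sigma}\Big)\leq 4D_{\sigma}\varepsilon_{[\sigma]},
\end{equation*}
using the triangle inequality for the first factor and the same bound $\norm{x-x^c}_{\sigma}\leq D_{\sigma}$ ($x\in X$) that the paper invokes in \eqref{eq:thm41_proof3b} for the second; multiplying by $\frac{(1-\tau)\beta_1}{2}$ then gives exactly the stated term $2\beta_1(1-\tau)D_{\sigma}\varepsilon_{[\sigma]}$.
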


Finally, we note that the step size $\tau$ is updated by  $\tau_{k+1} := \tau_k/(\tau_k + 1)$ for $k\geq 0$ starting from $\tau_0 := 0.5$ in the
scheme \eqref{eq:p_alg_scheme}, see \cite{TranDinh2012a} for more details.

\vskip0.1cm
\noindent\textbf{5.2. The algorithm and its convergence.}
First, we provide an update rule for $\delta$ in Definition \ref{de:exessive_gap_cond}.
With $\varepsilon_{[\sigma]}$ and $D_{\sigma}$ defined in \eqref{eq:accuracy_constants}, let us consider the function
\begin{equation}\label{eq:xi_func}
\xi(\tau, \beta_1, \beta_2, \varepsilon) := 2\beta_1(1-\tau)D_{\sigma}\varepsilon_{[\sigma]} +
\frac{1}{2}\sum_{i=1}^ML_i^{\psi}(\beta_2^{+})\varepsilon_i^2,
\end{equation}
and a sequence  $\{\delta_k\}$ generated by $\delta_{k+1} := (1-\tau_k)\delta_k + \xi(\tau_k, \beta_1^k, \beta_2^k, \varepsilon^k)$, where
$\delta_0$ is given and $\varepsilon^k$ is chosen appropriately.
The aim is to choose $\bar{\varepsilon}_k$ such that $0\leq\varepsilon_i^k\leq\bar{\varepsilon}_k$ and $\{\delta_k\}$ is nonincreasing.
By letting
\begin{equation}\label{eq:R_k_const}
R_k := 2(1-\tau_k)\beta_1^kD_{\sigma}\left(\sum_{i=0}^M\sigma_i\right)^{1/2} +  \frac{M}{2(1-\tau_k)\beta_2^k}\sum_{i=1}^M\norm{A_i}^2,
\end{equation}
Then, if we choose $\bar{\varepsilon}^k\geq 0$ such that $\bar{\varepsilon}^k \leq \frac{\tau_k\delta_k}{R_k}$ then we have $\delta_{k+1} \leq \delta_k$.

By combining both schemes \eqref{eq:alg_scheme} and \eqref{eq:p_alg_scheme}, we obtain a new variant of Algorithm \ref{alg:A1} with a switching strategy as
described as follows.

\begin{algorithm}{\textit{Inexact decomposition algorithm with switching primal-dual steps}}\label{alg:A2}
\newline
\noindent{\!\!\!\textbf{Initialization:}}
Perform as in  Algorithm \ref{alg:A1} with $\tau_0 := 0.5$.
\newline
\noindent\textbf{Iteration: }For $k=0,1,2, \dots, k_{\max}$ perform the following steps:
\begin{itemize}
\item[]\textit{Step 1}: If a given stopping criterion is satisfied then terminate.
\item[]\textit{Step 2}: If $k$ is \textit{even} then perform the scheme with two primal steps:
\begin{itemize}
\item[]2.1. Compute $R_k$ by \eqref{eq:R_k_const}. Set $\bar{\varepsilon}^k := \tau_k\delta_k/R_k$ and update $\delta_{k+1} := (1-\tau_k)\delta_k
+ R_k\bar{\varepsilon}^k$.
\item[]2.2. Update  $\beta_2^{k+1} := (1-\tau_k)\beta_2^k$.
\item[]2.3. Compute $(\bar{x}^{k+1}, \bar{y}^{k+1}) := \mathcal{S}^p(\bar{x}^k, \bar{y}^k, \beta_1^{k}, \beta_2^{k+1}, \tau_k)$ up to the accuracy
$\bar{\varepsilon}^k$.
\item[]2.4. Update  $\beta_1^{k+1} := (1-\tau_k)\beta_1^k$.
\item[]2.5. Update the step-size parameter $\tau_k$ as $\tau_{k+1} := \frac{\tau_k}{\tau_k + 1}$.
\end{itemize}
\item[]\textit{Step 3}: Otherwise, (i.e. $k$ is \textit{odd}) perform the scheme with two dual steps:
\begin{itemize}
\item[]3.1. Compute $Q_k$ by \eqref{eq:accuracy_level_choice}. Set $\bar{\varepsilon}^k := \tau_k\delta_k/Q_k$ and update $\delta_{k+1} := (1-\tau_k)\delta_k
+ Q_k\bar{\varepsilon}^k$.
\item[]3.2. Compute $(\bar{x}^{k+1}, \bar{y}^{k+1}) := \mathcal{S}^d(\bar{x}^k, \bar{y}^k, \beta_1^k, \beta_2^k, \tau_k)$ up to the accuracy
$\bar{\varepsilon}^k$.
\item[]3.3. Compute $\tilde{\alpha}_k := \frac{p_X(\tilde{x}^{*}(\hat{y}^k;\beta_1^k))}{D_X}$, where $\hat{y}^k := (1-\tau_k)\bar{y}^k +
\tau_k(\beta_2^k)^{-1}(A\bar{x}^k-b)$.
\item[]3.4. Update  $\beta_1^{k+1} := (1 - \tilde{\alpha}_k\tau_k)\beta_1^k$ and $\beta_2^{k+1} := (1-\tau_k)\beta_2^k$.
\item[]3.5. Update $\tau_k$ as $\tau_{k+1} := \frac{\tau_k}{2}\left\{\left[(1-\tilde{\alpha}_k\tau_k)^2\tau_k^2 + 4(1-\tilde{\alpha}_k\tau_k)\right]^{1/2} -
(1-\tilde{\alpha}_k\tau_k)\tau_k\right\}$.
\end{itemize}
\end{itemize}
\end{algorithm}

Note that the first line and third line of the scheme $\mathcal{S}^p$ can be parallelized. They require one to solve $M$ convex subproblems of
the form \eqref{eq:smoothed_gi} and \eqref{eq:Pi_approx}, respectively \textit{in parallel}.
If the function $\phi_i$ is differentiable and its gradient is Lipschitz continuous for some $i\in\{1,\dots, M\}$, then we can use the approximate
gradient mapping $\widetilde{G}_i$ instead of $\widetilde{P}_i$ and the corresponding minimization subproblem in the third line reduces to  a
quadratic program with convex constraints. The stopping criterion at Step 1 will be given in Section \ref{sec:num_results}.

Similar to the proof of Lemma \ref{le:choice_of_tau} we can show that the sequence $\{\tau_k\}_{k\geq 0}$ generated by Step 2.5 or Step 3.5 of Algorithm
\ref{alg:A2} satisfies  estimates \eqref{eq:tau_bounds}. Consequently, the estimate for $\beta_2^k$ in \eqref{eq:beta_bounds} is still valid, while the
parameter $\beta_1^k$ satisfies $\beta_1^{k+1} \leq \frac{\beta_1^0}{(\tau_0k + 1)^{(1+\alpha^{*})/2}}$.

Finally, we summarize the convergence results of Algorithm \ref{alg:A2} in the following theorem.

\begin{theorem}\label{th:convergence2}
Suppose that Assumptions \aref{as:A1} and \aref{as:A2}  are satisfied.
Let $\{(\bar{x}^k,\bar{y}^k)\}$ be a sequence generated by Algorithm \ref{alg:A2} after $\bar{k}$ iterations. If the accuracy level
$\tilde{\varepsilon}$ in Algorithm \ref{alg:A2} is chosen  such that $0 \leq \tilde{\varepsilon} \leq \frac{c_0}{0.5\bar{k} + 1}$ for some positive constant
$c_0$, then the following primal-dual gap holds
\begin{equation}\label{eq:duality_gap2}
-R_{Y^{*}}\mathcal{F}(\bar{x}^{\bar{k} + 1}) \leq \phi(\bar{x}^{\bar{k}+1}) - g(\bar{y}^{\bar{k}+1}) \leq \frac{\beta^0D_X + c_0}{(0.5\bar{k} +
1)^{(1+\alpha^{*})/2}},
\end{equation}
and the feasibility gap satisfies
\begin{equation}\label{eq:feasible_gap2}
\mathcal{F}(\bar{x}^{\bar{k}+1}) = \norm{A\bar{x}^{\bar{k}+1} - b} \leq \frac{C_f}{0.25(1+\alpha^{*})k + 1},
\end{equation}
where $C_f :=  \frac{L_{\mathrm{A}}R_{Y^{*}}}{\beta^0} + 0.5\sqrt{2L_{\mathrm{A}}(D_X + c_0/\beta_0)}$ and $R_{Y^{*}}$ is
defined as in
\eqref{eq:simplified_feasibility_gap}.

Consequently, the sequence  $\{(\bar{x}^k, \bar{y}^k)\}_{k\geq 0}$ generated by Algorithm \ref{alg:A2} converges to a solution $(\bar{x}^{*}, \bar{y}^{*})$ of
the primal and dual problems \eqref{eq:primal_cvx_prob}-\eqref{eq:dual_cvx_prob} as $k\to\infty$ and $\tilde{\varepsilon}\to 0^{+}$.
\end{theorem}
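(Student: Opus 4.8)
The plan is to mirror the proof of Theorem~\ref{th:convergence}, adapting it to the switching nature of Algorithm~\ref{alg:A2}. The backbone of the argument is the claim that at every iteration $k$ the iterate $(\bar{x}^k,\bar{y}^k)$ satisfies the $\delta_k$-excessive gap condition \eqref{eq:excessive_gap_inexact} with respect to $\beta_1^k$ and $\beta_2^k$. I would prove this by induction on $k$: the base case $k=0$ follows from the initialization inherited from Algorithm~\ref{alg:A1} via Lemma~\ref{le:init_point}, and the inductive step splits according to the parity of $k$. For even $k$ the update is $\mathcal{S}^p$, so Theorem~\ref{th:alg_scheme2} applies and yields the $\delta_{k+1}$-excessive gap condition with the increment $\xi(\tau_k,\beta_1^k,\beta_2^k,\varepsilon^k)$; for odd $k$ the update is $\mathcal{S}^d$, so Theorem~\ref{th:alg_scheme} applies with the increment $\eta(\tau_k,\beta_1^k,\beta_2^k,\bar{y}^k,\varepsilon^k)$. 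In both cases one must first check that the respective coupling conditions \eqref{eq:alg_scheme_cond_p} and \eqref{eq:alg_scheme_cond}, i.e. $\beta_1^k\beta_2^k \geq (\tau_k/(1-\tau_k))^2 L_{\mathrm{A}}$ and $\beta_1^k\beta_2^k \geq \frac{\tau_k^2}{1-\tau_k}L_{\mathrm{A}}$, are in force before invoking the corresponding theorem.

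The delicate point, and the step I expect to be the main obstacle, is precisely showing that these coupling conditions survive the alternating updates. The two schemes shrink $\beta_1$ by different factors ($1-\tau_k$ in $\mathcal{S}^p$ versus $1-\tilde{\alpha}_k\tau_k$ in $\mathcal{S}^d$) and impose different coupling thresholds, with the primal threshold being the stronger of the two. I would therefore maintain an invariant of the form $\beta_1^k\beta_2^k \geq c\,\tau_k^2 L_{\mathrm{A}}$ for a suitable constant $c$, analogous to the relation $\beta_1^k\beta_2^{k+1} = \beta_1^0\beta_2^0\frac{1-\tau_0}{\tau_0^2}\tau_k^2$ recorded in \eqref{eq:beta_bounds}, and verify, using the two step-size recursions of Steps~2.5 and~3.5, that this invariant is propagated and is strong enough to imply \eqref{eq:alg_scheme_cond_p} and \eqref{eq:alg_scheme_cond} at the appropriate parities. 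Alongside this, I would confirm that $\{\delta_k\}$ is nonincreasing: by the choices $\bar{\varepsilon}^k := \tau_k\delta_k/R_k$ on even steps and $\bar{\varepsilon}^k := \tau_k\delta_k/Q_k$ on odd steps, the increments satisfy $\xi \leq R_k\bar{\varepsilon}^k \leq \tau_k\delta_k$ and $\eta \leq Q_k\bar{\varepsilon}^k \leq \tau_k\delta_k$ (the latter exactly as in Lemma~\ref{le:accuracy_level_choice}), so that $\delta_{k+1} = (1-\tau_k)\delta_k + (\text{increment}) \leq \delta_k$, whence $\delta_k \leq \delta_0 = \tilde{\varepsilon}$ for all $k$.

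Once the $\delta_k$-excessive gap condition and the bound $\delta_{k+1}\leq\tilde{\varepsilon}$ are secured, the conclusion is routine. Applying Lemma~\ref{le:excessive_gap} in the simplified form \eqref{eq:simplified_feasibility_gap} to $(\bar{x}^{\bar{k}+1},\bar{y}^{\bar{k}+1})$ gives the bracketing $-R_{Y^{*}}\mathcal{F}(\bar{x}^{\bar{k}+1}) \leq \phi(\bar{x}^{\bar{k}+1}) - g(\bar{y}^{\bar{k}+1}) \leq \beta_1^{\bar{k}+1}D_X + \delta_{\bar{k}+1}$, together with $\mathcal{F}(\bar{x}^{\bar{k}+1}) \leq 2\beta_2^{\bar{k}+1}R_{Y^{*}} + \sqrt{2\beta_1^{\bar{k}+1}\beta_2^{\bar{k}+1}D_X} + \sqrt{2\beta_2^{\bar{k}+1}\delta_{\bar{k}+1}}$. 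I would then substitute the decay rates recorded just before the theorem, namely $\beta_2^{k+1} \leq \beta_2^0(1-\tau_0)/(\tau_0 k+1)$ and $\beta_1^{k+1} \leq \beta_1^0/(\tau_0 k + 1)^{(1+\alpha^{*})/2}$ with $\tau_0 = 0.5$, use the hypothesis $\tilde{\varepsilon} \leq c_0/(0.5\bar{k}+1)$ to absorb the $\delta$-terms, and collect the data-dependent quantities into the constant $C_f$ given in the statement. This yields \eqref{eq:duality_gap2} and \eqref{eq:feasible_gap2}; the final convergence assertion then follows by letting $\bar{k}\to\infty$ and $\tilde{\varepsilon}\to 0^{+}$, exactly as in Theorem~\ref{th:convergence}.
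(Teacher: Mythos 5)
Your proposal follows exactly the route the paper intends: the paper omits this proof, stating only that it is ``similar to Theorem \ref{th:convergence}'', and your plan --- maintaining the $\delta_k$-excessive gap condition by induction through the alternating $\mathcal{S}^p$/$\mathcal{S}^d$ steps, showing $\{\delta_k\}$ is nonincreasing via the accuracy choices $\bar{\varepsilon}^k = \tau_k\delta_k/R_k$ and $\bar{\varepsilon}^k = \tau_k\delta_k/Q_k$, and then applying Lemma \ref{le:excessive_gap} with the decay rates $\beta_1^{k+1} \leq \beta_1^0/(\tau_0 k+1)^{(1+\alpha^{*})/2}$ and $\beta_2^{k+1} \leq \beta_2^0(1-\tau_0)/(\tau_0 k+1)$ recorded just before the theorem, with $\tau_0 = 0.5$ and $\delta_{\bar{k}+1} \leq \tilde{\varepsilon} \leq c_0/(0.5\bar{k}+1)$ --- is precisely that argument. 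Your explicit attention to propagating the coupling conditions \eqref{eq:alg_scheme_cond} and \eqref{eq:alg_scheme_cond_p} across the parity switch is a detail the paper glosses over, but it is the correct point to verify and your invariant-based plan for it is sound.
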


The proof of this theorem is  similar to Theorem \ref{th:convergence} and thus we omit the details here.
We can see from the right  hand side of \eqref{eq:duality_gap2} in Theorem \ref{th:convergence2} that this term is better than the one in Theorem
\ref{th:convergence}. Consequently, the worst case complexity of Algorithm \ref{alg:A2} is better than the one of Algorithm \ref{alg:A1}. However, as a
compensation, at each even iteration, the scheme $\mathcal{S}^p$ is performed. It requires an additional cost to compute $\bar{x}^{+}$ at the third line of
$\mathcal{S}^p$. As an exception, if the primal subproblem \eqref{eq:smoothed_gi} can be solved in a \textit{closed form} then the cost-per-iteration of
Algorithm \ref{alg:A2} is almost the same as in Algorithm \ref{alg:A1}.

\begin{remark}\label{re:primal_update_variant}
Note that we can only use the  inexact decomposition scheme with two primal steps  $\mathcal{S}^p$ in \eqref{eq:p_alg_scheme} to build an inexact
variant of \cite[Algorithm 1]{TranDinh2012a}.
Moreover, since the role of the schemes  $\mathcal{S}^p$ and $\mathcal{S}^d$ is symmetric, we can switch them in Algorithm \ref{alg:A2}.
\end{remark}

\section{Numerical tests}\label{sec:num_results}
In this section we  compare Algorithms \ref{alg:A1} and \ref{alg:A2} derived in this paper with the two algorithms developed in
\cite[Algorithms 1 and 2]{TranDinh2012a} which we named \texttt{2pDecompAlg} and \texttt{pdDecompAlg}, the proximal center-based decomposition algorithm in
\cite{Necoara2008}, an exact variant of the proximal based decomposition algorithm in \cite{Chen1994} and three parallel variants of the alternating
direction method of multipliers (with three different strategies to update the penalty parameter). We note that  these variants are the
modifications of the algorithm in \cite{Lenoir2007}, and they can be applied to solve problem \eqref{eq:primal_cvx_prob} with more than
two objective components (i.e. $M > 2$). We named these algorithms by \texttt{PCBDM}, \texttt{EPBDM}, \texttt{ADMM-v1},
\texttt{ADMM-v2} and \texttt{ADMM-v3}, respectively. 
For more simulations and comparisons we refer  to the extended technical report \cite{TranDinh2013a}.

The algorithms have been implemented in C++ running on a $16$ cores Intel \textregistered Xeon $2.7$GHz workstation with $12$ GB of RAM. In order to solve the
general convex programming subproblems, we either used a commercial software called \texttt{Cplex} or an open-source software package \texttt{IpOpt}
\cite{Waechter2006}. All the algorithms have been parallelized by using \texttt{OpenMP}.

In the four numerical examples below, since the feasible set $X_i$ has no specific structure, we chose the quadratic prox-function $p_{X_i}(x_i) :=
\frac{1}{2}\norm{x_i - x^c_i}^2 + r_i$ in the four first algorithms, i.e. Algorithms \ref{alg:A1} and \ref{alg:A2}, \texttt{2pDecompAlg} and
\texttt{pdDecompAlg}, where $x_i^c\in\mathbb{R}^{n_i}$ and $r_i = 0.75D_{X_i}$ are given, for $i=1,\dots, M$, as mentioned in Remark \ref{re:quad_prox}.
With this choice we can solve the primal subproblem \eqref{eq:smoothed_gi} in the first example by using \texttt{Cplex}.

We terminated these algorithms if
\begin{equation}\label{eq:term_cond1}
\texttt{rpfgap} := \norm{A\bar{x}^k - b}/\max\{\norm{b}, 1\} \leq 10^{-3},
\end{equation}
and either the approximate primal-dual gap satisfied
\begin{equation*}
\abs{f(\bar{x}^k;\beta_2^k) - \tilde{g}(\bar{y}^k;\beta_1^k)} \leq 10^{-3}\max\left\{1.0, \abs{\tilde{g}(\bar{y}^k;\beta_1^k)},
\abs{f(\bar{x}^k;\beta_2^k)}\right\},
\end{equation*}
or the value of the objective function did not significantly change in $5$ successive iterations, i.e.:
\begin{equation}\label{eq:term_cond2}
\abs{\phi(\bar{x}^{k}) - \phi(\bar{x}^{k-j})}/\max\{1.0, \abs{\phi(\bar{x}^k)}\}\leq 10^{-3} ~\textrm{for} ~j=1,\dots, 5.
\end{equation}
Here $\tilde{g}(\bar{y}^k;\beta_1^k)$ is the approximate value of $g(\bar{y}^k;\beta_1^k)$ evaluated at
$\tilde{x}^{*}(\bar{y}^k;\beta_1^k)$.

In \texttt{ADMM-v1} and \texttt{ADMM-v2} we used the update formula in \cite[formula (21)]{Boyd2011} to update the penalty parameter $\rho_k$ starting from
$\rho_0 := 1$ and $\rho_0 := 1000$, respectively. In \texttt{ADMM-v3} this penalty parameter was fixed at $\rho_k := 1000$ for all iterations.
In  \texttt{PCBDM}, we chose the same  prox-function as in our algorithms and the parameter $\beta_1$ in the subproblems was fixed at $\beta_1 :=
\frac{\varepsilon_{\mathrm{p}}\max\left\{1.0, \abs{\phi(\bar{x}^0)}\right\}}{D_X}$.
We terminated all the remaining algorithms if the both conditions \eqref{eq:term_cond1} and \eqref{eq:term_cond2} were satisfied.
The maximum number of iterations $\texttt{maxiter}$ was set to $5000$ in all algorithms.
We warm-started the \texttt{Cplex} and \texttt{IpOpt} solvers at the iteration $k$ at the point given by the previous iteration $k-1$ for $k\geq 1$.
The accuracy levels $\bar{\varepsilon}_k$ in \texttt{Cplex} and \texttt{IpOpt} and $\delta_k$ were updated as in Algorithms \ref{alg:A1} and \ref{alg:A2}
starting from $\delta_0 = 10^{-3}$ and then set to $\max\{\bar{\varepsilon}_k, 10^{-10}\}$. In other algorithms, this accuracy level was fixed at
$\bar{\varepsilon}_k = 10^{-8}$.
We concluded that ``the algorithm is failed'' if either the maximum number of iterations \texttt{maxiter} was reached or the primal subproblems
\eqref{eq:smoothed_gi} could not be solved by \texttt{IpOpt} or \texttt{Cplex} due to numerical issues.

We benchmarked all algorithms with performance profiles \cite{Dolan2002}. Recall that a performance profile is built based on a set $\mathcal{S}$ of $n_s$
algorithms (solvers) and a collection $\mathcal{P}$ of $n_p$ problems. Suppose that we build a profile based on computational time. We denote by
$T_{p,s} := \textit{computational time required to solve problem $p$ by solver $s$}$.
We compare the performance of algorithm $s$ on problem $p$ with the best performance of any algorithm on this problem; that is we compute the performance ratio
$r_{p,s} := \frac{T_{p,s}}{\min\{T_{p,\hat{s}} ~|~ \hat{s}\in \mathcal{S}\}}$.
Now, let $\tilde{\rho}_s(\tilde{\tau}) := \frac{1}{n_p}\mathrm{size}\left\{p\in\mathcal{P} ~|~ r_{p,s}\leq \tilde{\tau}\right\}$ for
$\tilde{\tau} \in\mathbb{R}_{+}$. The function $\tilde{\rho}_s:\mathbb{R}\to [0,1]$ is the probability for solver $s$ that a performance ratio is within a
factor $\tilde{\tau}$ of the best possible ratio. We use the term ``performance profile'' for the distribution function $\tilde{\rho}_s$ of a performance
metric.
In the following numerical examples, we plotted the performance profiles in $\log_2$-scale, i.e. $\rho_s(\tau) :=
\frac{1}{n_p}\mathrm{size}\left\{p\in\mathcal{P} ~|~ \log_2(r_{p,s})\leq
\tau := \log_2\tilde{\tau}\right\}$.

\vskip0.1cm
\noindent\textbf{6.1. Basic pursuit problem.}
The basic pursuit problem is one of the fundamental problems in signal processing and compressive sensing. 
Mathematically, this problem can be formulated as follows:
\begin{equation}\label{eq:basic_lasso}
\begin{cases}
\min &\norm{x}_1\\
\textrm{s.t.} & Ax = b,
\end{cases}
\end{equation}
where $A \in \mathbb{R}^{m\times n}$ and $b\in\mathbb{R}^m$ are given.
Since $\phi(x) = \norm{x}_1 = \sum_{i=1}^n\phi_i(x_i) = \sum_{i=1}^n\abs{x_i}$, the primal subproblem
\eqref{eq:smoothed_gi} formed from \eqref{eq:basic_lasso}
in the algorithms can be expressed as
\begin{equation*}
\min_{x_i\in\mathbb{R}}\left\{|x_i| + (A_i^Ty)x_i + \frac{\beta_1}{2}(x_i-x^c_i)^2\right\}.
\end{equation*}
This problem can be solved in a closed form without any subiteration. 
We implemented Algorithms \ref{alg:A1} and \ref{alg:A2} to solve this problem in order to compare the effect of the parameter $\alpha^{*}$ on the performance
of the algorithm. The data of this problem is generated as follows. Matrix $A$ is generated randomly such that it is orthogonal. Vector $b := Ax^0$, where
$x^0$ is a $k$-sparse random vector ($k = \lfloor0.05n\rfloor$). We tested  Algorithms \ref{alg:A1} and \ref{alg:A2} with $5$ problems and the results reported
by these algorithms are presented in Table
\ref{tb:basic_lasso} with $\alpha^{*} = 0.25$ and $\alpha^{*} = 0.75$.
\begin{table}[!ht]
\begin{center}
\begin{footnotesize}
\newcommand{\cell}[1]{{\!\!\!}#1{\!\!\!}}
\newcommand{\cellbf}[1]{{\!\!\!\!}\textbf{#1}{\!\!\!\!}}
\caption{Performance comparison of Algorithms 1 and 2 for solving \eqref{eq:basic_lasso} (This test was done on a MAC book Laptop (Intel 2.6GHz core i7, 
16GB Ram). The information in rows 3, 4, 6 and 7, and columns 2-6 is the number of iterations / the computational time in second}\label{tb:basic_lasso}
\begin{tabular}{|c|r|r|r|r|r|}\hline
\multicolumn{1}{|c|}{[$(m,n)$]} & (20,124) & (50,128) & (80,256) & (100, 680) & (100, 1054) \\ \hline
 \multicolumn{6}{|c|}{$\alpha^{*} = 0.25$} \\ \hline
  \texttt{Algorithm 1}  & \cell{8254/1.3858} &\cell{5090/0.8769} &\cell{10144/2.2876} &\cell{29773/7.8386} &\cell{35615/10.9315} \\
 \texttt{Algorithm 2}  & \cell{4836/0.9025} &\cell{4744/0.8808} &\cell{8060/1.9809} &\cell{13220/3.7619} &\cell{12348/3.7967} \\ \hline
 \multicolumn{6}{|c|}{$\alpha^{*} = 0.75$} \\ \hline
 \texttt{Algorithm 1}  & \cell{7115/1.1851} &\cell{6644/1.1632} &\cell{8749/1.9632} &\cell{14927/4.0424} &\cell{16128/4.9169} \\
 \texttt{Algorithm 2}  & \cell{15016/2.6689} &\cell{14284/2.6121} &\cell{17140/4.0286} &\cell{34048/9.6910} &\cell{36668/11.1801} \\ \hline
\end{tabular}
\end{footnotesize}
\end{center}
\end{table}
As we can see from this  table that Algorithm \ref{alg:A2}  performs better than Algorithm \ref{alg:A1} in terms of number of iterations as well as
computational time for the case $\alpha^{*} = 0.25$. In the case $\alpha^{*} = 0.75$, Algorithm \ref{alg:A1} performs better than Algorithm \ref{alg:A2}. 
This example claims the theoretical results.

\vskip0.1cm
\noindent\textbf{6.2. Nonsmooth separable convex optimization. }
Let us consider the following simple nonsmooth convex optimization problem:
\begin{equation}\label{eq:example}
\left\{\begin{array}{cl}
\displaystyle\min_{x\in\mathbb{R}^n}& \phi(x) := \sum_{i=1}^ni\abs{x_i - x^a_i},\\
\mathrm{s.t.} ~& \sum_{i=1}^nx_i = b,~ x_i \in X_i, ~i=1,\dots, n,
 \end{array}\right.
\end{equation}
where $b, x_i^a\in\mathbb{R}$ are given $(i=1,\dots, n)$. 
Let us assume that $x_i\in X_i := [l_i, u_i]$ is a given interval in $\mathbb{R}$. 
Then, this problem can be formulated in the form of \ref{eq:primal_cvx_prob} with $M = n$. 
Since the Lagrange function $\mathcal{L}(x, y) = \sum_{i=1}^n\left[ i\abs{x_i - x^a_i} + y(x_i-b/n)\right]$ is nonsmooth, where $y\in\mathbb{R}$ is a Lagrange
multiplier, we choose $p_{X_i}(x_i) := \frac{1}{2}\norm{x_i - x_i^c}^2 + 0.75D_{X_i}$ such that the primal subproblem can be written as
\begin{equation}\label{eq:exam1_smoothed_gi}
g_i(y; \beta_1) \!:= \!\!\! \min_{x_i\in [l_i, u_i]}\!\!\!\!\Big\{ i\abs{x_i \!-\! x_i^a} \!+\! y\big(x_i \!-\! \frac{b}{n}\big) \!+\!
\frac{\beta_1}{2}\abs{x_i \!-\!
x_i^c}^2 \!+\! 0.75D_{X_i}\Big\},
\end{equation}
where $\beta_1 > 0$. Now, we assume that we can choose the interval $[l_i, u_i]$ sufficiently large such that the constraint $x_i\in
[l_i, u_i]$ is inactive. Then, the solution of problem \eqref{eq:exam1_smoothed_gi} can be computed explicitly as $x_i^{*}(y;\beta_1) := V_i(x_i^a, x_i^c, y,
\beta_1, i)$, where the \textit{soft-thresholding-type operator} $V_i$ is defined as follows:
\begin{equation}\label{eq:exam1_shrinkage_opt}
V_i(x_i^a, x_i^c, y, \beta_1, \gamma) \!:=\! \begin{cases}
x^c_i \!-\! \beta_1^{-1}(\gamma \!+\! y) & \mathrm{if}~ x^c_i \!-\! \beta_1^{-1}(\gamma \!+\! y) > x_i^a,\\
x^c_i + \beta_1^{-1}(\gamma - y) & \mathrm{if}~ x^c_i + \beta_1^{-1}(\gamma - y) < x_i^a,\\
x^a_i                            & \mathrm{if}~ y \!+\! \beta_1(x_i^a \!-\! x_i^c) \!\in\! [-\gamma, \gamma].
\end{cases}
\end{equation}
In this example, we tested  five algorithms: Algorithm \ref{alg:A1}, Algorithm \ref{alg:A2}, \cite[Algorithm 1]{TranDinh2012a}, \cite[Algorithm
2]{TranDinh2012a} and \texttt{PCBDM} for $10$ problems with the size varying from $n = 5$ to $n = 100,000$. Note that if we reformulate
\eqref{eq:example} as a linear programming problem (LP) by introducing slack variables, then the resulting LP problem has $2n$
variables and $2n+1$ inequality constraints.

The data of these tests were created as follows. The value $c$ was set to $b = 2n$, $x^a := (x^a_1,\dots, x^a_n)^T$, where $x_a^i := i
- n/2$. The maximum number of iterations  \texttt{maxiter} was increased to $10,000$ instead of $5,000$. The performance  of the
five algorithms is reported in Table \ref{tb:exam1_results}.
\begin{table}[!ht]
\begin{center}
\begin{footnotesize}
\newcommand{\cell}[1]{{\!\!\!}#1{\!\!\!}}
\newcommand{\cellbf}[1]{{\!\!\!\!}\textbf{#1}{\!\!\!\!}}
\caption{Performance comparison of five algorithms for solving \eqref{eq:example}}\label{tb:exam1_results}
\begin{tabular}{|c|c|r|r|r|r|r|r|r|r|r|r|}\hline
\multicolumn{2}{|c|}{\!\!\!\!~\!\!\!\!}  & \multicolumn{10}{|c|}{Algorithm performance and results} \\ \hline
\multicolumn{2}{|c|}{\!\! Size [$n$] \!\!\!} & 5 & 10 & 50 & 100 & 500 & 1,000 &{\!\!\!} 5,000 {\!\!} & {\!\!\!}10,000{\!\!} &{\!\!\!} 50,000{\!\!} &
\!\!\!{100,000}\!\! \\ \hline
&  \!\!\!\!\texttt{2pDecompAlg}\!\!\!\!
& \cell{226} &\cell{184} &\cell{704} &\cell{843} &\cell{1211} &\cell{1277} &\cellbf{1371} &\cellbf{1387} &\cell{1408} &\cell{1409}\\
& \!\!\!\!\texttt{Algorithm 1}\!\!\!\!
& \cell{1216} &\cell{925} &\cellbf{377} &\cellbf{552} &\cellbf{1092} &\cellbf{1209} &\cell{1385} &\cell{1422} &\cell{1374} &\cellbf{1352} \\
\!\!\!\!\texttt{iters}\!\!\!\!
&\!\!\!\!\texttt{Algorithm 2}\!\!\!\!
& \cell{452} &\cell{334} &\cell{544} &\cell{794} &\cell{1142} &\cell{1228} &\cell{1415} &\cell{1433} &\cellbf{1358} &\cell{1368} \\
&  \!\!\!\!\texttt{pdDecompAlg}\!\!\!\!
& \cell{612} &\cell{458} &\cell{830} &\cell{887} &\cell{1253} &\cell{1341} &\cell{1451} &\cell{1428} &\cell{1487} &\cell{1446} \\
& \!\!\!\!\cell{\texttt{PCBDM}}\!\!\!\!
& \cellbf{62} &\cellbf{123} &\cell{507} &\cell{1036} &\cell{  3767} &\cell{  3693} &\cell{  6119} &\cell{  5816} &\cell{  3099} &\cell{  3285} \\
\hline
&  \!\!\!\!\texttt{2pDecompAlg}\!\!\!\!
& \cell{0.0143} &\cell{0.0105} &\cell{0.0339} &\cell{0.0495} &\cell{0.0809} &\cell{0.1078} &\cell{0.2969} &\cell{0.5943} &\cell{2.5055} &\cell{4.9713} \\
& \!\!\!\!\texttt{Algorithm 1}\!\!\!\!
& \cell{0.0592} &\cell{0.0418} &\cellbf{0.0170} &\cellbf{0.0266} &\cellbf{0.0596} &\cellbf{0.0827} &\cellbf{0.2477} &\cell{0.4544} &\cell{2.1970}
&\cell{4.3869} \\
\!\!\texttt{time}\!\!\!
& \!\!\!\!\texttt{Algorithm 2}\!\!\!\!
& \cell{0.0244} &\cell{0.0166} &\cell{0.0222} &\cell{0.0406} &\cell{0.0737} &\cell{0.0909} &\cell{0.3522} &\cell{0.4646} &\cellbf{2.0875} &\cellbf{4.2659} \\
&  \!\!\!\!\texttt{pdDecompAlg}\!\!\!\!
& \cell{0.0316} &\cell{0.0199} &\cell{0.0351} &\cell{0.0450} &\cell{0.0716} &\cell{0.0979} &\cell{0.3013} &\cellbf{0.4416} &\cell{2.2879} &\cell{4.3119} \\
& \cell{\texttt{PCBDM}}
& \cellbf{0.0027} &\cellbf{0.0036} &\cell{0.0218} &\cell{12.1021} &\cell{0.2116} &\cell{0.2232} &\cell{1.1448} &\cell{1.3084} &\cell{3.0277} &\cell{6.3322} \\
\hline
\end{tabular}
\end{footnotesize}
\end{center}
\end{table}
Here, \texttt{iters} is the number of  iterations and \texttt{time}
is the CPU time in seconds.

As we can see from Table \ref{tb:exam1_results},  Algorithm \ref{alg:A1} is the
best in terms of number of iterations and computational time.
Algorithm \ref{alg:A2} works better than \texttt{pdDecompAlg}. The first four algorithms have consistently outperformed \texttt{PCBDM} in terms of
number of iterations as well as computational time in this example.

\vskip0.1cm
\noindent\textbf{6.3. Separable convex quadratic programming.}
Let us consider a separable convex quadratic program of the form:
\begin{equation}\label{eq:sqp_prob}
\left\{\begin{array}{cl}
\displaystyle\min_{x\in\mathbb{R}^n} &\set{\phi(x) := \sum_{i=1}^M\frac{1}{2}x_i^TQ_ix_i + q_i^Tx_i},\\
\textrm{s.t.} &\sum_{i=1}^MA_ix_i = b,\\
&x_i \geq 0, ~ i=0,\dots, M.
\end{array}\right.
\end{equation}
Here $Q_i\in\mathbb{R}^{n_i\times n_i}$ is a symmetric positive semidefinite matrix, $q_i\in\mathbb{R}^{n_i}$, $A_i\in\mathbb{R}^{m\times n_x}$ for $i=1,\dots,
M$ and $b\in\mathbb{R}^m$.
In this example, we compared the above algorithms by building their performance profiles in terms of number of iterations and the total computational time.

\vskip0.1cm
\noindent\textit{Problem generation. } The input data of the test was generated as follows.
Matrix $Q_i := R_iR^T_i$, where $R_i$ is an $n_i\times
r_i$ random matrix in $[l_Q,  u_Q]$ with $r_i := \lfloor n_i/2\rfloor$.
Matrix $A_i$ was generated randomly in $[l_A, u_A]$. Vector $q_i := -Q_ix^0_i$, where
$x^0_i$ is a given feasible point in $(0, r_{x_0})$ and vector $b := \sum_{i=1}^MA_ix^0_i$.
The density of both matrices $A_i$ and $R_i$ is $\gamma_{A}$.
Note that the problems generated as above are always feasible. Moreover, they are not strongly convex.
The tested collection consisted of $n_p = 50$ problems with different sizes and the sizes were generated randomly as follows:
\begin{itemize}
\item\textit{Class 1:} $20$ problems with  $20 < M < 100$, $50 < m < 500$, $5 < n_i < 100$    and $\gamma_A = 0.5$.
\item\textit{Class 2:} $20$ problems with $100 < M < 1000$, $100 < m < 600$, $10 < n_i < 50$ and $\gamma_A = 0.1$.
\item\textit{Class 3:} $10$ problems with $1000 < M < 2000$, $500 < m < 1000$, $100 < n_i < 200$ and $\gamma_A = 0.05$.
\end{itemize}

\vskip0.1cm
\noindent\textit{Scenarios. }
We considered two different scenarios:
\newline\textit{Scenario I:}
In this scenario, we aimed at comparing Algorithms \ref{alg:A1} and \ref{alg:A2}, \texttt{2pDecompAlg}, \texttt{pdDecompAlg}, \texttt{ADMM-v1} and
\texttt{EPBDM}, where we generated the values of $Q$ relatively small.  More precisely, we chose $[l_Q, u_Q] = [-0.1, 0.1]$, $[l_A, u_A] = [-1, 1]$ and $r_{x_0}
= 2$.
\newline
\textit{Scenario II:}
The second scenario aimed at testing the affect of the matrix $A$ and the update rule of the penalty parameter to the performance of \texttt{ADMM}.
We chose $[l_Q, u_Q] = [-1, 1]$, $[l_A, u_A] = [-5, 5]$ and $r_{x_0} = 5$.

\vskip0.1cm
\noindent\textit{Results. }
In the first scenario, the size of the problems satisfied $23 \leq M \leq 1992$, $95\leq m \leq 991$ and $1111 \leq n \leq 297818$.
The performance profiles of the six algorithms are plotted in Figure \ref{fig:perf_qp_exam02} with respect to the number of iterations and computational time.
\begin{figure}[!ht]
\centerline{\includegraphics[angle=0,height=3.8cm,width=12.3cm]{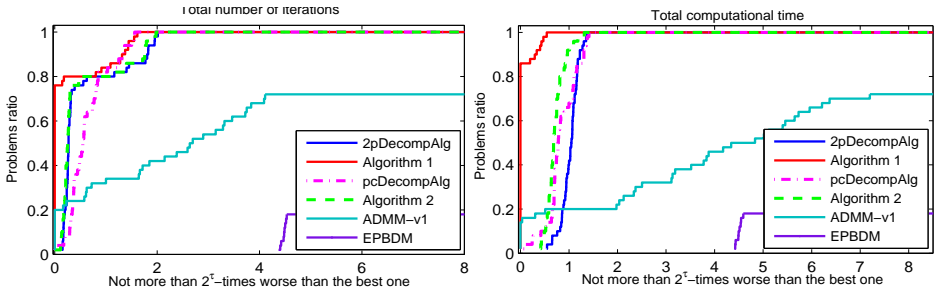}}
\caption{Performance profiles in $\log_2$ scale for Scenario I by using \texttt{IpOpt}: Left-Number of iterations, Right-Computational time.}
\label{fig:perf_qp_exam02}
\vskip-0.2cm
\end{figure}

From these performance profiles, we can observe that the Algorithm \ref{alg:A1}, Algorithm \ref{alg:A2}, \\  \texttt{2pDecompAlg} and \texttt{pdDecompAlg}
converged for all problems.  \texttt{ADMM-v1} was successful in solving $36/50$ ($72.00\%$) problems while \texttt{EPBDM} could only solve $9/50$ $(18.00\%)$
problems. It shows that Algorithm \ref{alg:A1} is the best one in terms of number of iterations. It could solve up to $38/50$ ($76.00\%$) problems
with the best performance. \texttt{ADMM-v1} solved $10/50$ ($20.00\%$) problems with the best performance, while this ratio was only $2/50$ ($4.00\%$) and
$1/50$ ($2.00\%$) in \texttt{pdDecompAlg} and Algorithm \ref{alg:A2}, respectively. If we compare the computational time then Algorithm \ref{alg:A1} is the best
one. It could solve up to $43/50$ ($86.00\%$) problems with the best performance. \texttt{ADMM-v1} solved $7/50$ ($14.00\%$) problems with the best performance.

Since the performance of Algorithms \ref{alg:A1} and \ref{alg:A2}, \texttt{2pDecompAlg}, \texttt{pdDecompAlg} and \texttt{ADMM} are relatively comparable, we
tested Algorithms \ref{alg:A1} and \ref{alg:A2}, \texttt{2pDecompAlg}, \texttt{pdDecompAlg}, \texttt{ADMM-v1}, \texttt{ADMM-v2} and
\texttt{ADMM-v3} on a collection of $n_p = 50$ problems in the second scenario.
The performance profiles of these algorithms are shown in Figure \ref{fig:perf_qp_exam01}.
\begin{figure}[!ht]
\vskip-0.2cm
\centerline{\includegraphics[angle=0,height=3.8cm,width=12.3cm]{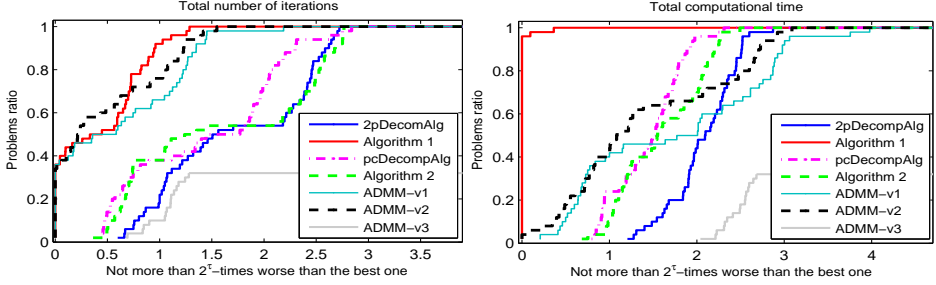}}
\caption{Performance profiles in $\log_2$ scale for Scenario II by using \texttt{Cplex} with Simplex method: Left-Number of iterations, Right-Computational
time.}\label{fig:perf_qp_exam01}
\vskip-0.5cm
\end{figure}
From these performance profiles we can observe the following:
\begin{itemize}
\item The six first algorithms were successful in solving all problems, while \texttt{ADMM-v3} could only solve $16/50$ ($32\%$) problems.
\item Algorithm \ref{alg:A1} and \texttt{ADMM-v1} is the best one in terms of number of iterations. It both solved $18/50$ ($36\%$) problems with the
best performance. This ratio is $17/50$ $(34\%)$ in \texttt{ADMM-v2}.
\item Algorithm \ref{alg:A1} is the best one in terms of computational time. It could solve $48/50$ $(96\%)$ the problems with the best performance, while this
quantity is $2/50~(4\%)$ in \texttt{ADMM-v2}.
\end{itemize}

\vskip0.1cm
\noindent\textbf{6.4. Nonlinear smooth separable convex programming.}
We consider the following nonlinear, smooth and separable convex programming problem:
\begin{equation}\label{eq:resource_allocation}
\left\{\begin{array}{cl}
\displaystyle\min_{x_i\in \mathbb{R}^{n_i}} &\Big\{\phi(x) := \displaystyle\sum_{i=1}^M\frac{1}{2}(x_i-x_i^0)Q_i(x_i - x_i^0) - w_i\ln(1 + b_i^Tx_i) \Big\},\\
\textrm{s.t.} &\displaystyle\sum_{i=1}^MA_ix_i = b,\\
&x_i\succeq 0, ~~i=1,\dots, M.
\end{array}\right.
\end{equation}
Here, $Q_i$ is a positive semidefinite and $x^i_0$ is given vector, $i=1,\dots, M$.

\vskip0.1cm
\noindent\textit{Problem generation. }
In this example, we generated a collection of $n_p = 50$ test problems as follows.
Matrix $Q_i$ is diagonal and was generated randomly in $[l_Q, u_Q]$.
Matrix $A_i$ was generated randomly in $[l_A, u_A]$ with the density $\gamma_A$.
Vectors $b_i$ and $w_i$ were generated randomly in $[l_b, u_b]$ and $[0, 1]$, respectively, such that $w_i\geq 0$ and $\sum_{i=1}^Mw_i = 1$.
Vector $b := \sum_{i=1}^MA_ix_i^0$ for a given $x_i^0$ in $[0, r_{x_0}]$.
The size of the problems was generated randomly based on the following rules:
\begin{itemize}
\item\textit{Class 1:} $10$ problems with $20 < M < 50$, $50 < m < 100$, $10 < n_i < 50$ and $\gamma_A = 1.0$.
\item\textit{Class 2:} $10$ problems with $50 < M < 250$, $100 < m < 200$, $20 < n_i < 50$ and $\gamma_A = 0.5$.
\item\textit{Class 3:} $10$ problems with $250 < M < 1000$, $100 < m < 500$, $50 < n_i < 100$ and $\gamma_A = 0.1$.
\item\textit{Class 4:} $10$ problems with $1000 < M < 5000$, $500 < m < 1000$, $50 < n_i < 100$ and $\gamma_A = 0.05$.
\item\textit{Class 5:} $10$ problems with $5000 < M < 10000$, $500 < m < 1000$, $50 < n_i < 100$ and $\gamma_A = 0.01$.
\end{itemize}

\vskip0.1cm
\noindent\textit{Scenarios. }
We also considered two different scenarios as in the previous example:
\newline
\textit{Scenario I: }
Similar to the previous example, with this scenario, we aimed at comparing Algorithms \ref{alg:A1} and \ref{alg:A2}, \texttt{2pDecompAlg}, \texttt{pdDecompAlg},
\texttt{ADMM-v1}, \texttt{PCBDM} and \texttt{EPBDM}. In this scenario, we chose: $[l_Q, u_Q] \equiv [-0.01, 0.01]$, $[l_b, u_b] \equiv [0, 100]$, $[l_A, u_A]
\equiv [-1, 1]$ and $r_{x_0} = 1$.
\newline
\textit{Scenario II:  }
In this scenario, we only tested two first variants of \texttt{ADMM} and compared them with the four first algorithms. Here, we chose $[l_Q, u_Q] \equiv
[0.0, 0.0]$ (i.e. without quadratic term), $[l_b, u_b] \equiv [0, 100]$, $[l_A, u_A] \equiv [-1, 1]$ and $r_{x_0} = 10$.

\vskip0.1cm
\noindent\textit{Results. }
For \textit{Scenario I}, we see that the size of the problems is in $20 \leq M \leq 9938$, $50 \leq m \leq 999$ and $695 \leq n \leq 741646$.
The performance profiles of the algorithms are plotted in Figure \ref{fig:perf_scp2}.
\begin{figure}[ht]
\vskip-0.2cm
\centerline{\includegraphics[angle=0,height=3.8cm,width=12.3cm]{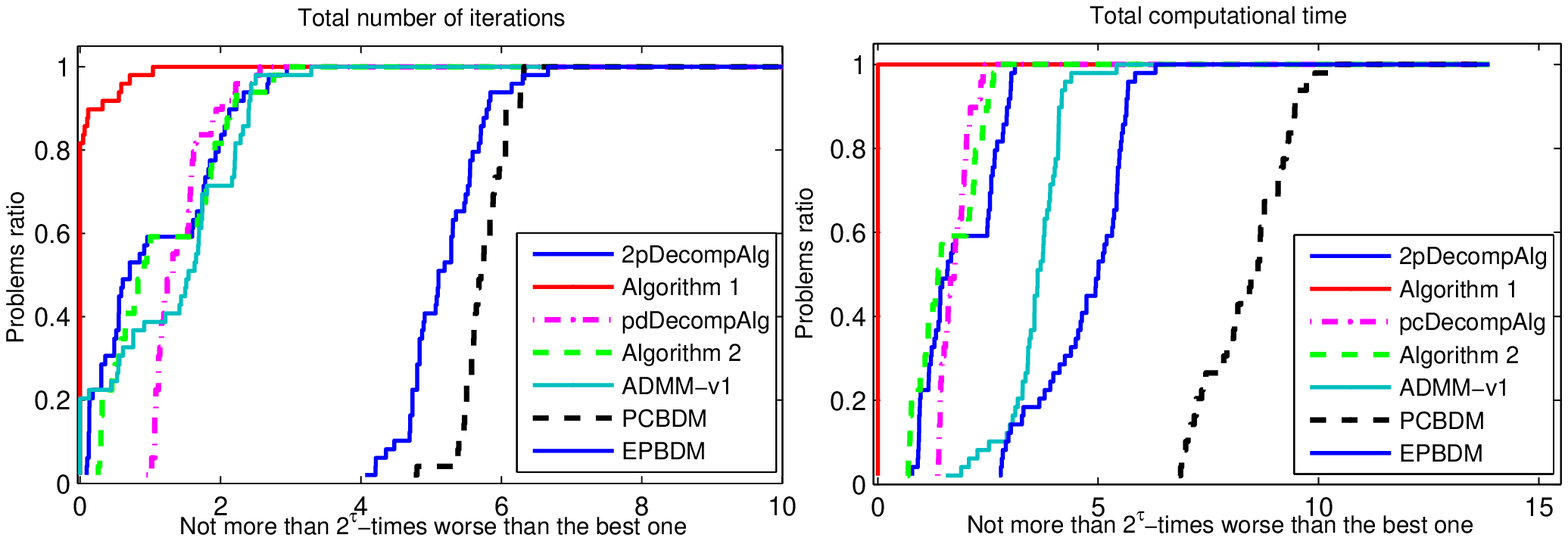}}
\caption{Performance profiles on \texttt{Scenario II} in $\log_2$ scale by using \texttt{IpOpt}: Left-Number of iterations, Right-Computational time.}
\label{fig:perf_scp2}
\vskip-0.3cm
\end{figure}
The results on this collection shows that Algorithm \ref{alg:A1} is the best one in terms of number of iterations.
It could solve up to $41/50$ ($82\%$) problems with the best performance, while \texttt{ADMM-v1} solved $10/50$ ($20\%$) problems with the best performance.
Algorithm \ref{alg:A1} is also the best one in terms of computational time. It could solve $50/50$ ($100\%$) problems with the best performance. \texttt{PCBDM}
was very slow compared to the rest in this scenario.

For \textit{Scenario II}, the size of the problems was varying in  $20 \leq M \leq 9200$, $50 \leq m \leq 946$ and $695 \leq n \leq 684468$.
The performance profiles of the tested algorithms are plotted in Figure \ref{fig:perf_scp1}.
\begin{figure}[ht]
\vskip-0.0cm
\centerline{\includegraphics[angle=0,height=3.8cm,width=12.3cm]{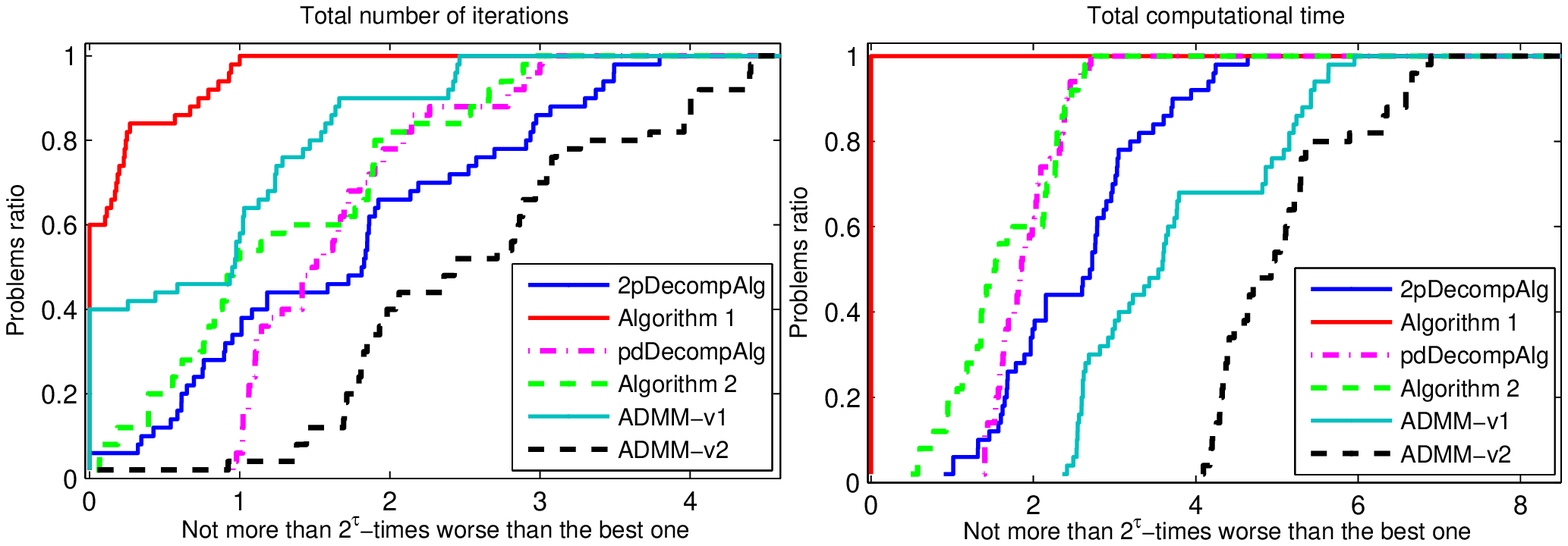}}
\caption{Performance profiles in $\log_2$ scale for \texttt{Scenario I} by using \texttt{IpOpt}: Left-Number of iterations, Right-Computational time.}
\label{fig:perf_scp1}
\vskip-0.3cm
\end{figure}
We can see from these performance profiles that Algorithm \ref{alg:A1} is the best one in terms of number of iterations.
It could solve up to $30/50$ ($60\%$) problems with the best performance, while this number were $3/50$ ($6\%$) and
$20/50$ ($40\%$) problems in \texttt{2pDecompAlg} and \texttt{ADMM-v1}, respectively.
Algorithm \ref{alg:A1} was also the best one in terms of computational time. It solved all problems with the best performance. \texttt{ADMM-v2} was slow
compared to the rest in this scenario.

From the above two numerical tests, we can observe that Algorithm \ref{alg:A1} performs well compared to the rest in terms of computational time due to
its low cost per iteration. ADMM encounters some difficulty regarding the choice of the penalty parameter as well as the effect of matrix $A$. Theoretically,
PCBDM has the same worst-case complexity bound as Algorithms \ref{alg:A1} and \eqref{alg:A2}. However, its performance is quite poor. This happens due to the
choice of the Lipschitz constant $L_{\mathrm{A}}$ of the gradient of the dual function and the evaluation of the quantity $D_X$.

\section{Concluding remarks}\label{sec:conclusion}
We have proposed a  new decomposition algorithm based on the dual decomposition and excessive gap techniques.
The new algorithm requires to perform only one primal step which can be parallelized efficiently, and two dual steps.
Consequently, the computational complexity of this algorithm is very similar to other dual based decomposition algorithms from the literature,  but with a
better theoretical rate of convergence. Moreover, the algorithm automatically updates both smoothness parameters at each iteration.
We notice that the dual steps are only matrix-vector multiplications, which can be done efficiently with a low computational cost in practice.
Furthermore, we allow one to solve the primal convex subproblem of each component up to a given accuracy, which is always the case in any
practical implementation.
An inexact switching variant of Algorithm  \ref{alg:A1} has also been presented.
Apart from the inexactness, this variant allows one to simultaneously update both smoothness parameters instead of switching them.
Moreover, it improves the disadvantage of Algorithm \ref{alg:A1} when the constant $\alpha^{*}$ in Theorem \ref{th:alg_scheme} is relatively
small, though it did not outperform Algorithm \ref{alg:A1} in the numerical tests.
The worst-case complexity of both new algorithms is at most $O(1/\varepsilon)$ for a given tolerance $\varepsilon> 0$.
Preliminary numerical tests show that both algorithms outperforms other related existing algorithms from the literature.

\vskip 0.2cm
\begin{footnotesize}
\noindent{\textbf{Acknowledgments.}}
This research was supported by Research Council KUL: CoE EF/05/006 Optimization in Engineering(OPTEC), GOA AMBioRICS,
IOF-SCORES4CHEM, several PhD/postdoc \& fellow grants; the Flemish Government via FWO: PhD/postdoc grants, projects G.0452.04, G.0499.04, G.0211.05,
G.0226.06, G.0321.06, G.0302.07, G.0320.08 (convex MPC), G.0558.08 (Robust MHE), G.0557.08, G.0588.09, research communities (ICCoS, ANMMM, MLDM) and via IWT:
PhD Grants, McKnow-E, Eureka-Flite+EU: ERNSI; FP7-HD-MPC (Collaborative Project STREP-grantnr. 223854), Contract Research: AMINAL, HIGHWIND, and Helmholtz
Gemeinschaft: viCERP; Austria: ACCM, and the Belgian Federal Science Policy Office: IUAP P6/04 (DYSCO, Dynamical systems, control and optimization, 2007-2011),
European Union FP7-EMBOCON under grant agreement no 248940; CNCS-UEFISCDI (project TE231, no. 19/11.08.2010); ANCS (project PN II, no. 80EU/2010); Sectoral
Operational Programme Human Resources Development 2007-2013 of the Romanian Ministry of Labor, Family and Social Protection through the Financial Agreement
POSDRU/89/1.5/S/62557.
\end{footnotesize}

\appendix
\section{The details of the proofs}\label{sec:proofs}
In this appendix we provide the full proof of Theorem \ref{th:alg_scheme}, Theorem \ref{th:alg_scheme2}, Lemma \ref{le:init_point} and Lemma
\ref{le:choice_of_tau}.

\vskip0.2cm
\noindent\textit{A.1. The proof of Theorem \ref{th:alg_scheme}. }
Let us denote by $\bar{y}^2 := y^{*}(\bar{x}; \beta_2)$, $x^1 := x^{*}(\hat{y}; \beta_1)$ and $\tilde{x}^1 = \tilde{x}^{*}(\hat{y}; \beta_1)$.
From the definition of $f$, the second line of \eqref{eq:alg_scheme} and \eqref{eq:beta_update}, we have
\begin{align}\label{eq:th31_proof1}
f(\bar{x}^{+}; \beta_2^{+}) &:= \phi(\bar{x}^{+}) + \psi(\bar{x}^{+};\beta_2^{+})
\overset{\tiny{\mathrm{line}~2\eqref{eq:alg_scheme}}}{{\!\!\!\!\!\!\!\!\!\!}={\!\!\!\!\!\!}}
\phi((1 \!-\! \tau)\bar{x} \!+\! \tau \tilde{x}^{1})  +  \max_{y\in\mathbb{R}^m}\left\{\!\left[A((1 \!-\!\tau)\bar{x} \!+\! \tau\tilde{x}^{1}) \!-\! b\right]^Ty
\!-\! \frac{\beta_2^{+}}{2}\norm{y}^2\right\} \nonumber\\
& \overset{\tiny{\phi-\mathrm{convex}+\eqref{eq:beta_update}}}{\leq} \max_{y\in\mathbb{R}^m}\Big\{ \! (1 \!-\! \tau)\big[\phi(\bar{x}) \!+\! (A\bar{x}
\!-\! b)^Ty \!-\! \frac{\beta_2}{2}\norm{y}^2\big]_{[1]}  +   \tau\left[\phi( \tilde{x}^1) \!+\! (A\tilde{x}^1 \!-\! b)^Ty \right]_{[2]} \Big\}.
\end{align}
Now, we estimate two terms in the last line of \eqref{eq:th31_proof1}.
First we note that $a^Ty - \frac{\beta}{2}\norm{y}^2 = \frac{1}{2\beta}\norm{a}^2 - \frac{\beta}{2}\norm{y-\frac{1}{\beta}a}^2$ for any vectors $a$ and $y$ and
$\beta > 0$. Moreover, since $\bar{y}^2$ is the solution of the strongly concave maximization \eqref{eq:psi_fun} with a concavity parameter $\beta_2$, we can
estimate
\begin{eqnarray}\label{eq:th31_proof2}
[\cdot]_{[1]} &&:= \phi(\bar{x}) + (A\bar{x} - b)^Ty - \frac{\beta_2}{2}\norm{y}^2 = \phi(\bar{x}) + \frac{1}{2\beta_2}\norm{A\bar{x} - b}^2  -
\frac{\beta_2}{2}\norm{y -
\bar{y}^2}^2 \nonumber\\
&& = \phi(\bar{x}) + \psi(\bar{x};\beta_2) - \frac{\beta_2}{2}\norm{y - \bar{y}^2}^2 \overset{\tiny{\eqref{eq:smoothed_phi}}}{=}
f(\bar{x}; \beta_2) - \frac{\beta_2}{2}\norm{y - \bar{y}^2}^2 \\
&& \overset{\tiny{\eqref{eq:excessive_gap_inexact}}}{\leq} g(\bar{y}; \beta_1) - \frac{\beta_2}{2}\norm{y - \bar{y}^2}^2 + \delta
\overset{\tiny{g(\cdot;\beta_1)-\mathrm{concave}}}{\leq}
g(\hat{y}; \beta_1) + \nabla_yg(\hat{y}; \beta_1)^T(\bar{y} - \hat{y}) - \frac{\beta_2}{2}\norm{y - \bar{y}^2}^2 + \delta \nonumber\\
&& \overset{\eqref{eq:app_deriv_g}}{\leq} {\!\!} g(\hat{y}; \beta_1) \!+\! \widetilde{\nabla}_yg(\hat{y}; \beta_1)^T(\bar{y} \!-\! \hat{y})\!-\!
\frac{\beta_2}{2}\norm{y
\!-\! \bar{y}^2}^2 \!+\! (\bar{y} \!-\! \hat{y})^TA(x^1 \!-\! \tilde{x}^1) \!+\! \delta. \nonumber
\end{eqnarray}
Alternatively, by using \eqref{eq:accuracy_constants}, the second term $[\cdot]_{[2]}$ can be estimated as
\begin{eqnarray}\label{eq:th31_proof3}
[\cdot]_{[2]} && := \phi(\tilde{x}^1) + (A\tilde{x}^1-b)^Ty \nonumber\\
&& = \phi(\tilde{x}^1) + (A\tilde{x}^1 - b)^T\hat{y} + \beta_1p_X(\tilde{x}^1) + (A\tilde{x}^1 - b)^T(y - \hat{y}) - \beta_1p_X(\tilde{x}^1) \nonumber\\
[-1.6ex]\\[-1.6ex]
&&\overset{\tiny\eqref{eq:inexactness_xi}}{\leq}{\!\!\!}
\phi(x^1) \!+\! (Ax^1 \!-\! b)^T\hat{y} \!+\! \beta_1p_X(x^1) \!+\! (A\tilde{x}^1 \!-\! b)^T\!(y \!-\! \hat{y}) \!-\!  \beta_1p_X(\tilde{x}^1) \!+\!
\frac{\beta_1}{2}\varepsilon_{[\sigma]}^2 \nonumber\\
&&\overset{\tiny{\eqref{eq:smoothed_gi}+\mathrm{Lemma}~\ref{le:dual_smoothed_estimates}}}{=}
 g(\hat{y}; \beta_1) \!+\!
\widetilde{\nabla}_yg(\hat{y}; \beta_1)^T(y \!-\! \hat{y})  \!-\! \beta_1p_X(\tilde{x}^1) \!+\! \frac{\beta_1}{2}\varepsilon_{[\sigma]}^2.
\nonumber
\end{eqnarray}
Next,  we consider the point $u := \bar{y} + \tau(y - \bar{y})$ with $\tau\in (0, 1)$. On the one hand, it is easy to see that if $y\in\mathbb{R}^m$ then $u \in
\mathbb{R}^m$. Moreover, we have
\begin{equation}\label{eq:th31_proof4}
\begin{cases}
(1-\tau)(\bar{y} - \hat{y}) + \tau(y - \hat{y}) = \bar{y} + \tau(y - \bar{y}) - \hat{y} = u - \hat{y},\\
u - \hat{y} = u - (1-\tau)\bar{y} - \tau\bar{y}^2 = \tau(y - \bar{y}^2). \\
\end{cases}
\end{equation}
On the other hand, it follows from \eqref{eq:alg_scheme_cond} that
\begin{equation}\label{eq:th31_proof5}
\frac{(1-\tau)}{\tau^2}\beta_2 \geq \frac{L_{\mathrm{A}}}{\beta_1}
\overset{\tiny\mathrm{Lemma}~\ref{le:dual_smoothed_estimates}}{\geq}
L^g(\beta_1), ~i=1,\dots, M.
\end{equation}
By substituting \eqref{eq:th31_proof2} and \eqref{eq:th31_proof3} into \eqref{eq:th31_proof1} and then using \eqref{eq:th31_proof4} and \eqref{eq:th31_proof5},
we conclude that
\begin{eqnarray}\label{eq:th31_proof6}
f(\bar{x}^{+}; \beta_2^{+}) && \leq \max_{y\in\mathrm{R}^m}\left\{ (1-\tau)[\cdot]_{[1]} + \tau [\cdot]_{[2]} \right\}  \nonumber\\
&& \overset{\tiny{\eqref{eq:th31_proof2}+\eqref{eq:th31_proof3}}}{\leq} \max_{y\in\mathbb{R}^m}\Big\{(1-\tau)g(\hat{y};\beta_1) + \tau g(\hat{y};\beta_1)  +
\widetilde{\nabla}_yg(\hat{y};\beta_1)^T\left[(1-\tau)(\bar{y}-\hat{y}) + \tau(y-\hat{y})\right] \nonumber\\
&& - \frac{(1-\tau)\beta_2}{2}\norm{y-\bar{y}^2}^2\Big\} - \tau\beta_1p_X(\tilde{x}^1) + 0.5\tau\beta_1\varepsilon_{[\sigma]}^2 + (1-\tau)\delta +
(1-\tau)(\bar{y}-\hat{y})^TA(x^1-\tilde{x}^1) \nonumber\\
&&{\!\!\!\!\!\!\!\!\!}\overset{\tiny{\eqref{eq:th31_proof4}}}{=}{\!\!\!\!} \left[\max_{u\in\mathbb{R}^m}\left\{ g(\hat{y};\beta_1) +
\widetilde{\nabla}_yg(\hat{y};\beta_1)^T(u - \hat{y}) - \frac{(1-\tau)\beta_2}{2\tau^2}\norm{u-\hat{y}}^2\right\} \right]_{[3]}\nonumber\\
&& + \left[0.5\tau\beta_1\varepsilon_{[\sigma]}^2 + (1-\tau)\delta + (1-\tau)(\bar{y}-\hat{y})^TA(x^1-\tilde{x}^1) -
\tau\beta_1p_X(\tilde{x}^1)\right]_{[4]}.
\end{eqnarray}
Let us consider the first term $[\cdot]_{[3]}$ of \eqref{eq:th31_proof6}. We see that
\begin{eqnarray}\label{eq:th31_proof6a}
[\cdot]_{[3]} &&= \max_{u\in\mathbb{R}^m}\left\{ g(\hat{y};\beta_1) + \widetilde{\nabla}_yg(\hat{y};\beta_1)^T(u - \hat{y}) -
\frac{(1-\tau)\beta_2}{2\tau^2}\norm{u-\hat{y}}^2\right\} \nonumber\\
&&\overset{\tiny{\eqref{eq:th31_proof5}}}{\leq}{\!\!\!\!\!}
 \max_{u\in\mathbb{R}^m}\left\{ g(\hat{y};\beta_1) + \widetilde{\nabla}_yg(\hat{y};\beta_1)^T(u - \hat{y}) -
\frac{L^g(\beta_1)}{2}\norm{u-\hat{y}}^2\right\}\nonumber\\
&&{\!\!\!\!\!\!\!\!\!\!\!\!}\overset{\tiny{\eqref{eq:gradient_mapping_ex}+\eqref{eq:alg_scheme}(\mathrm{line}~3)}}{=}{\!\!\!\!\!\!\!\!\!\!}
g(\hat{y}; \beta_1) + \widetilde{\nabla}_yg(\hat{y};\beta_1)^T(\bar{y}^{+} - \hat{y}) - \frac{L^g(\beta_1)}{2}\norm{\bar{y}^{+} - \hat{y}}^2\nonumber\\
&&\overset{\tiny{\eqref{eq:app_deriv_g_est}}}{=} {\!\!\!\!}
g(\hat{y};\beta_1) \!+\! \nabla_yg(\hat{y};\beta_1)^T\!\! (\bar{y}^{+} \!\!-\! \hat{y}) \!-\! \frac{L^g(\beta_1)}{2}\norm{\bar{y}^{+} \!\!-\! \hat{y}}^2 \!+\!
(\bar{y}^{+}
\!-\! \hat{y})^T\!\!A(\tilde{x}^1 \!-\! x^1) \\
&&\overset{\tiny\eqref{eq:g_Lipschitz_bound}}{\leq}
g(\bar{y}^{+}; \beta_1) + (\bar{y}^{+} - \hat{y})^TA(x^1-\tilde{x}^1)\nonumber\\
&&\overset{\tiny\eqref{eq:g_beta1_Lipschitz_bound}}{\leq}
g(\bar{y}^{+}; \beta_1^{+}) + (\beta_1-\beta_1^{+})p_X(x^{*}(\bar{y}^{+};\beta_1^{+})) + (\bar{y}^{+} - \hat{y})^TA(x^1-\tilde{x}^1)\nonumber\\
&&\overset{\tiny\eqref{eq:beta_update}+\eqref{eq:D_X_p_X}}{\leq}
g(\bar{y}^{+}; \beta_1^{+}) + \left[\tilde{\alpha}\tau\beta_1D_X + (\bar{y}^{+} - \hat{y})^TA(x^1-\tilde{x}^1)\right]_{[5]}\nonumber.
\end{eqnarray}
In order to estimate the term $[\cdot]_{[4]} + [\cdot]_{[5]}$, we can observe that
\begin{align*}
(\bar{y}^{+} - \hat{y}) - (1-\tau)(\hat{y}-\bar{y}) &\overset{\tiny\eqref{eq:alg_scheme}\mathrm{line}~1}{=} L^g(\beta_1)^{-1}(A\tilde{x}^1-b) +
(1-\tau)\tau(\bar{y}^2 - \bar{y})\nonumber\\
& = L^g(\beta_1)^{-1}A(\tilde{x}^1-x^c) + L^g(\beta_1)^{-1}(Ax^c-b) - (1-\tau)\tau\bar{y}\nonumber\\
& + \beta_2^{-1}(1-\tau)\tau A(\bar{x}-x^c) + \beta_2^{-1}(1-\tau)\tau(Ax^c-b)\nonumber,
\end{align*}
which leads to
\begin{align}\label{eq:th31_proof6c}
A^T\left[(\bar{y}^{+} \!-\! \hat{y}) \!-\! (1 \!-\! \tau)(\hat{y} \!-\! \bar{y})\right] & \leq
L_{\mathrm{A}}^{-1}\beta_1\norm{A}^2\norm{\tilde{x}^1-x^c} + L_{\mathrm{A}}^{-1}\beta_1\norm{A^T(Ax^c-b)} +
\frac{(1-\tau)\tau}{\beta_2}\norm{A}^2\norm{\bar{x}-x^c} \nonumber\\
& + \beta_2^{-1}(1-\tau)\tau\norm{A^T(Ax^c-b)} + (1-\tau)\tau\norm{A}\norm{\bar{y}} .
\end{align}
Note that similar to \eqref{eq:lm31_proof4b}, we have $\norm{\tilde{x}^1 - x^c} \leq D_{\sigma}$ and $\norm{\bar{x} - x^c} \leq D_{\sigma}$. By
substituting these estimates into \eqref{eq:th31_proof6c} and using the definitions of $[\cdot]_{[4]}$ and $[\cdot]_{[5]}$ we have
\begin{equation}\label{eq:th31_proof6d}
[\cdot]_{[4]} + [\cdot]_{[5]} \leq (1-\tau)\delta + \frac{\tau\beta_1}{2}\varepsilon_{[\sigma]}^2 + \tau\beta_1(\tilde{\alpha} D_X -
p_X(\tilde{x}^1)) + \left[\frac{\beta_1}{L_{\mathrm{A}}}C_d + (1-\tau)\tau\left(\frac{C_d}{\beta_2} + \norm{A}\norm{\bar{y}}\right)\right]\varepsilon_{[1]}.
\end{equation}
By combining \eqref{eq:th31_proof6}, \eqref{eq:th31_proof6a} and \eqref{eq:th31_proof6d} and noting that $\tilde{\alpha} D_X - p_X(\tilde{x}^1)\leq 0$, we
obtain
\begin{eqnarray}\label{eq:th31_proof7}
f(\bar{x}^{+};\beta_2^{+}) && \leq g(\bar{y}^{+};\beta_1^{+}) + \tau\beta_1(\tilde{\alpha} D_X - p_X(\tilde{x}^1)) + (1-\tau)\delta + \eta(\tau,
\beta_1,\beta_2,
\bar{y}, \varepsilon)\nonumber\\
&&\leq g(\bar{y}^{+};\beta_1^{+}) + (1-\tau)\delta + \eta(\tau, \beta_1,\beta_2, \bar{y},
\varepsilon)\nonumber\\
&&= g(\bar{y}^{+};\beta_1^{+}) + \delta_{+}\nonumber,
\end{eqnarray}
which is indeed  inequality \eqref{eq:excessive_gap_inexact} w.r.t. $\beta_1^{+}$, $\beta_2^{+}$ and $\delta_{+}$.
\Eproof

\vskip0.2cm
\noindent\textit{A.2. The proof of Theorem \ref{th:alg_scheme2}.}
Let us denote by $y^2_{+} = y^{*}(\hat{x};\beta_2^{+})$, $x^1 := x^{*}(\bar{y}; \beta_1)$, $\tilde{x}^1 := \tilde{x}^{*}(\bar{y};\beta_1)$,  $\bar{x}^{*+} :=
P(\hat{x}; \beta_2^{+})$  and $\norm{x-x^1}_{\sigma}^2 := \sum_{i=1}^M\sigma_i\norm{x_i-x_i^1}^2$.
From the definition of $g(\cdot; \beta_1)$, the second line of \eqref{eq:alg_scheme_cond_p} and $\beta_1^{+} = (1-\tau)\beta_1$ we have
\begin{align}\label{eq:thm41_proof1}
g(\bar{y}^{+}; \beta_1^{+}) &= \min_{x\in X}\left\{\phi(x) + (\bar{y}^{+})^T(Ax-b) + \beta_1^{+}p_X(x) \right\}
\nonumber\\
&\overset{\tiny{\mathrm{line~2}\eqref{eq:alg_scheme_cond_p}}}{=}\min_{x\in X}\Big\{(1-\tau)\left[\phi(x) + \bar{y}^T(Ax-b) +
\beta_1p_X(x) \right]_{[1]}  + \tau \left[ \phi(x) + (y^2_{+})^T(Ax-b)\right]_{[2]}\Big\}.
\end{align}
First, we estimate the term $[\cdot]_{[1]}$ in \eqref{eq:thm41_proof1}. Since each component of the function in $[\cdot]_{[1]}$ is strongly convex w.r.t. $x_i$
with a convexity parameter $\beta_1\sigma_i > 0$ for $i=1,\dots, M$, by using the optimality condition, one can show that
\begin{eqnarray}\label{eq:thm41_proof2}
[\cdot]_{[1]}  &&\overset{\tiny{\eqref{eq:smooth_dual_sol}}}{\geq}  \min_{x\in X}\left\{\phi(x) + \bar{y}^T(Ax-b) + \beta_1p_X(x)\right\} +
\frac{\beta_1}{2}\norm{x-x^1}_{\sigma}^2 \overset{\tiny\eqref{eq:smoothed_gi}}{=} g(\bar{y}; \beta_1) + \frac{\beta_1}{2}\norm{x - x^1}_{\sigma}^2 \nonumber\\
[-1.8ex]\\[-1.8ex]
&&\overset{\tiny{\eqref{eq:excessive_gap_inexact}}}{\geq} f(\bar{x}; \beta_2) + \frac{\beta_1}{2}\norm{x-x^1}_{\sigma}^2 - \delta
\nonumber.
\end{eqnarray}
Moreover, since $\psi(\bar{x};\beta_2) = \frac{1}{2\beta_2}\norm{A\bar{x}-b}^2 = \frac{(1-\tau)}{2\beta_2^{+}}\norm{A\bar{x}-b}^2 =
(1-\tau)\psi(\bar{x};\beta_2^{+})$, by substituting this relation into \eqref{eq:thm41_proof2b} we obtain
\begin{eqnarray}\label{eq:thm41_proof2b}
[\cdot]_{[1]} &&\geq \phi(\bar{x}) + \psi(\bar{x}; \beta_2) + \frac{\beta_1}{2}\norm{x-x^1}_{\sigma}^2  - \delta \nonumber\\
&& =  \phi(\bar{x}) + \psi(\bar{x}; \beta_2^{+}) - \tau\psi(\bar{x};\beta_2^{+}) + \frac{\beta_1}{2}\norm{x-x^1}_{\sigma}^2 - \delta \\
&&{\!\!\!\!\!}\overset{\tiny\mathrm{def.}~\psi}{\geq}
\phi(\bar{x}) \!+\! \psi(x^2; \beta_2^{+}) \!+\! \nabla_x{\psi}(x^2; \beta_2^{+})^T(\bar{x} \!-\! x^2) \!+\!
\frac{\beta_1}{2}\norm{x \!-\! x^1}_{\sigma}^2 \!-\! \delta + \frac{1}{2\beta_2^{+}}\norm{A(\bar{x}-x^2)}^2 - \tau\psi(\bar{x};\beta_2^{+}). \nonumber
\end{eqnarray}
Here, the last inequality follows from the fact that $\psi(\bar{x};\beta_2^{+}) = \frac{1}{2\beta_2^{+}}\norm{A\bar{x} - b}^2$.

Next, we consider the term $[\cdot]_{[2]}$ of \eqref{eq:thm41_proof1}. We note that $y^2_{+} = \frac{1}{\beta_2^{+}}(Ax^2 - b)$. Hence,
\begin{eqnarray}\label{eq:thm41_proof3}
[\cdot]_{[2]} &&= \phi(x) + (y^2_{+})^TA(x - x^2) + (Ax^2 - b)^Ty^2_{+} \nonumber\\
&&\overset{\tiny\mathrm{Lemma}~\ref{le:primal_smoothed_estimates}}{=}
\phi(x) + \nabla_x{\psi}(x^2; \beta_2^{+})^T(x - x^2) + \frac{1}{\beta_2^{+}}\norm{Ax^2-b}^2\\
&& = \phi(x) + \psi(x^2;\beta_2^{+}) + \nabla_x{\psi}(x^2;\beta_2^{+})^T(x - x^2) + \psi(x^2; \beta_2^{+}).\nonumber
\end{eqnarray}
From the definitions of  $\norm{\cdot}_{\sigma}$, $D_{\sigma}$ and $\varepsilon_{[\sigma]}$ we have  $\norm{x-x^c}_{\sigma} \leq D_{\sigma}$,
$\norm{\tilde{x}^1-x^c}_{\sigma} \leq D_{\sigma}$ and $\norm{x^1-\tilde{x}^1}_{\sigma}\leq \varepsilon_{[\sigma]}$.
Moreover, $\norm{x-x^1}_{\sigma} \geq \abs{\norm{x-\tilde{x}^1}_{\sigma} - \norm{x^1 - \tilde{x}^1}_{\sigma}}$. By using these estimates, we can derive
\begin{eqnarray}\label{eq:thm41_proof3b}
\norm{x-x^1}_{\sigma}^2 &&\geq \left[ \norm{x-\tilde{x}^1}_{\sigma} - \norm{x^1 - \tilde{x}^1}_{\sigma}\right]^2 \nonumber\\
&&=\norm{x-\tilde{x}^1}_{\sigma}^2 - 2\norm{x-\tilde{x}^1}_{\sigma}\norm{x^1-\tilde{x}^1}_{\sigma} + \norm{x^1-\tilde{x}^1}_{\sigma}^2 \nonumber\\
[-1.5ex]\\[-1.5ex]
&&\geq \norm{x-\tilde{x}^1}_{\sigma}^2 - 2\norm{x^1-\tilde{x}^1}_{\sigma}\left[\norm{x - x^c}_{\sigma} + \norm{\tilde{x}^1 - x^c}_{\sigma}\right]\nonumber\\
&&\geq \norm{x-\tilde{x}^1}_{\sigma}^2 - 4D_{\sigma}\varepsilon_{[\sigma]}.\nonumber
\end{eqnarray}
Furthermore, the condition \eqref{eq:alg_scheme_cond_p} can be expressed as
\begin{equation}\label{eq:alg_scheme_cond_p_tmp}
\frac{(1-\tau)}{\tau^2}\beta_1\sigma_i \geq \frac{M\norm{A_i}^2}{(1-\tau)\beta_2} = L_i^{\psi}(\beta_2^{+}), ~i=1,\dots, M.
\end{equation}
By substituting \eqref{eq:thm41_proof2b}, \eqref{eq:thm41_proof3} and \eqref{eq:thm41_proof3b} into \eqref{eq:thm41_proof1} and then using
\eqref{eq:alg_scheme_cond_p_tmp} and note that  $\tau(x - \tilde{x}^1) = (1-\tau)\bar{x} + \tau x - x^2$, we obtain
\begin{eqnarray}\label{eq:thm41_proof4}
g(\bar{y}^{+};\beta_1^{+}) &&= \min_{x\in X}\left\{(1-\tau)[\cdot]_{[1]} + \tau [\cdot]_{[2]}\right\}\nonumber\\
&&{\!\!\!\!}\overset{\tiny{\eqref{eq:thm41_proof2}+\eqref{eq:thm41_proof3}}}{\geq}{\!\!\!\!} \min_{x\in X}\Big\{
(1 \!-\! \tau)\phi(\bar{x}) \!+\! \tau\phi(x) \!+\! \nabla{\psi}(x^2;\beta_2^{+})^T\left[(1 \!-\! \tau)(\bar{x} \!-\! x^2) \!+\! \tau(x \!-\!
x^2)\right] + \frac{(1-\tau)\beta_1}{2}\norm{x-x^1}^2_{\sigma}\Big\} \nonumber\\
&&  - (1-\tau)\delta  + \left[\tau\psi(x^2;\beta_2^{+}) - (1-\tau)\tau\psi(\bar{x};\beta_2^{+}) +
\frac{(1-\tau)}{2\beta_2^{+}}\norm{A(\bar{x}-x^2)}^2\right]_{[3]}.
\end{eqnarray}
We further estimate \eqref{eq:thm41_proof4} as follows
\begin{align}\label{eq:thm41_proof4b}
g(\bar{y}^{+};\beta_1^{+})& \overset{\tiny{\phi-\mathrm{convex}}}{\geq} \min_{x\in X}\Big\{\phi((1 \!-\! \tau)\bar{x} \!+\! \tau x) \!+\!
\nabla{\psi}(x^2;\beta_2^{+})((1 \!-\! \tau)\bar{x} \!+\! \tau x \!-\! x^2)\nonumber\\
& + \frac{(1-\tau)\beta_1}{2}\norm{x-\tilde{x}^1}^2_{\sigma}\Big\} + [\cdot]_{[3]} - (1-\tau)\delta -  2(1-\tau)\beta_1D_{\sigma}\varepsilon_{[\sigma]}
\nonumber\\
& \overset{\tiny\eqref{eq:thm41_proof3b}}{\geq} {\!\!\!\!\!\!\!} \min_{u := (1 \!-\! \tau)\bar{x} \!+\! \tau x \in X}{\!\!\!}\Big\{\phi(u) \!+\!
\psi(x^2;\beta_2^{+}) \!+\! \nabla{\psi}(x^2;\beta_2^{+})(u \!-\! x^2) \!+\! \frac{(1 \!-\! \tau)\beta_1}{2\tau^2}\norm{u \!-\! x^2}^2_{\sigma}\Big\}
\nonumber\\
& - 2(1-\tau)\beta_1D_{\sigma}\varepsilon_{[\sigma]}  - (1-\tau)\delta + [\cdot]_{[3]}\nonumber\\
&{\!\!\!\!\!\!\!\!\!}\overset{\tiny\eqref{eq:alg_scheme_cond_p_tmp}}{\geq}\!\!\! \min_{u \in X}\!\Big\{\! \phi(u) \!+\! \psi(\hat{x};\beta_2^{+}) \!+\!
\nabla{\psi}(x^2;\beta_2^{+})(u \!-\! x^2) \!+\! \frac{L^{\psi}(\beta_2^{+})}{2}\norm{u \!-\! x^2}^2_{\sigma}\Big\} \!-\! 2(1 \!-\!
\tau)\beta_1D_{\sigma}\varepsilon_{[\sigma]}  \!-\! (1 \!-\! \tau)\delta \!+\! [\cdot]_{[3]}\nonumber\\
& =q(\bar{x}^{*+},y^2_{+};\beta_2^{+}) - 2(1-\tau)\beta_1D_{\sigma}\varepsilon_{[\sigma]}  - (1-\tau)\delta + [\cdot]_{[3]}\nonumber\\
& \overset{\tiny\eqref{eq:Pi_approx}}{\geq} q(\bar{x}^{+}, y^2_{+};\beta_2^{+}) - 2(1-\tau)\beta_1D_{\sigma}\varepsilon_{[\sigma]}  - (1-\tau)\delta +
[\cdot]_{[3]} - 0.5\varepsilon_{A}^2\nonumber\\
&\overset{\tiny{\eqref{eq:psi_Lipschitz_bound}}}{\geq} f(\bar{x}^{+}; \beta_2^{+}) - 2(1-\tau)\beta_1D_{\sigma}\varepsilon_{[\sigma]}  -
(1-\tau)\delta + [\cdot]_{[3]} - 0.5\varepsilon_{A}^2,
\end{align}
where $\varepsilon_A := [\sum_{i=1}^ML_i^{\psi}(\beta_2^{+})\varepsilon_i^2]^{1/2}$.

To complete the proof, we estimate $[\cdot]_{[3]}$ as follows
\begin{eqnarray}\label{eq:thm41_proof5}
[\cdot]_{[3]} && = \tau\psi(x^2;\beta_2^{+}) - \tau(1-\tau)\psi(\bar{x};\beta_2^{+}) + \frac{(1-\tau)}{2\beta_2^{+}}\norm{A(\bar{x}-x^2}^2 \nonumber\\
&& = \frac{1}{2\beta_2^{+}}\left[\tau\norm{Ax^2 - b}^2 - \tau(1-\tau)\norm{A\bar{x}-b}^2 + (1-\tau)\norm{A(\bar{x}-x^2)}^2\right]\\
&& = \frac{1}{2\beta_2^{+}}\norm{(Ax^2-b) + (1-\tau)(A\bar{x}-b)}^2  \geq 0.\nonumber
\end{eqnarray}
By substituting \eqref{eq:thm41_proof5} into \eqref{eq:thm41_proof4b} and using the definition of $\delta_{+}$ in \eqref{eq:xi_func} we obtain
\begin{equation*}
g(\bar{y}^{+};\beta_1^{+}) \geq f(\bar{x}^{+};\beta_2^{+}) - \delta_{+},
\end{equation*}
where
$\delta_{+} := (1-\tau)\delta + 2\beta_1(1-\tau)D_{\sigma}\varepsilon_{[\sigma]} +
0.5\sum_{i=1}^ML_i^{\psi}(\beta_2^{+})\varepsilon_i^2 = (1-\tau)\delta + \xi(\tau, \beta_1, \beta_2, \varepsilon)$.
This is indeed \eqref{eq:excessive_gap_inexact} with the inexactness $\delta_{+}$.
\Eproof

\vskip0.2cm
\noindent\textit{A.3. The proof of Lemma \ref{le:init_point}. }
For simplicity of notation,  we denote by $x^{*} := x^{*}(0^m; \beta_1)$ and $\tilde{x}^{*} := \tilde{x}^{*}(0^m;\beta_1)$, $h(\cdot;y, \beta_1) :=
\sum_{i=1}^Mh_i(\cdot; y, \beta_1)$, where $h_i$ is defined in Definition \ref{de:inexactness}. By using the inexactness in inequality \eqref{eq:inexactness_xi}
and $y^c = 0^m$, we have $h(\tilde{x}^{*}; y, \beta_1) \leq h(x^{*}; y, \beta_1) + \frac{1}{2}\beta_1\varepsilon^2_{[\sigma]}$ which is rewritten as
\begin{eqnarray}\label{eq:lm31_proof1}
\phi(\tilde{x}^{*}) + \beta_1p_X(\tilde{x}^{*}) &&\leq \phi(x^{*}) +  \beta_1p_X(x^{*}) \overset{\tiny{\eqref{eq:smoothed_gi}}}{=} g(0^m; \beta_1) +
\frac{\beta_1}{2}\varepsilon_{[\sigma]}^2.
\end{eqnarray}
Since $g(\cdot;\beta_1)$ is concave, by using the estimate \eqref{eq:g_Lipschitz_bound} and $\nabla_yg(0^m;\beta_1) = Ax^{*} - b$ we have
\begin{align}\label{eq:lm31_proof2}
g(\bar{y}^0;\beta_1) &\geq g(0^m;\beta_1) + \nabla_yg(0^m;\beta_1)^T\bar{y}^0 - \frac{L^g(\beta_1)}{2}\norm{\bar{y}^0}^2 \nonumber\\
&= g(0^m;\beta_1) + (Ax^{*} - b)^T\bar{y}^0 - \frac{L^g(\beta_1)}{2}\norm{\bar{y}^0}^2 \nonumber\\
&\overset{\tiny\eqref{eq:lm31_proof1}}{\geq}
\phi(\tilde{x}^{*}) + \beta_1p_X(\tilde{x}^{*}) + (Ax^{*} - b)^T\bar{y}^0 - \frac{L^g(\beta_1)}{2}\norm{\bar{y}^0}^2 -
\frac{\beta_1}{2}\varepsilon_{[\sigma]}^2\\
&= \phi(\tilde{x}^{*}) + (A\tilde{x}^{*} - b)^T\bar{y}^0 - \frac{L^g(\beta_1)}{2}\norm{\bar{y}^0}^2 + (\bar{y}^0)^TA(x^{*} -\tilde{x}^{*}) +
\beta_1p_X(\tilde{x}^{*}) - \frac{\beta_1}{2}\varepsilon_{[\sigma]}^2 := T_0 \nonumber.
\end{align}
Since $\norm{x^{*} - \tilde{x}^{*}} \leq \varepsilon_{[1]}$, $p_X(\tilde{x}^{*}) \geq p_X^{*} > 0$ and $\bar{y}^0$ is the solution of \eqref{eq:psi_fun},
we estimate the last term $T_0$ of \eqref{eq:lm31_proof2} as
\begin{eqnarray}\label{eq:lm31_proof4}
T_0 ~&&~ \geq \phi(\tilde{x}^{*}) + \max_{y\in\mathbb{R}^m}\left\{(A\tilde{x}^{*} - b)^Ty - \frac{L^g(\beta_1)}{2}\norm{y}^2\right\} -
\norm{A^T\bar{y}^0}\norm{x^{*} - \tilde{x}^{*}} - \frac{\beta_1}{2}\varepsilon_{[\sigma]}^2 \nonumber\\
~&&~ \overset{\tiny\eqref{eq:init_point_cond}+\eqref{eq:accuracy_constants}}{\geq}
\phi(\tilde{x}^{*}) + \max_{y\in\mathbb{R}^m}\left\{(A\tilde{x}^{*} - b)^Ty - \frac{\beta_2}{2}\norm{y}^2\right\}  -\norm{A^T\bar{y}^0}\varepsilon_{[1]} -
\frac{\beta_1}{2}\varepsilon_{[\sigma]}^2 \\
~&&~ \overset{\tiny\eqref{eq:psi_approx}}{\geq}
f(\bar{x}^0; \beta_2) - \left[\norm{A^T\bar{y}^0}\varepsilon_{[1]} + \frac{\beta_1}{2}\varepsilon_{[\sigma]}^2\right].\nonumber
\end{eqnarray}
Now, we see that $p_X^{*} + \frac{\sigma_i}{2}\norm{\bar{x}_i^0 - x^c_i}^2 \leq p_{X_i}(\bar{x}_i^0) \leq \displaystyle\sup_{x_i\in X_i}p_{X_i}(x_i) = D_{X_i}$.
Thus, $\norm{\bar{x}_i^0 - x^c_i}^2\leq \frac{2}{\sigma_i}(D_{X_i}-p_{X_i}^{*}) \leq \frac{2D_{X_i}}{\sigma_i}$ for all $i=1,\dots, M$. By using the definition
of $D_{\sigma}$ in \eqref{eq:accuracy_constants}, the last inequalities imply
\begin{equation}\label{eq:lm31_proof4b}
\norm{\bar{x}^0 - x^c} \leq D_{\sigma}.
\end{equation}
Finally, we note that $A^T\bar{y}^0 = \frac{1}{L^g(\beta_1)}A^T(A\bar{x}^0 - b)$ due to \eqref{eq:start_point}. This relation leads to
\begin{eqnarray}\label{eq:lm31_proof5}
\norm{A^T\bar{y}^0} && = L^g(\beta_1)^{-1}\norm{A^T(A\bar{x}^0-b)} = L^g(\beta_1)^{-1}\norm{A^T(A(\bar{x}^0-x^c) + Ax^c - b)}\nonumber\\
&&\leq L^g(\beta_1)^{-1}\left[ \norm{A^TA}\norm{\bar{x}^0 - x^c} + \norm{A^T(Ax^c - b)}\right] \overset{\tiny\eqref{eq:lm31_proof4b}}{\leq}
L_{\mathrm{A}}^{-1}\beta_1\left[\norm{A}^2D_{\sigma} + \norm{A^T(Ax^c -
b)}\right] \\
&& \overset{\tiny\eqref{eq:accuracy_constants}}{=} L_{\mathrm{A}}^{-1}\beta_1C_d.\nonumber
\end{eqnarray}
By substituting \eqref{eq:lm31_proof4b} and \eqref{eq:lm31_proof5} into \eqref{eq:lm31_proof4} and then using the definition of
$\delta_0$ we obtain the conclusion of the lemma.
\Eproof

\vskip0.2cm
\noindent\textit{A.4. The proof of Lemma \ref{le:choice_of_tau}. }
Let us consider the function $\xi(t;\alpha) := \frac{2}{\sqrt{t^3/(t-2\alpha)+1} + 1}$, where $\alpha \in [0, 1]$ and $t\geq 2$. After few simple
calculations, we can estimate $t + \alpha \leq \sqrt{t^3/(t-2\alpha) + 1} \leq t + 1$ for all $t > 2\max\{1, \alpha/(1-\alpha)\}$. These estimates lead to
\begin{equation*}
2(t+2)^{-1} \leq \xi(t;\alpha) \leq 2(t+1+\alpha)^{-1}~, \forall t > 2\max\{1, \alpha/(1-\alpha)\}.
\end{equation*}
From the update rule \eqref{eq:step_size_update} we can show that the sequence $\{\tau_k\}_{k\geq 0}$ satisfies $\tau_{k+1} := \xi(2/\tau_k;\alpha_k)$. If we
define $t_k := \frac{2}{\tau_k}$, then $\frac{2}{t_{k+1}} = \xi(\tau_k;\alpha_k)$. Therefore, one can estimate $t_k + 1 + \alpha_k \leq t_{k+1} \leq t_k + 2$
for $t_k > 2\max\{1, \alpha_k/(1-\alpha_k)\}$. Note that $\alpha_k\geq\alpha^{*}$ by Assumption \aref{as:A2} and by induction  we can show that $t_0 +
(1+\alpha^{*})k \leq t_k \leq t_0 + 2k$ for $k\geq 0$ and $t_0 > 2\max\{1, \alpha^{*}/(1-\alpha^{*})\}$. However, since $t_k = \frac{2}{\tau_k}$, this leads to:
\begin{equation*}
\frac{1}{k + 1/\tau_0} = \frac{1}{k+t_0/2} \leq \tau_k \leq \frac{1}{0.5(1+\alpha^{*})k + t_0/2} = \frac{1}{0.5(1+\alpha^{*})k + 1/\tau_0},
\end{equation*}
which is indeed \eqref{eq:tau_bounds}.
Here, $0 < \tau_0 = 2/t_0$ and $\tau_0 < [\max\{1, \alpha^{*}/(1-\alpha^{*})\}]^{-1}$.

In order to prove \eqref{eq:beta_bounds}, we note that $(1-\alpha_k\tau_k)(1-\tau_{k+1}) = \frac{\tau_{k+1}^2}{\tau_k^2}$. By induction, we have
$\prod_{i=0}^k(1-\alpha_k\tau_{k-1})\prod_{i=0}^k(1-\tau_k) = \frac{(1-\tau_0)\tau_k^2}{\tau_0^2}$. By combining this relation and the update rule
\eqref{eq:beta_update}, we  deduce that $\beta_1^k\beta_2^{k+1} = \beta_1^0\beta_2^0\frac{(1-\tau_0)\tau_k^2}{\tau_0^2}$, which is the third statement of
\eqref{eq:beta_bounds}.

Next, we prove the  bound on $\beta_1^k$. Since $\beta_1^{k+1} = \beta_1^0\prod_{i=0}^k(1-\alpha_i\tau_i)$, we have $\beta_1^0\prod_{i=0}^k(1-\tau_i) \leq
\beta_1^{k+1} \leq \beta_1^0\prod_{i=0}^k(1-\alpha^{*}\tau_i)$. By using the following elementary inequalities $-t-t^2 \leq \ln(1-t)\leq -t$ for all $t\in [0,
1/2]$, we obtain $\beta_1^0e^{-S_1 - S_2} \leq \beta_1^{k+1} \leq \beta_1^0e^{-\alpha^{*} S_1}$, where $S_1 := \sum_{i=0}^k\tau_i$ and $S_2 :=
\sum_{i=0}^k\tau_i^2$. From \eqref{eq:tau_bounds}, on the one hand, we have
\begin{equation*}
\sum_{i=0}^k\frac{1}{I+1/\tau_0} \leq S_1 \leq
\sum_{i=0}^k\frac{1}{0.5(1+\alpha^{*})i + 1/\tau_0},
\end{equation*}
which leads to $\ln(k+1/\tau_0)+\ln\tau_0 \leq S_1 \leq \frac{1}{0.5(1+\alpha^{*})}\ln(k+1/\tau_0) +
\gamma_0$ for some constant $\gamma_0$. On the other hand, we have $S_2$ converging to some constant $\gamma_2 > 0$. Combining all estimates together we
eventually get $\frac{\gamma}{(\tau_0k+1)^{2/(1+\alpha^{*})}} \leq \beta_1^{k+1} \leq \frac{\beta_1^0}{(\tau_0k+1)^{\alpha^{*}}}$ for some positive constant
$\gamma$.

Finally, we estimate the bound on $\beta_2^k$. Indeed, it follows from \eqref{eq:tau_bounds} that $\beta_2^{k+1} = \beta_2^0\prod_{i=0}^k(1-\tau_k) \leq
\beta_2^0\prod_{i=0}^k(1-\frac{1}{k+1/\tau_0}) = \beta_2^0\frac{1/\tau_0-1}{k+1/\tau_0} = \frac{\beta_2^0(1-\tau_0)}{\tau_0k + 1}$.
\Eproof

\bibliographystyle{plain}


\end{document}